\newtheorem{theorem}{Theorem}[section]
\newtheorem{proposition}[theorem]{Proposition}
\newtheorem{proposition/definition}[theorem]{Proposition/Definition}
\newtheorem{lemma}[theorem]{Lemma}
\newtheorem{corollary}[theorem]{Corollary}
\theoremstyle{definition}
\newtheorem{definition}[theorem]{Definition}
\newtheorem{definition/proposition}[theorem]{Definition/Proposition}
\newtheorem{example}[theorem]{Example}
\newtheorem{construction}[theorem]{Construction}
\theoremstyle{remark}
\DeclareMathOperator{\id}{id}
\DeclareMathOperator{\diag}{diag}
\DeclareMathOperator{\Hank}{Hank}
\DeclareMathOperator{\Circ}{Circ}
\DeclareMathOperator{\Toep}{Toep}
\DeclareMathOperator{\proj}{proj}
\DeclareMathOperator{\bttb}{\textsc{bttb}}
\DeclareMathOperator{\rank}{rank}
\DeclareMathOperator{\brank}{\overline{\rank}}
\definecolor{darkblue}{rgb}{0,0,0.7}
\newcommand{\change}[1]{#1}
\title{Fast structured matrix computations: tensor rank and Cohn--Umans method}
\author[K.~Ye]{Ke~Ye}
\address{Department of Mathematics and Department of Statistics, University of Chicago, Chicago, IL 60637-1514.}
\email{kye@math.uchicago.edu}
\author[L.-H.~Lim]{Lek-Heng~Lim}
\address{Computational and Applied Mathematics Initiative, Department of Statistics,
University of Chicago, Chicago, IL 60637-1514.}
\email[corresponding author]{lekheng@galton.uchicago.edu}
\begin{document}

\begin{abstract}
We discuss a generalization of the Cohn--Umans method, a potent technique developed for studying the bilinear complexity of matrix multiplication by embedding matrices into an appropriate group algebra. We investigate how the Cohn--Umans method  may be used for bilinear operations other than matrix multiplication, with algebras other than group algebras, and we relate it to Strassen's tensor rank approach, the traditional framework for investigating bilinear complexity. To demonstrate the utility of the generalized method, we apply it to find the fastest algorithms for forming structured matrix-vector product, the basic operation underlying iterative algorithms for structured matrices. The structures we study include Toeplitz, Hankel, circulant, symmetric, skew-symmetric,  $f$-circulant, block-Toeplitz-Toeplitz-block, triangular Toeplitz matrices, Toeplitz-plus-Hankel, sparse/banded/triangular. Except for the case of skew-symmetric matrices, for which we have only upper bounds, the algorithms derived using the generalized Cohn--Umans method in all other instances are the fastest possible in the sense of having minimum bilinear complexity. We also apply this framework to a few other bilinear operations including matrix-matrix, commutator, simultaneous matrix products, \change{and briefly discuss the relation between tensor nuclear norm and numerical stability.}
\end{abstract}

\maketitle

\section{Introduction}

In this article, we systematically study the design of fast, possibly fastest, algorithms for a variety of operations involving structured matrices, as measured by the \textit{bilinear complexity} of the problem. \change{Roughly speaking, the bilinear complexity of an algorithm for a problem that can be cast as the evaluation of a bilinear map is the number of multiplications required in the algorithm; the bilinear complexity of the problem is then that of an algorithm with the lowest bilinear complexity \cite[Chapter~14]{BCS}.} This notion of complexity is best known for its use in quantifying the speed of matrix-matrix product and matrix inversion in the work of Strassen \cite{Strass0}, Coppersmith--Winograd \cite{CW}, Vassilevska~Williams \cite{Williams}, and many others. The current record,  due to Le Gall \cite{LeGall}, for the asymptotic bilinear complexity of $n \times n$ matrix-matrix product for unstructured matrices is $O(n^{2.3728639})$. \change{Roughly speaking, the asymptotic bilinear complexity of a problem dependent on $n$ refers to its bilinear complexity when $n$ is sufficiently large.}

The algorithms that we study in article will be for the following operations: (1) matrix-vector product,  (2) matrix-matrix product, and (3) commutator product:
\[
(A, x) \mapsto Ax, \qquad (A,B) \mapsto AB, \qquad (A,B) \mapsto AB - BA,
\]
where $A$ and $B$ are structured matrices and $x$ is a vector, of appropriate dimensions so that the products are defined.

The structured matrices studied in this article include: (i) sparse (including banded and triangular), (ii) symmetric, (iii) skew-symmetric, (iv) Toeplitz, (v) Hankel, (vi) circulant, (vii) $f$-circulant and skew-circulant, (viii) block-Toeplitz-Toeplitz-block (\textsc{bttb}) and more generally any block structured matrices with structured blocks, (ix) triangular Toeplitz and its analogues for Hankel and circulant matrices, (x) sum of Toeplitz and Hankel. We provide algorithms of optimal bilinear complexity for all except the skew-symmetric case (for which we only have upper bounds). The optimal bilinear complexity for the Toeplitz and triangular Toeplitz matrix-vector product are well-known, due to Bini and Capovani \cite{Bini/Capovani}, but we will obtain them using a different method (generalized Cohn--Umans) that applies more generally to all classes of structured matrices discussed here.

We will examine two different approaches: the Strassen tensor rank approach \cite{Strass1,Strass2}, and the  Cohn--Umans group theoretic approach \cite{CU1, CKSU, CU2}, as well as the relations between them. Our study gives a generalization of the Cohn--Umans approach in two regards: a generalization from matrix-matrix product to arbitrary bilinear operations, and a generalization from (a) group algebras (e.g., Section~\ref{sec:simul}) to arbitrary algebras including (b) cohomology rings of manifolds (e.g., Section~\ref{sec:triangular}), (c) coordinate rings of schemes (e.g., Section~\ref{sec:triangular}) and varieties (e.g., Section~\ref{sec:bttb}), (d) polynomial identity rings (e.g., Section~\ref{sec:comm}). We will provide the equivalent of their `triple product property' in these more general contexts. \change{The idea of considering algebras other than group algebras was already in \cite{CU2}, where the authors proposed to use  adjacency algebras of coherent configurations. These may be viewed as a generalization of group algebras and are in particular semisimple, i.e., isomorphic to an algebra of block diagonal matrices. Our generalization goes further in that the algebras we use may contain nilpotents and thus cannot be semisimple (e.g., Section~\ref{sec:triangular}); in fact they may not be associative algebras (e.g., Section~\ref{sec:comm}), may not be algebras (e.g., Section~\ref{sec:skew-symmetric}), and may not even be vector spaces (e.g., Example~\ref{eg:intmult}).}

We hope to convince our readers, by way of a series of constructions involving various structured matrices and various bilinear operations, that this generalization of Cohn--Umans method could allow one to systematically uncover fast algorithms, and these could in turn be shown to be the fastest possible  (in terms of bilinear complexity) via arguments based on the Strassen tensor rank approach. For instance, we will see in Section~\ref{sec:symm} that the fastest possible algorithm for multiplying a symmetric matrix to a vector involves first writing the symmetric matrix as a sum of Hankel matrices of decreasing dimensions bordered by zeros. For example, a $4 \times 4$ symmetric matrix would have to be decomposed into
\[
\begin{bmatrix}
a & b & c & d\\
b & e & f & g\\
c & f & h & i \\
d & g & i & j
\end{bmatrix}
=
\begin{bmatrix}
a & b & c & d\\
b & c & d & g\\
c & d & g & i \\
d & g & i & j
\end{bmatrix}
+
\begin{bmatrix}
0 & 0 & 0 & 0\\
0 & e - c & f -d & 0\\
0 & f  -d & e -c & 0 \\
0 & 0 & 0 & 0
\end{bmatrix}
+
\begin{bmatrix}
0 & 0 & 0 & 0\\
0 & 0 & 0 & 0\\
0 & 0 & h -g -e + c & 0 \\
0 & 0 & 0 & 0
\end{bmatrix}.
\]
This is highly nonobvious to us. We would not have been able to find this algorithm without employing the generalized Cohn--Umans approach.

The main focus of our article will be the \textit{matrix-vector product} for various structured matrices since these form the fundamental building blocks of most modern iterative algorithms for problems involving structured matrices: linear systems \cite{Chan, Ng}, least-squares problems \cite{Bjorck,Ng}, eigenvalue problems \cite{Watkins}, evaluating analytic functions with matrix arguments \cite{Higham}, etc. On the other hand, problems requiring matrix-matrix and product of structured matrices are relatively uncommon; one reason being that the most common structured matrices (symmetric, Toeplitz, Hankel, etc; in fact all but circulant) are not closed under matrix-matrix products. \change{Explicit pseudocodes for all structured matrix-vector product algorithms appearing in this article may be found in \cite{KL2}.}

\subsection{Why minimize multiplications?}

In modern computer processors, there is no noticeable difference in the latency of addition and multiplication \cite[Tables~14-1 and 15-6]{Intel}. So the reader might wonder why bilinear complexity continues to be of relevance. We provide three reasons below.

The first reason is that such algorithms apply when we have matrices in place of scalars. We illustrate this with a simple example, Gauss's method for multiplying two complex numbers \cite[Section~4.6.4]{Knuth}. Let $a,b,c,d \in \mathbb{R}$. Then the usual method
\[
(a + ib)(c+ id) = (ac - bd) + i (ad +bc)
\]
requires four real multiplications and two real additions but Gauss's method
\begin{equation}\label{eq:gauss}
(a + ib)(c+ id) = (ac - bd) + i [(a+b)(c+d) -ac -bd] 
\end{equation}
requires three real mutliplications and five real additions. If the costs of addition and multiplication are roughly the same, then Gauss's method is a poor way for multiplying complex numbers. However, the usefulness of Gauss's method comes into view when we multiply complex \textit{matrices} \change{\cite[Chapter~23]{Higham1}}, i.e., when we do
\[
(A + iB)(C+ iD) = (AC - BD) + i[(A+B)(C+D) -AC -BD]
\]
where $A,B,C,D \in \mathbb{R}^{n \times n}$. Now Gauss's method requires three \textit{matrix multiplications} instead of four. Addition and multiplication of scalars may well have similar computational costs but multiplication of $n \times n$  matrices is by any measure vastly more expensive\footnote{Even if the exponent of matrix multiplication turns out to be $2$; note that this is asymptotic.} than addition of $n \times n$ matrices. This observation applies more generally. For example, Strassen's algorithm for the product of $2 \times 2$ matrices \cite{Strass0} only becomes practically useful when it is applied (recursively) to the product of $2 \times 2$ \textit{block matrices} \change{\cite[Chapter~23]{Higham1}}.

A second reason is that \change{the preceding comparison of addition and multiplication implicitly assumes that we are using} the traditional measure of computational cost, i.e., time complexity, but other measures, e.g., energy consumption, number of gates, code space, etc, have become increasingly important. For instance, a multiplier requires many more gates than an  adder (e.g., 2200 gates for an 18-bit multiplier versus 125 gates for an 18-bit adder \cite{Kodavalla}), which translates into more wires and transistors on a microchip and also consumes more energy.

A third reason is that while the latencies of addition and multiplication are comparable on a general purpose \textsc{cpu}, it is important to remember that arithmetic is performed on other microchips as well, e.g., \textsc{asic}, \textsc{dsp}, \textsc{fpga}, \textsc{gpu}, motion coprocessor, etc, where \change{the latency of multiplication may be substantially higher than that of addition}. Moreover, our second reason also applies in this context.

\subsection{\change{Overview}}\label{sec:overview}  We begin by introducing the central object of this article, the structure tensor of a bilinear operation, and discuss several examples in Section~\ref{sec:struct}. This is followed by a discussion of tensor rank  and the closely related notion of border rank in Section~\ref{sec:rank}, allowing us to define bilinear complexity rigorously as the rank of a structure tensor. We proved several results regarding tensor rank and border rank that will be useful later when we need to determine these for a given structure tensor. We end the section with a brief discussion of numerical stability and its relation to the nuclear norm of the structure tensor. 

In Section~\ref{sec:rankstruct}, we examine the structure tensor in the special case where the bilinear operation is the product operation in an algebra and prove a relation between tensor ranks of the respective structure tensors when one algebra is mapped into another. This provides partial motivation for the generalized Cohn--Umans method in Section~\ref{sec:cohn}, where we first present the usual Cohn--Umans method as a commutative diagram of algebras and \emph{vector space homomorphisms} (as opposed to homomorphisms of algebras), followed by a demonstration that the `triple product property' is equivalent to the commutativity of the diagram. Once presented in this manner, the Cohn--Umans method essentially generalizes itself. As a first example, we show that the fast integer multiplication algorithms of Karatsuba et al.\ may be viewed as an application of the generalized Cohn--Umans method.

In the remainder of the article, we apply the generalized Cohn--Umans method to analyze a variety of structured matrix-vector products:
\begin{multicols}{2}
\begin{itemize}
\item sparse, banded, triangular: Section~\ref{sec:sparse},
\item circulant: Section~\ref{sec:circulant},
\item $f$-circulant, skew-circulant: Section~\ref{sec:f-circ},
\item Toeplitz: Section~\ref{sec:toep},
\item Hankel: Section~\ref{sec:hank},
\item triangular Toeplitz/Hankel: Section~\ref{sec:triangular},
\item Toeplitz-plus-Hankel: Section~\ref{sec:t+h},
\item block-Toeplitz-Toeplitz-block and other multilevel structures: Section~\ref{sec:bttb},
\item symmetric: Section~\ref{sec:symm},
\item skew-symmetric: Section~\ref{sec:skew-symmetric}.
\end{itemize}
\end{multicols}
Aside from the case of skew-symmetric matrices, we obtain algorithms with optimum bilinear complexities for all structured matrix-vector products listed above. In particular we obtain the rank and border rank of the structure tensors in all cases but the last.

A reader who follows the developments in Sections~\ref{sec:circulant}--\ref{sec:skew-symmetric} will observe a certain degree of interdependence between these algorithms. For example, as we have mentioned earlier, the algorithm for symmetric matrix-vector product depends on that for Hankel matrix-vector product, but the latter depends on that for Toeplitz matrix-vector product, which in turn depends on that for circulant matrix-vector product. As another example of a somewhat surprising interdependence, in Section~\ref{sec:comm}, we discuss an algorithm for the commutator product, i.e., $[A,B] = AB -BA$, for $2 \times 2$ matrices $A, B$ based on the algorithm for $3 \times 3$ skew-symmetric matrix-vector product in Section~\ref{sec:skew-symmetric}. Yet a third example is that our algorithm for skew-circulant matrix-vector product in Section~\ref{sec:f-circ} turns out to contain Gauss's multiplication of complex numbers as a special case: \eqref{eq:gauss} may be viewed as the product of a  skew-circulant matrix in $\mathbb{R}^{2 \times 2}$ with a vector in $\mathbb{R}^2$.

To round out this article, we introduce a new class of problems in Section~\ref{sec:simul} that we call `simultaneous product' of matrices. The most natural problem in this class would be the simultaneous computation of $AB$ and $AB^\mathsf{T}$ for a square matrix $B$ but we are unable to obtain any significant findings in this case. Nevertheless we provide an impetus by showing that the closely related variants of  simultaneously computing the pair of matrix products
\[
\begin{bmatrix}
a & b\\
c & d
\end{bmatrix} \begin{bmatrix}
e & f\\
g & h
\end{bmatrix}
\qquad \text{and} \qquad \begin{bmatrix}
a & b \\
c & d
\end{bmatrix}\begin{bmatrix}
g & h \\
e & f
\end{bmatrix},
\]
or  the pair of matrix products
\[
\begin{bmatrix}
a & b\\
c & d
\end{bmatrix} \begin{bmatrix}
e & f\\
g & h
\end{bmatrix}
\qquad \text{and} \qquad \begin{bmatrix}
a & b \\
c & d
\end{bmatrix}\begin{bmatrix}
h & g \\
e & f
\end{bmatrix},
\]
can be obtained with just \emph{eight} multiplications and that the resulting algorithms have optimum bilinear complexity. Note that computing the pair of products separately via Strassen's algorithm, which is optimum for $2 \times 2$ matrix-matrix product, would require $14$ multiplications.

Throughout this article, we work over $\mathbb{C}$ for simplicity but our results hold for more general fields --- quadratic, cyclotomic, infinite, or algebraically closed extensions of an arbitrary field (say, a finite field), depending on the context.

Results in Sections~\ref{sec:struct}--\ref{sec:sparse} and Section~\ref{sec:simul} are independent of our choice of field with a few exceptions: (i) any discussion of Gauss's method is of course peculiar to $\mathbb{C}$ but generalizes to any quadratic extension of an arbitrary field; (ii) the discussion of numerical stability in Section~\ref{sec:stability} require that we work over a subfield of $\mathbb{C}$ since they involve norms; (iii) Winograd's theorem (Theorem~\ref{thm:Winograd}) requires an infinite field; (iv) Corollary~\ref{cor:realization of any bilinear map over alg. closed field} requires an algebraically closed field. The results in Sections~\ref{sec:circulant}--\ref{sec:symm} for $n \times n$ structured matrices require that the field contains all $n$th roots of some element, usually $1$ but sometimes $-1$ (for skew-circulant or skew-symmetric) or $f$ (for $f$-circulant). Results in Sections~\ref{sec:skew-symmetric} and \ref{sec:comm} require an algebraically closed field.

\section{The structure tensor of a bilinear operation}\label{sec:struct}

A \textit{bilinear operation} is simply a bilinear map $\beta : U \times V \to W$ where $U,V,W$ are vector spaces over the same field, henceforth assumed to be $\mathbb{C}$. For example, the operation of forming a matrix-vector product is a bilinear operation $\beta : \mathbb{C}^{m \times n} \times \mathbb{C}^n \to \mathbb{C}^m$, $(A, x) \mapsto Ax$, since
\[
\beta(a A + bB, x) = a\beta(A,x) + b \beta(B,x), \qquad \beta(A, ax+ by) = a \beta(A,x) + b \beta(A,y).
\]
Likewise for the operations of matrix-matrix product and commutator product.

A simple but central observation in the study of bilinear complexity is that every bilinear operation is characterized by a $3$-tensor and that its tensor rank quantifies the complexity, as measured solely in terms of the number of multiplications, of the bilinear operation. We start by defining this $3$-tensor.
\begin{definition/proposition}\label{prop:bilinear-linear}
Let $\beta:U\times V\to W $ be a bilinear map. Then there exists a unique tensor $\mu_\beta \in U^* \otimes V^* \otimes W$ such that given any $(u,v)\in U\times V$ we have 
\[
\beta(u,v)=\mu_\beta(u,v,\cdot)\in W.
\]
We call $\mu_\beta$ the \textit{structure tensor} of the biliear map $\beta$.
\end{definition/proposition}
By the definition of tensor product, there is a one-to-one correspondence between the set of bilinear maps from $U\times V$ to $W$ and the set of linear maps from $U\otimes V$ to $W$. Therefore we do not distinguish between a bilinear map $\beta : U \times V \to W$ and its corresponding linear map $\beta : U \otimes V \to W$ (and denote both by $\beta$).

In the special case when $U=V=W=\mathcal{A}$ is an algebra and the bilinear map  $\beta:\mathcal{A}\times \mathcal{A}\to \mathcal{A}$, $(u,v) \mapsto uv$, is multiplication in $\mathcal{A}$. The structure tensor of $\beta$ is called the \textit{structure tensor of the algebra} $\mathcal{A}$, and is denoted by $\mu_{\mathcal{A}}$.

\begin{example}[Lie algebras] Let $\mathfrak{g}$ be a complex Lie algebra of dimension $n$ and let $\{e_1,  \dots, e_n\}$ be a basis of $\mathfrak{g}$. Let $\{e^*_1,  \dots, e^*_n\}$ be the corresponding dual basis defined in the usual way as
\[
e^*_i(e_j) =
\begin{cases}
1 & i = j,\\
0 & i \ne j,
\end{cases}
\]
for all $i,j=1,\dots,n$.
Then for each pair $i,j\in \{1,\dots, n\}$,
\[
\left[e_i,e_j\right]=\sum_{k=1}^n c_{ij}^k e_k,
\]
for some constant numbers $c_{ij}^k\in \mathbb{C}$. The structure tensor of the Lie algebra $\mathfrak{g}$ is 
\[
\mu_{\mathfrak{g}}=\sum_{i,j,k=1}^n c_{i,j}^k e^*_i\otimes e^*_j \otimes e_k\in \mathfrak{g}^* \otimes \mathfrak{g}^* \otimes \mathfrak{g}.
\]
The constants $c_{ij}^k$ are often called the \textit{structure constants} of the Lie algebra and the hypermatrix  \cite{HLA} 
\[
[c_{ij}^k] \in \mathbb{C}^{n \times n \times n}
\]
is the coordinate representation of $\mu_{\mathcal{A}}$ with respect to the basis $\{e_1,  \dots, e_n\}$.

For a specific example, take $\mathfrak{g}=\mathfrak{so}_3$, the Lie algebra of real $3\times 3$ skew-symmetric matrices and consider the basis of $\mathfrak{g}$ comprising
\[
e_1=\begin{bmatrix}
0 & 0 & 0\\
0 & 0 & -1\\
0 & 1 & 0
\end{bmatrix},\qquad
e_2=\begin{bmatrix}
0 & 0 & -1\\
0 & 0 & 0\\
1 & 0 & 0
\end{bmatrix},\qquad
e_3=\begin{bmatrix}
0 & -1 & 0\\
1 & 0 & 0\\
0 & 0 & 0
\end{bmatrix},
\]
with  dual $e^*_1,e^*_2,e^*_3$. Then the structure tensor of $\mathfrak{so}_3$ is 
\[
\mu_{\mathfrak{so}_3}=\sum_{i,j,k=1}^3 \epsilon_{ij}^k e^*_i \otimes e^*_j \otimes e_k,
\]
where 
\[
\epsilon_{ij}^k=\frac{(i-j)(j-k)(k-i)}{2},
\]
often called the Levi-Civita symbol.
\end{example}

\begin{example}[Matrix multiplication] Consider the usual matrix product $\beta : \mathbb{C}^{m \times n} \times \mathbb{C}^{n \times p} \to \mathbb{C}^{m \times p}$, $(A,B) \mapsto AB$. We let $E_{ij}$ be the elementary matrix with one in the $(i,j)$th entry and zeros elsewhere and $E_{ij}^*$ be the dual. Then the structure tensor for this bilinear operation is 
\[
\mu_{m,n,p}=\sum_{i,j,k=1}^{m,n,p} E_{ij}^* \otimes E_{jk}^*\otimes E_{ik},
\]
the famous \textit{Strassen matrix multiplication tensor}. With respect to these bases, $\mu_{m,n,p}$ is an $mn \times np \times mp$ hypermatrix whose entries are all zeros and ones. When $m = n = p$, this becomes the structure tensor of the matrix algebra $\mathbb{C}^{n \times n}$.
\end{example}

\begin{example}[Matrix-vector multiplication]\label{eg:matvec} Consider the bilinear map
\[
\beta:\mathbb{C}^{n \times n}\times \mathbb{C}^n \to \mathbb{C}^n, \quad (A,x) \mapsto Ax.
\]
Let $\{E_{ij}\in \mathbb{C}^{n \times n}: i,j =1,\dots,n\}$ and $\{e_i \in \mathbb{C}^n: i =1,\dots,n\}$ be the standard bases for $\mathbb{C}^{n\times n}$ and $\mathbb{C}^{n}$ respectively. Then the structure tensor of $\beta$ is 
\[
\mu_\beta=\sum_{i,j=1}^n E^*_{ij}\otimes e^*_{j}\otimes e_i.
\]
With respect to these bases, $\mu_\beta$ is an $n^2 \times n \times n$ hypermatrix whose entries are all zeros and ones. This is of course nothing more than a special case of the previous example with $m = n$ and $p = 1$.
\end{example}

A comment is in order for those who are not familiar with multilinear algebra and wonders about the difference between $\beta$ and $\mu_\beta$. Given a bilinear map $\beta:U\times V\to W$ there exists a unique trilinear function $\tilde{\beta}:U\times V\times W^*\to \mathbb{C}$ such that given any $(u,v,\omega)\in U\times V\times W^*$ we have
\[
\omega(\beta(u,v))=\tilde{\beta}(u,v,\omega).
\]
Furthermore, both $\beta$ and $\tilde{\beta}$ correspond to the same tensor $\mu_\beta\in U^*\otimes V^*\otimes W$ and so $\mu_\beta$ quantifies both the bilinear operation $\beta$ and the trilinear operation $\tilde{\beta}$. As a concrete example, consider Example~\ref{eg:matvec} where $\beta:\mathbb{C}^{n \times n}\times \mathbb{C}^n \to \mathbb{C}^n$ is the matrix-vector product
\[
\beta(A,x)=Ax.
\]
Then $\tilde{\beta}:\mathbb{C}^{n \times n}\times \mathbb{C}^n\times \mathbb{C}^n\to \mathbb{C}$ is 
\[
\tilde{\beta}(A,x,y)=y^\mathsf{T} A x,
\]
and they correspond to the same tensor $\mu_\beta\in (\mathbb{C}^{n \times n})^*\otimes (\mathbb{C}^n) ^*\otimes \mathbb{C}^n$.

We conclude this section with the simplest example, but worked out in full details for the benefit of readers unfamiliar with multilinear algebra.
\begin{example}[Complex number multiplication]\label{eg:cplx}
Complex numbers form a two-dimensional algebra over $\mathbb{R}$ and the multiplication of complex numbers is an $\mathbb{R}$-bilinear map 
\[
\beta:\mathbb{C}\times \mathbb{C}\to \mathbb{C}, \quad (a + bi, c+di) \mapsto (ac -bd) + (ad + bc)i,
\]
for any $a,b,c,d \in \mathbb{R}$.
Let $e_1= 1 + 0i = 1$ and  $e_2= 0 + 1i = i$ be the standard basis of $\mathbb{C}$ over $\mathbb{R}$ and let $e^*_1, e^*_2$ be the corresponding dual basis. The structure tensor of $\mathbb{C}$ is, by definition, the structure tensor of $\beta$ and is given by
\begin{equation}\label{eq:cplx1}
\mu_{\mathbb{C}} = \mu_\beta =e^*_1\otimes e^*_1\otimes e_1  -e^*_2\otimes e^*_2\otimes e_1 + e^*_1\otimes e^*_2\otimes e_2+e^*_2 \otimes e^*_1\otimes e_2,
\end{equation}
or, as a hypermatrix with respect to these bases,
\begin{equation}\label{eq:cplx}
\mu_{\mathbb{C}} =
\biggl[ \begin{matrix}
1 & 0\\
0 & -1
\end{matrix} \biggm|
\begin{matrix}
0 & 1\\
1 & 0
\end{matrix} \biggr] \in \mathbb{R}^{2 \times 2 \times 2}.
\end{equation}
We provide here a step-by-step verification that $\mu_{\mathbb{C}}$ is indeed the structure tensor for complex number multiplication over $\mathbb{R}$. Given two complex numbers $z_1 = a+bi$, $z_2=c+di \in \mathbb{C}$, we write them as $z_1 =ae_1+be_2$, $z_2=ce_1+de_2$. Then
\begin{align*}
\mu_{\mathbb{C}}(z_1,z_2) &= [(e^*_1(z_1) e^*_1(z_2) -e^*_2(z_1) e^*_2(z_2)] e_1 + [e^*_1(z_1) e^*_2(z_2)+e^*_2(z_1) e^*_1(z_2)] e_2\\
&= [(e^*_1(ae_1+be_2) e^*_1(ce_1+de_2) -e^*_2(ae_1+be_2) e^*_2(ce_1+de_2)] e_1 \\
&\qquad \qquad+ [e^*_1(ae_1+be_2) e^*_2(ce_1+de_2)+e^*_2(ae_1+be_2) e^*_1(ce_1+de_2)] e_2\\
&=(ac-bd)e_1 + (ad+bc) e_2=(ac-bd,ad+bc)\\
&=(ac-bd)+(ad+bc)i.
\end{align*}
For the uninitiated wondering the usefulness of all these, we will see in Section~\ref{sec:rank} that the notion of tensor rank and its associated rank decomposition allow us to discover faster, possibly fastest, algorithms for various bilinear operations. For instance, the four-term decomposition in \eqref{eq:cplx} gives us the usual algorithm for multiplying complex numbers but as we will see, one may in fact obtain a three-term decomposition for $\mu_{\mathbb{C}}$,
\begin{equation}\label{eq:cplx2}
\mu_{\mathbb{C}}=(e^*_1+e^*_2)\otimes (e^*_1+e^*_2)\otimes e_2 + e^*_1\otimes e^*_1\otimes (e_1-e_2) - e^*_2\otimes e^*_2\otimes (e_1+e_2).
\end{equation}
This gives us Gauss's method for multiplying two complex numbers with three real multiplications that we saw in \eqref{eq:gauss}.
We will have more to say about Gauss's method in Section~\ref{sec:f-circ} --- it turns out to be identical to the simplest case of our algorithm for skew-circulant matrix-vector product.
\end{example}

\section{Tensor rank, border rank, and bilinear complexity}\label{sec:rank}

The \textit{Strassen tensor rank method} that we have alluded to in the introduction studies the optimal bilinear complexity of a bilinear operation by studying the rank and border rank of its structure tensor.
\begin{definition}
Let $\mu_\beta$ be the structure tensor of a bilinear map $\beta:U\times V \to W$, we say that the \textit{tensor rank} or just \textit{rank} of $\mu_\beta$ is $r$ if $r$ is the smallest positive integer such that there exist $u^*_1,\dots, u^*_r\in U^*$, $v^*_1,\dots, v^*_r\in V^*$, and $w_1, \dots, w_r\in W$ such that 
\begin{equation}\label{eq:decomp}
\mu_\beta=\sum_{i=1}^r u^*_i\otimes v^*_i\otimes w_i.
\end{equation}
We denote this by $\rank(\mu_\beta)=r$.
We say that the \textit{border rank} of $\mu_\beta$ is $r$ if $r$ is the smallest positive integer such that there exists a sequence of tensors $\{\mu_n \}_{n=1}^\infty$ of rank $r$ such that 
\[
\lim_{n\to \infty} \mu_n= \mu_\beta.
\]
We denote this by $\brank\mu_\beta=r$. We define the rank and border rank of the zero tensor to be zero.
\end{definition}

Our interest in tensor rank is that it gives us exactly  \textit{the least number of multiplications} required to evaluate $\beta(u,v)$ for arbitrary inputs $u$ and $v$. This is established later in Proposition~\ref{prop:rank multiplication}. In which case border gives the least number of multiplication required to evaluate $\beta(u,v)$  up to arbitrarily high accuracy for arbitrary inputs $u$ and $v$. The study of complexity of bilinear operations in this manner is called \textit{bilinear complexity}, originally due to Strassen \cite{Strass1, Strass2} and has developed into its own subfield within complexity theory \cite[Chapter~14]{BCS}.

To illustrate this, we will start with a simple analysis to show that the usual way of computing matrix-vector product has optimal bilinear complexity, i.e., computing the product of an $m \times n$ matrix and \change{a vector of dimension $n$} requires $mn$ multiplications and one cannot do better.
\begin{proposition}\label{prop:rank lower bound}
Let $\beta:U\times V\to W$ be a bilinear map and suppose $\operatorname{span}(\mu_\beta(U\otimes V))=W$. Then 
\[
\rank(\mu_\beta)\ge \dim W.
\]
The role of $W$ may be replaced by $U$ or $V$.
\end{proposition}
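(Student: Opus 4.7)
The plan is to unpack a minimal rank decomposition and observe directly that its third factors must span $W$.

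Suppose $\rank(\mu_\beta) = r$ and write
\[
\mu_\beta = \sum_{i=1}^r u_i^* \otimes v_i^* \otimes w_i
\]
with $u_i^* \in U^*$, $v_i^* \in V^*$, $w_i \in W$. Applying the natural contraction convention used in Definition/Proposition~\ref{prop:bilinear-linear}, for any $(u,v) \in U \times V$ one gets
\[
\beta(u,v) = \mu_\beta(u,v,\cdot) = \sum_{i=1}^r u_i^*(u) v_i^*(v) w_i \in \operatorname{span}(w_1, \dots, w_r).
\]
Hence $\mu_\beta(U \otimes V) \subseteq \operatorname{span}(w_1, \dots, w_r)$. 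By hypothesis $\operatorname{span}(\mu_\beta(U \otimes V)) = W$, so $W \subseteq \operatorname{span}(w_1, \dots, w_r)$, which forces $\dim W \le r = \rank(\mu_\beta)$.

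For the two remaining assertions I would appeal to the well-known invariance of tensor rank under permutation of the three tensor factors: viewing $\mu_\beta$ instead as an element of $V^* \otimes W \otimes U^{**} \cong V^* \otimes W \otimes U$ (resp.\ $W \otimes U^* \otimes V^{**} \cong W \otimes U^* \otimes V$) via the canonical identifications, a rank-$r$ decomposition of $\mu_\beta$ as above yields a rank-$r$ decomposition with the $u_i \in U$ (resp.\ $v_i \in V$) sitting in the last slot. Repeating the span argument of the previous paragraph then gives $\dim U \le r$ (resp.\ $\dim V \le r$) under the corresponding spanning hypothesis.

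There is no serious obstacle; the only thing to be careful about is interpreting the parenthetical ``the role of $W$ may be replaced by $U$ or $V$'' in a symmetric way, which amounts to transposing the 3-tensor $\mu_\beta$ (in the sense of cycling its three factors) and using that tensor rank is preserved under this operation. The core of the proof is the one-line observation that every value of $\beta$ lies in the span of the $W$-components appearing in \emph{any} rank decomposition.
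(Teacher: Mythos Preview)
Your proof is correct and follows essentially the same approach as the paper's: both observe that the image $\mu_\beta(U\otimes V)$ lies in the span of the $W$-factors $w_1,\dots,w_r$ of any rank decomposition, forcing $\dim W\le r$. The paper phrases this as a proof by contradiction while you argue directly, and the paper leaves the symmetric statements for $U$ and $V$ implicit whereas you spell them out via permutation-invariance of tensor rank, but the core argument is the same.
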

\begin{proof}
If not, then 
\[
\mu_\beta=\sum_{i=1}^r u^*_i\otimes v^*_i\otimes w_i\in U^*\otimes V^*\otimes W
\]
for some integer $r< \dim W$ and vectors $u^*_i\in U^*, v^*_i\in V^*, w_i\in W$. Hence 
\[
\operatorname{span}(\mu_\beta(U\otimes V))\subset \operatorname{span}\{w_i : i=1,\dots, r\}.
\]
But this contradicts the assumption that $\mu_\beta(U\otimes V)=W$.
\end{proof} 

As an immediate application of Proposition~\ref{prop:rank lower bound}, we will show that for a general matrix, the usual way of doing matrix-vector product is already optimal, i.e., Strassen-type fast algorithms  for matrix-matrix product do not exist when one of the matrices is a vector (has only one column or row).
\begin{corollary}\label{cor:mat-vec}
Let $\beta:\mathbb{C}^{m\times n}\times \mathbb{C}^n\to \mathbb{C}^m$ be the bilinear map defined by the matrix-vector product. Then 
\[
\rank(\mu_\beta)=mn.
\]
\end{corollary}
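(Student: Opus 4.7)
The plan is a standard sandwich argument: construct an explicit rank decomposition of $mn$ terms to bound $\rank(\mu_\beta) \le mn$, and then invoke Proposition~\ref{prop:rank lower bound} --- crucially, in the form where the role of $W$ is replaced by $U$ --- to obtain the matching lower bound $\rank(\mu_\beta) \ge mn$.

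For the upper bound, I would just write down the decomposition coming from the usual entry-wise formula $(Ax)_i = \sum_j A_{ij}x_j$. In the notation of Example~\ref{eg:matvec}, extended to the rectangular case with $E_{ij}\in\mathbb{C}^{m\times n}$, $e_i\in\mathbb{C}^m$, $e_j\in\mathbb{C}^n$, this reads
\[
\mu_\beta \;=\; \sum_{i=1}^{m}\sum_{j=1}^{n} E_{ij}^{*}\otimes e_j^{*}\otimes e_i,
\]
an explicit sum of $mn$ rank-one terms.

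For the lower bound, a naive application of Proposition~\ref{prop:rank lower bound} with $W=\mathbb{C}^m$ only yields $\rank(\mu_\beta)\ge m$, since $\beta$ is surjective. To extract the sharp bound, I would apply the alternate form of the proposition in which the role of $W$ is replaced by $U=\mathbb{C}^{m\times n}$, which has dimension $mn$. For this it suffices to check that when $\mu_\beta$ is viewed as a linear map $V\otimes W^{*}\to U^{*}$, its image spans $U^{*}$. For $v\in\mathbb{C}^n$ and $w\in\mathbb{C}^m$, the contracted tensor is the linear functional $A\mapsto w^{\mathsf{T}}Av$ on $\mathbb{C}^{m\times n}$; specializing to $v=e_j$ and $w=e_i$ produces precisely the coordinate functional $A\mapsto A_{ij}$, and these exhaust a basis of $U^{*}$ as $i,j$ vary. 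Hence $\rank(\mu_\beta)\ge \dim U = mn$, and the two bounds combine to give the desired equality.

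There is no real obstacle here. The only point requiring a moment's thought is recognising which of the three flattenings of $\mu_\beta$ to feed into Proposition~\ref{prop:rank lower bound}: it is the $U$-slot rather than the $W$-slot that delivers the sharp bound, reflecting the intuition that the matrix $A$ carries $mn$ independent pieces of information that must each appear in any bilinear computation of $Ax$ valid for \emph{all} inputs $x$.
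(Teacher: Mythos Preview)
Your proof is correct and follows essentially the same route as the paper's: the upper bound via the obvious $mn$-term decomposition, and the lower bound via Proposition~\ref{prop:rank lower bound} applied in the $U$-slot, verified by showing that $e_j\otimes e_i^*$ is sent to the coordinate functional $A\mapsto A_{ij}$. The paper's argument is terser (it only computes $\mu_\beta(e_j\otimes e_i^*)(E_{ij})=1$ and leaves the rest implicit), but the content is the same.
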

\begin{proof}
It is easy to see that $\rank(\mu_\beta)\le mn$. On the other hand, \[
\mu_\beta(\mathbb{C}^n\otimes (\mathbb{C}^m)^*)=(\mathbb{C}^{m\times n})^*,
\]
since for the matrix $E_{ij}$ with $(i,j)$th entry one and zero elsewhere, we have
\[
\mu_\beta(e_j\otimes e^*_i)(E_{ij})=e^*_i(E_{ij}e_j)=1.
\]
Here $\{e_j: j=1,\dots,n\}$ is the standard basis of $\mathbb{C}^n$ and $\{e^*_i: i=1,\dots,m\}$ is its dual basis for $(\mathbb{C}^m)^*$.
\end{proof}
This establishes our earlier claim that $mn$ is the minimal number of required multiplications for forming a matrix-vector product. Next we show that we cannot do better than $mn$ even if we are only interested in computing matrix-vector product up to arbitrary accuracy.

Clearly we have
\[
\brank(\mu_\beta)\le \rank(\mu_\beta)
\]
in general but equality is attained in the following special case. \change{The next proposition turns out to be a very useful result for us --- we will rely on it repeatedly to find border ranks of various structure tensors in Sections~\ref{sec:sparse}--\ref{sec:simul}.}
\begin{proposition}\label{prop:border rank equals rank}
Let $\beta:U\times V\to W$ be a bilinear map and assume $\mu_\beta(U\otimes V)=W$ and $\rank(\mu_\beta)=\dim W\le \dim U\dim V$. Then 
\[
\brank(\mu_\beta)=\rank(\mu_\beta).
\]
\end{proposition}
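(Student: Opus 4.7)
The plan is as follows. One always has $\brank(\mu_\beta)\le \rank(\mu_\beta)$, so the content of the proposition lies in the reverse inequality $\brank(\mu_\beta)\ge \dim W$. I would prove this by exploiting the fact that \emph{surjectivity is an open condition} on the space of linear maps $U\otimes V\to W$.

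The starting observation, which mirrors the estimate already used in the proof of Proposition~\ref{prop:rank lower bound}, is the following. If $\mu\in U^*\otimes V^*\otimes W$ admits a rank decomposition $\mu=\sum_{i=1}^s u_i^*\otimes v_i^*\otimes w_i$, then, viewed as a linear map $U\otimes V\to W$, the image of $\mu$ is contained in $\operatorname{span}\{w_1,\dots,w_s\}$ and hence has dimension at most $s$. Consequently, every tensor of rank strictly less than $\dim W$ fails to be surjective as a linear map from $U\otimes V$ to $W$. So if we can show that $\mu_\beta$ has an open neighborhood consisting of surjective linear maps, no such rank-deficient tensor can approach $\mu_\beta$, and we conclude $\brank(\mu_\beta)\ge \dim W$.

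To that end I would invoke the dimensional hypothesis $\dim W\le \dim U\cdot \dim V$: this is precisely what ensures that the surjective locus in $\operatorname{Hom}(U\otimes V,W)\cong U^*\otimes V^*\otimes W$ is non-empty. Once bases are chosen, surjectivity of a linear map between finite-dimensional vector spaces is equivalent to the non-vanishing of at least one $\dim W\times \dim W$ minor of the associated matrix, and so it cuts out a Zariski-open (in particular Euclidean-open) subset. By assumption $\mu_\beta$ lies in this open set, and therefore any sequence $\mu_n\to \mu_\beta$ is eventually surjective; by the previous paragraph each such $\mu_n$ satisfies $\rank(\mu_n)\ge \dim W$. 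This yields $\brank(\mu_\beta)\ge \dim W=\rank(\mu_\beta)$, completing the proof.

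There is no serious obstacle in the argument. The only point requiring vigilance is ensuring that the surjective locus is actually non-empty, i.e., that saying ``surjectivity is open'' has content at all; this is exactly what the hypothesis $\dim W\le \dim U\cdot \dim V$ buys us, and without it the entire scheme would be vacuous.
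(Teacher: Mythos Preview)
Your argument is correct and is essentially the paper's own proof: both exploit that a tensor of rank $r$, regarded as a linear map $U\otimes V\to W$, has matrix rank at most $r$, together with the lower semicontinuity of matrix rank (you phrase this as ``surjectivity is open via minors,'' the paper as ``for matrices, rank equals border rank''). One minor note: the hypothesis $\dim W\le \dim U\cdot\dim V$ is already forced by the surjectivity assumption $\mu_\beta(U\otimes V)=W$, so it is not a separate ingredient you need to invoke.
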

\begin{proof}
Assume that $\brank(\mu_\beta)<\rank(\mu_\beta)$. Notice that we may regard $\beta$ as a linear map 
\[
\beta: U\otimes V \to W.
\]
Since $\mu_\beta(U\otimes V)=W$ and $\rank(\mu_\beta)=\dim W$, the rank of $\beta$ as a linear map (or a matrix) is $\dim W$. Let $r'=\brank(\mu_\beta)$. Then there is a sequence $\{\mu_n\}_{n=1}^\infty$ of tensors of rank $r'$  such that 
\[
\lim_{n\to \infty} \mu_n=\mu_\beta.
\]
We can similarly regard $\mu_n$ as a linear map $\mu_n:U\otimes V \to W$. Since $\rank(\mu_n)=r'$ we see that the rank of $\mu_n$ as a linear map is at most $r'$. By the choice of the sequence $\{\mu_n\}_{n=1}^\infty$, we see that
\[
\lim_{n\to \infty} \mu_n = \mu_\beta
\]
as linear maps (or matrices). Hence the border rank of $\mu_\beta$ as a linear map (or a matrix) is at most $r'$. However, for matrices, the notion of rank is the same as border rank. This contradicts the fact that $\rank \mu_\beta = \dim W > r'$.  
\end{proof}

We deduce that the usual way of performing matrix-vector product is also optimal even if we are only interested in approximating the result up to arbitrary accuracy. \change{The following result may also be deduced from the proof (but not the statement) of \cite[Lemma~6.1]{Schon}.}
\begin{corollary}\label{cor:mat-vec2}
The border rank of the structure tensor of $m\times n$ matrix-vector product is $mn$.
\end{corollary}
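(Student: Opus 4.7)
The plan is to combine Corollary~\ref{cor:mat-vec} with Proposition~\ref{prop:border rank equals rank}, after a small reshaping of $\mu_\beta$. A direct application of Proposition~\ref{prop:border rank equals rank} to the matrix-vector product map $\beta : \mathbb{C}^{m\times n}\times \mathbb{C}^n \to \mathbb{C}^m$ fails, because the hypothesis $\rank(\mu_\beta) = \dim W$ would read $mn = m$, which is false. The fix exploits the fact that tensor rank and border rank are invariant under permuting the three factors of a $3$-tensor, so we are free to view $\mu_\beta \in (\mathbb{C}^{m\times n})^* \otimes (\mathbb{C}^n)^* \otimes \mathbb{C}^m$ as the structure tensor of a different bilinear map in which the role of the ``output'' factor is played by a space of dimension $mn$.

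The natural reshaping to pick is the bilinear map
\[
\beta' : \mathbb{C}^n \times (\mathbb{C}^m)^* \to (\mathbb{C}^{m\times n})^*, \quad (x,\phi) \mapsto \bigl[A \mapsto \phi(Ax)\bigr],
\]
whose structure tensor agrees with $\mu_\beta$ up to the evident permutation of tensor factors; one checks on bases that $\beta'(e_j,e^*_i) = E^*_{ij}$, which matches the expression for $\mu_\beta$ in Example~\ref{eg:matvec}. For $\beta'$ all three hypotheses of Proposition~\ref{prop:border rank equals rank} line up: the image equality $\mu_\beta(\mathbb{C}^n \otimes (\mathbb{C}^m)^*) = (\mathbb{C}^{m\times n})^*$ is exactly what was verified inside the proof of Corollary~\ref{cor:mat-vec} using the elementary matrices $E_{ij}$; the rank of the structure tensor of $\beta'$ equals $mn = \dim (\mathbb{C}^{m\times n})^*$ by Corollary~\ref{cor:mat-vec} (and invariance of rank under reshaping); and $mn = \dim (\mathbb{C}^{m\times n})^* \le \dim\mathbb{C}^n \cdot \dim(\mathbb{C}^m)^* = nm$ holds with equality.

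Applying Proposition~\ref{prop:border rank equals rank} to $\beta'$ then yields $\brank(\mu_{\beta'}) = \rank(\mu_{\beta'}) = mn$, and because both invariants are unchanged under permuting the three factors, $\brank(\mu_\beta) = mn$, which is the claim. The only step of any substance is recognizing that one must reshape before invoking Proposition~\ref{prop:border rank equals rank}; once this is done, everything reduces to results already established in the excerpt.
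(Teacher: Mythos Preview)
Your proposal is correct and follows essentially the same route the paper intends: the paper states the corollary without an explicit proof, treating it as an immediate consequence of Proposition~\ref{prop:border rank equals rank} together with the surjectivity $\mu_\beta(\mathbb{C}^n\otimes (\mathbb{C}^m)^*)=(\mathbb{C}^{m\times n})^*$ already verified in the proof of Corollary~\ref{cor:mat-vec}. You have simply made explicit the reshaping step (permuting the tensor factors so that the $mn$-dimensional space plays the role of $W$) that the paper leaves implicit.
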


We now give the deferred proof establishing the role of tensor rank in bilinear complexity. This simple result is well-known, classical (see the discussions in \cite{BCS, Strass,Strass1,Strass2}), \change{and has a trivial proof. But given its central importance in our article, we include the statement and proof for easy reference.}
\begin{proposition}\label{prop:rank multiplication}
The rank of $\mu_\beta$ equals the least number of multiplications needed to compute the bilinear map $\beta$. 
\end{proposition}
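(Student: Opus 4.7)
The plan is to prove the two inequalities $\rank(\mu_\beta) \ge M(\beta)$ and $\rank(\mu_\beta) \le M(\beta)$ separately, where $M(\beta)$ denotes the least number of multiplications needed in any algorithm computing $\beta$.

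For the easy direction $\rank(\mu_\beta) \ge M(\beta)$, I would start from a rank decomposition
\[
\mu_\beta = \sum_{i=1}^r u_i^* \otimes v_i^* \otimes w_i
\]
with $r = \rank(\mu_\beta)$ and observe that on inputs $(u,v)$ one has $\beta(u,v) = \sum_{i=1}^r u_i^*(u)\, v_i^*(v)\, w_i$. Evaluating each $u_i^*(u)$ and each $v_i^*(v)$ consists only of forming linear combinations of the entries of $u$ and $v$ with fixed scalars (no multiplications of inputs), and the final assembly of $\beta(u,v)$ is another fixed linear combination of the $r$ scalar products $u_i^*(u)\, v_i^*(v)$. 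So exactly $r$ multiplications suffice, giving an algorithm that realizes $\rank(\mu_\beta)$.

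For the converse $\rank(\mu_\beta) \le M(\beta)$, the plan is to take any straight-line program computing $\beta(u,v)$ with $r$ essential multiplications and put it in a normal form in which each of the $r$ multiplications is of the shape (linear form in $u$) times (linear form in $v$). The normalization step uses bilinearity: inputs split into the disjoint groups $u$ and $v$, and since the output $\beta(u,v)$ is homogeneous of bidegree $(1,1)$ in $(u,v)$, one may extract the bidegree-$(1,1)$ component of the algorithm without increasing the multiplication count. Scalar-by-scalar products and products involving only $u$-variables (or only $v$-variables) can be folded into the linear pre- and post-processing; divisions and nonlinear auxiliary quantities can be eliminated by the same bidegree argument (the unwanted bidegree components cancel out in the final result). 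After this reduction the algorithm reads
\[
\beta(u,v) = \sum_{i=1}^r \bigl(u_i^*(u)\bigr)\bigl(v_i^*(v)\bigr)\, w_i
\]
for some $u_i^* \in U^*$, $v_i^* \in V^*$, $w_i \in W$, which is exactly a rank-$r$ decomposition of $\mu_\beta$. Hence $\rank(\mu_\beta) \le r = M(\beta)$.

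The main obstacle is the normalization step in the converse direction: making precise what counts as a "multiplication" and rigorously arguing that a general arithmetic program whose inputs are split as $(u,v)$ can be converted to a bilinear algorithm of the above form without inflating the number of essential multiplications. This is the classical argument (see \cite[Chapter~14]{BCS}) that rests on the bidegree decomposition of polynomial computations; since the excerpt explicitly notes that the result is well-known and classical, I would cite this reference rather than reproduce the full normal-form proof, and focus the exposition on the bidegree-extraction idea that makes the reduction work.
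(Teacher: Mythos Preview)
Your easy direction $M(\beta)\le \rank(\mu_\beta)$ is exactly what the paper's own proof does: it writes down a rank decomposition $\mu_\beta=\sum_{i=1}^r u_i^*\otimes v_i^*\otimes w_i$, observes that $\beta(u,v)=\sum_{i=1}^r u_i^*(u)\,v_i^*(v)\,w_i$, and notes that this costs $r$ multiplications. That is in fact the \emph{entire} proof in the paper --- the authors label the result as ``well-known, classical \dots\ and has a trivial proof'' and stop after this one inequality.

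You go further and supply the converse $\rank(\mu_\beta)\le M(\beta)$ via the standard bidegree-extraction / normal-form argument for straight-line programs. This is the genuinely nontrivial half (and is the content of \cite[Chapter~14]{BCS} that both you and the paper cite). Your sketch is correct in outline: once one restricts to the bidegree-$(1,1)$ component, each essential multiplication contributes a term of the shape $u_i^*(u)\,v_i^*(v)$, yielding a decomposition of $\mu_\beta$ with at most $r$ terms. So your proof is strictly more complete than the paper's; the paper effectively treats this direction as part of the definition (or defers it to the cited references), while you actually indicate why it holds. Either presentation is acceptable given the paper's explicit disclaimer, but yours is the more honest accounting of what needs to be shown.
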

\begin{proof}
Given $u\in U$ and $v\in V$, then by definition 
\[
\mu_\beta(u,v,\cdot)=\beta(u,v)\in W. 
\]
Since $\rank(\mu_\beta)=r$ we may write $\mu_\beta$ as
\[
\mu_\beta=\sum_{i=1}^r u^*_i \otimes v^*_i\otimes w_i
\]
for some $u^*_i\in U^*,v^*_i\in V^*$ and $w_i\in W$. Hence 
\[
\beta(u,v)=\sum_{i=1}^r u^*_i(u)v^*_i(v) w_i\in W.
\]
Notice that $u^*_i(u)$ and $v^*_i(v)$ are complex numbers and thus to compute $\beta$ we only need $r$ multiplications.
\end{proof}

\subsection{Remarks on arithmetic}\label{sec:arith} We highlight a common pitfall in the precise meaning of the word `multiplication'  used in the context of bilinear complexity. Here it refers strictly to the multiplications of indeterminates but excludes multiplications of a constant and an indeterminate or of two constants. For example, we need one multiplication to calculate $(x,y) \mapsto x\cdot y$ but zero multiplication to calculate $x \mapsto cx$ for any constant $c \in \mathbb{C}$. We will use the term 'scalar multiplication' to refer to the multiplication of two scalars or that of a scalar and an indeterminate over the given field. For instance, $2\cdot 3$ or $2\cdot x$ each requires one scalar multiplication  to compute.

So in the context of bilinear complexity, a discrete Fourier transform (\textsc{dft}) may be computed with just $O(1)$, in fact zero, multiplications. The usual $O(n \log n)$ complexity in \textsc{fft} counts scalar multiplications. As the case of \textsc{fft} illustrates, one reason for the exclusion of multiplication involving constants is that this part may often be performed with specialized subroutines or implemented in hardware. On the other hand, bilinear complexity counts only multiplications of variable quantities that could change from input to input.

\change{In particular, traditional studies of structured matrix-vector product, e.g., the superfast algorithms in \cite[Chapter~2]{Pan}, rely on the usual measure of computational complexity, counting all arithmetic operations (addition, multiplication, scalar addition, scalar multiplication, etc) as opposed to bilinear complexity. That is why the complexity estimates in \cite[Chapter~2]{Pan} differ substantially from those in Sections~\ref{sec:circulant}--\ref{sec:hank}.}

In the most widely studied case of matrix multiplication $\beta: \mathbb{C}^{n \times n} \times \mathbb{C}^{n \times n} \to \mathbb{C}^{n \times n}$, the rank of $\mu_\beta$ informs us  about the total arithmetic complexity of $\beta$, i.e., counting both additions and multiplications of indeterminates, as the following theorem from \cite{BCS} shows.
\begin{theorem}[Strassen]\label{thm:matrix rank complexity}
Let $R_n$ be the rank of $\mu_\beta$ and let $M_n$ the computational complexity of the matrix multiplication. Then 
\[
\inf\{\tau\in \mathbb{R}: M_n = O(n^\tau)\}=\inf\{\tau \in \mathbb{R} : R_n=O(n^\tau)\}.
\]
\end{theorem}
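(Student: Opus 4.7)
The plan is to prove the two inequalities $\omega_R \leq \omega_M$ and $\omega_M \leq \omega_R$ separately, where $\omega_R := \inf\{\tau : R_n = O(n^\tau)\}$ and $\omega_M := \inf\{\tau : M_n = O(n^\tau)\}$. Trivially $R_n \leq M_n$, since a rank decomposition of $\mu_\beta$ counts only bilinear multiplications while $M_n$ counts every arithmetic operation. The only subtlety here is that an arbitrary straight-line program computing matrix multiplication need not be in bilinear form (it may use non-bilinear products like $(\alpha A_{ij} + \beta B_{k\ell}) \cdot (\alpha' A_{i'j'} + \beta' B_{k'\ell'})$, or even divisions). I would invoke the standard bilinearization/elimination-of-divisions procedure (due to Strassen; see \cite{BCS}) to convert any such program into a bilinear program with no more essential multiplications, thereby yielding a rank decomposition of $\mu_\beta$ of size at most $M_n$. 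This gives $\omega_R \leq \omega_M$.

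For the reverse inequality, the key is the \emph{tensor power / recursive blocking} trick. Fix $N$ and suppose $R_N \leq r$, so there exists a decomposition
\[
\mu_{N,N,N} = \sum_{s=1}^{r} u_s^* \otimes v_s^* \otimes w_s.
\]
Reading this decomposition as an algorithm, the product $AB$ of two $N\times N$ matrices is computed as a sum $\sum_{s=1}^r u_s^*(A)\,v_s^*(B)\,w_s$, requiring $r$ scalar multiplications plus at most $C\cdot N^2$ additions and scalar multiplications (a constant $C$ depending only on the fixed decomposition). Now view an $N^k \times N^k$ matrix as an $N \times N$ block matrix whose blocks are themselves $N^{k-1} \times N^{k-1}$ matrices. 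Applying the above algorithm at the block level replaces each scalar multiplication by a matrix multiplication of size $N^{k-1}$ and each addition by a block addition costing $N^{2(k-1)}$ operations. This yields the recursion
\[
M_{N^k} \leq r\, M_{N^{k-1}} + C\, r\, N^{2(k-1)}.
\]
Provided $r > N^2$ (the only regime of interest, as otherwise $\omega \geq 2$ trivially beats it), the recursion unwinds to $M_{N^k} = O(r^k) = O\!\bigl((N^k)^{\log_N r}\bigr)$. For arbitrary $n$, pad with zeros up to the nearest power $N^k$ with $N^{k-1} < n \leq N^k$, which inflates the bound only by a constant. Hence $\omega_M \leq \log_N r$ for every $r \geq R_N$ and every $N$, so $\omega_M \leq \omega_R$.

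The two main obstacles are the bookkeeping in the addition count of the recursion and the bilinearization step in the easy direction. The former is handled by the observation that the constant $C$ depends only on the \emph{fixed} decomposition at base size $N$, so it does not interfere with the exponent; the geometric series $\sum_j r^{k-j} N^{2j}$ is dominated by $r^k$ as soon as $r > N^2$. The latter is the genuinely nontrivial input: it relies on Strassen's theorem that, for computing a bilinear map, one can eliminate divisions and non-bilinear multiplications at no cost in multiplicative complexity. Once both pieces are in place, the equality $\omega_M = \omega_R$ follows by taking infima over $\tau$ on both sides.
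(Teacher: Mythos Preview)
The paper does not actually prove Theorem~\ref{thm:matrix rank complexity}; it is stated without proof and attributed to Strassen via the reference \cite{BCS}. So there is no ``paper's own proof'' to compare against.

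Your argument is the standard one found in \cite{BCS} and is essentially correct. A couple of small remarks. First, in the direction $\omega_R \le \omega_M$, the inequality $R_n \le M_n$ is not literally trivial: as you note, an arbitrary straight-line program may use divisions or non-bilinear products, and Strassen's bilinearization only guarantees $R_n \le 2 L_n$ where $L_n$ is the multiplicative complexity (so $R_n \le 2 M_n$). This constant factor is irrelevant for the exponent, but you should not call it trivial and then invoke a nontrivial theorem to justify it. Second, in the recursion the additive term is more naturally $C\,N^{2}\cdot N^{2(k-1)}$ (a fixed number of linear forms at the top level, each a block addition), but since $N$ is fixed this is absorbed into your constant and the geometric-series estimate goes through unchanged. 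With these caveats, your proof is fine and is exactly what the cited reference does.
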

This says that asymptotically, the order of rank of $\mu_\beta$ equals the order of the computational complexity of matrix multiplication. Moreover, combined with Proposition~\ref{prop:rank multiplication}, the number of multiplications needed in matrix multiplication dominates the number of additions. This is a very special phenomenon. In general, the number of multiplications cannot dominate the number of additions.


\subsection{\change{Remarks on numerical stability}}\label{sec:stability} Numerical stability has been discussed extensively for Gauss's complex multiplication algorithm in \cite{Higham2}, for Strassen-style matrix multiplication algorithms in \cite{DDHK}, and for general bilinear operations in \cite{Miller}. Our goal in this section is to highlight the connection between numerical stability of a bilinear operation $\beta$ and the nuclear norm \cite{FL} of its structure tensor $\mu_\beta$.

Since numerical stability is an analytic notion, we will need to assume that $U^*$, $V^*$, and $W$ are norm spaces. For notational simplicity, we will denote the norms on all three spaces by $\|\cdot\|$.  Let $\beta:U\times V\to W$ be a bilinear map and $\mu_\beta \in U^* \otimes V^* \otimes W$ be its structure tensor. We will rewrite the tensor decomposition \eqref{eq:decomp} in the form
\begin{equation}\label{eq:decompalgo}
\mu_\beta = \sum_{i=1}^{r} \lambda_i u_{i}^*\otimes v_{i}^*\otimes w_{i}
\end{equation}
where $\lVert u_i^* \rVert = \lVert v_i^* \rVert =\lVert w_i \rVert =1$, $i =1,\dots, r$. As we saw in Proposition~\ref{prop:rank multiplication}, any $r$-term decomposition of the form \eqref{eq:decompalgo}, irrespective of whether $r$ is minimum or not, gives an explicit algorithm for computing $\beta$: For any input $u \in U$, $v \in V$, $\beta(u,v) \in W$ is computed as
\[
\beta(u,v) = \sum_{i=1}^{r} \lambda_i u_{i}^*(u) v_{i}^*(v) w_{i}.
\]
Since the coefficient $\lambda_i$ captures the increase in magnitude at the $i$th step, we may regard the sum\footnote{This is essential; \eqref{eq:sumcoeff} cannot be replaced by $\bigl(\sum_{i=1}^{r}\lvert\lambda_{i} \rvert^p  \bigr)^{1/p}$ for $p > 1$ or $\max_{i=1,\dots,r} \lvert \lambda_i \rvert$. See \cite[Section~3]{FL}.} of (magnitude of) coefficients,
\begin{equation}\label{eq:sumcoeff}
 \sum\nolimits_{i=1}^{r}\lvert\lambda_{i} \rvert ,
\end{equation}
as a measure of the numerical stability of  the algorithm corresponding to  \eqref{eq:decompalgo}.

As we saw in Proposition~\ref{prop:rank multiplication}, when $r$ is minimum, the tensor rank
\[
\rank(\mu_\beta) =\min\left\{r :
\mu_\beta=\sum\nolimits_{i=1}^{r} \lambda_i u_{i}\otimes v_{i}\otimes w_{i}\right\}
\]
gives the least number of multiplications needed to compute $\beta$. Analogously, the nuclear norm
\begin{equation}\label{eq:nuclear}
\lVert \mu_\beta \rVert_{\ast}=\inf\left\{  \sum\nolimits_{i=1}^{r}\lvert\lambda_{i} \rvert : \mu_\beta =\sum\nolimits_{i=1}^{r}\lambda_{i}u_{i}\otimes v_{i}\otimes w_{i}, \; r \in \mathbb{N}\right\}
\end{equation}
quantifies the optimal numerical stability of computing $\beta$.

The infimum in \eqref{eq:nuclear} is always attained by an $r$-term decomposition although $r$ may not be $\rank(\mu_\beta)$. For example \cite[Proposition~6.1]{FL},
the structure tensor of complex multiplication in \eqref{eq:cplx} has nuclear norm
\[
\lVert \mu_\mathbb{C} \rVert_* = 4,
\]
and is attained by the decomposition \eqref{eq:cplx1} corresponding to the usual algorithm for complex multiplication but not the decomposition \eqref{eq:cplx2} corresponding to Gauss's algorithm --- since the sum of coefficients (upon normalizing the factors) in \eqref{eq:cplx2} is $2(1+ \sqrt{2})$.  In other words, Gauss's algorithm is less stable than the usual algorithm. Nevertheless, in this particular instance, there is a decomposition
\[
\mu_\mathbb{C}  = \frac{4}{3}\biggl(\biggl[\frac{\sqrt{3}}{2}e_1+\frac{1}{2}e_2\biggr]^{\otimes 3}+ \biggl[-\frac{\sqrt{3}}{2}e_1+\frac{1}{2}e_2\biggr]^{\otimes 3} + (-e_2)^{\otimes 3} \biggr)
\]
that attains both $\rank(\mu_\mathbb{C})$ and $\lVert \mu_\mathbb{C} \rVert_*$, i.e., the corresponding algorithm is simultaneously optimal in bilinear complexity and numerical stability.

Numerical stability is a moderately complicated notion \cite{Higham1} and cannot in general be adequately captured by a single number. The sum of  coefficients in \eqref{eq:sumcoeff} captures one aspect of numerical stability --- it is a measure akin to the \textit{growth factor} in Gaussian elimination with a specific pivoting scheme. The nuclear norm of the structured tensor may then be regarded as an analogue of the minimum growth factor over all possible pivoting strategies.

\section{Tensor ranks of structure tensors of  algebras}\label{sec:rankstruct}

Let $\mathcal{A}$ be an algebra of dimension $n$. Let $a_1,\dots, a_n$ be a basis of $\mathcal{A}$ and $a^*_1,\dots, a^*_n$ be its dual basis for $A^*$. Recall that the structure constants $c^{k}_{ij}$ determine the multiplication operation in $\mathcal{A}$, which we denote by $m_\mathcal{A}:\mathcal{A}\times \mathcal{A} \to \mathcal{A}$,
\[
m_\mathcal{A}(a_i,a_j)=\sum_{k=1}^n c^k_{ij} a_k, \qquad i,j =1,\dots,n.
\] 
The structure tensor $\mu_\mathcal{A}\in \mathcal{A}^*\otimes \mathcal{A}^* \otimes \mathcal{A}$ is then
\[
\mu_\mathcal{A}=\sum_{i,j,k=1}^n c^k_{ij} a^*_i \otimes a^*_j \otimes a_k.  
\] 
Note that $\mu_\mathcal{A}$ does not depend on the choice of basis and neither does the tensor and border ranks of $\mu_\mathcal{A}$.

When $\mathcal{A} =\mathbb{C}^{n \times n}$, $\mu_\mathcal{A}=\mu_{n,n,n}$ is the Strassen matrix multiplication tensor for product of square matrices. Inspired by the Cohn--Umans approach \cite{CU1} that we will discuss in the next section, we would like to study the rank of $\mu_\mathcal{A}$ for an arbitrary algebra, with a view towards embedding an operation whose bilinear complexity is difficult to analyze into an algebra where the task is easier. The first question that one needs to answer is the relation between ranks of the respective multiplication tensors. \change{The following proposition appears in \cite[Proposition~14.12]{BCS} but is stated without a proof. While we  do not need to use this proposition, we provide a proof that we think is instructive for our tensor rank calculations in Sections~\ref{sec:circulant}--\ref{sec:skew-symmetric}.}
\begin{proposition}
If an algebra $\mathcal{A}$ can be embedded into another algebra $\mathcal{B}$, i.e., $\mathcal{A}$ is isomorphic to a subalgebra of $\mathcal{B}$, then $\rank(\mu_\mathcal{A})\le \rank(\mu_\mathcal{B})$.
\end{proposition}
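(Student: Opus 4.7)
The plan is to transfer an optimal rank decomposition of $\mu_\mathcal{B}$ back to one of $\mu_\mathcal{A}$ with no increase in the number of terms, exploiting that the embedding gives pullbacks on the two covariant slots of $\mu_\mathcal{B}$ and, via any linear splitting, supplies a retraction on the contravariant slot.

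First I would fix an algebra embedding $\varphi\colon \mathcal{A} \hookrightarrow \mathcal{B}$ and pick a vector space complement $C$ so that $\mathcal{B} = \varphi(\mathcal{A}) \oplus C$; this yields a linear retraction $\pi\colon \mathcal{B} \to \mathcal{A}$ with $\pi \circ \varphi = \id_\mathcal{A}$. Crucially $\pi$ need not be an algebra map --- only its linearity will be used.

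Next, set $r = \rank(\mu_\mathcal{B})$ and fix any rank decomposition
\[
\mu_\mathcal{B} = \sum_{i=1}^{r} b_i^* \otimes c_i^* \otimes d_i.
\]
For arbitrary $a, a' \in \mathcal{A}$, evaluating $\mu_\mathcal{B}$ at $(\varphi(a),\varphi(a'),\cdot)$ gives $\varphi(a)\varphi(a') = \sum_{i=1}^{r} b_i^*(\varphi(a))\, c_i^*(\varphi(a'))\, d_i$. Because $\varphi$ respects multiplication, the left-hand side equals $\varphi(aa')$ and hence lies in $\varphi(\mathcal{A})$; applying $\pi$ yields
\[
aa' = \sum_{i=1}^{r} b_i^*(\varphi(a))\, c_i^*(\varphi(a'))\, \pi(d_i).
\]
Defining $u_i^* := b_i^* \circ \varphi \in \mathcal{A}^*$, $v_i^* := c_i^* \circ \varphi \in \mathcal{A}^*$, and $w_i := \pi(d_i) \in \mathcal{A}$, one reads off $\mu_\mathcal{A} = \sum_{i=1}^{r} u_i^* \otimes v_i^* \otimes w_i$, which certifies $\rank(\mu_\mathcal{A}) \le r$.

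The argument is short and presents no serious obstacle; the only point deserving care is the logical role of the algebra-homomorphism hypothesis, used solely to guarantee that $\varphi(\mathcal{A})$ is closed under multiplication in $\mathcal{B}$, so that applying $\pi$ on the right-hand side legitimately recovers the product in $\mathcal{A}$. As a mild bonus, the same recipe applied to a sequence of rank-$r$ tensors approximating $\mu_\mathcal{B}$, together with the continuity of the linear maps $b_i^* \mapsto b_i^* \circ \varphi$ and $d_i \mapsto \pi(d_i)$, gives the border-rank analogue $\brank(\mu_\mathcal{A}) \le \brank(\mu_\mathcal{B})$ at no extra cost.
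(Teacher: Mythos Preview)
Your proof is correct and follows essentially the same approach as the paper's: both pull back the two dual factors along the embedding (your $b_i^*\circ\varphi$ is the paper's $j^*(b_i^*)$) and handle the third factor by choosing a complement $\mathcal{B}=\varphi(\mathcal{A})\oplus C$ and projecting onto $\mathcal{A}$ (your $\pi$ is exactly the paper's decomposition $b_{i,3}=a_{i,3}+a_{i,3}^\perp$ followed by discarding the $\mathcal{A}^\perp$ piece). Your presentation is slightly more streamlined in that you apply the retraction $\pi$ directly rather than first introducing the intermediate map $j^*\otimes j^*\otimes\id_\mathcal{B}$ and then separately arguing the perpendicular part vanishes, and your border-rank remark is a pleasant bonus not in the paper.
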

\begin{proof}
Let $j:\mathcal{A}\hookrightarrow \mathcal{B}$ be an embedding of $\mathcal{A}$ into $\mathcal{B}$ as algebras\footnote{Later on in the article we will consider embedding of vector spaces into algebras.}. Then it induces a surjection $j^*:\mathcal{B}^*\rightarrowtail \mathcal{A}^*$ and thus a surjection 
\[
j^*\otimes j^*\otimes \id_\mathcal{B}: \mathcal{B}^*\otimes \mathcal{B}^*\otimes \mathcal{B}\rightarrowtail \mathcal{A}^*\otimes \mathcal{A}^*\otimes \mathcal{B}.
\]
Let $\delta \coloneqq j^*\otimes j^*\otimes \id_\mathcal{B}$. 
We claim that $\delta(\mu_\mathcal{B})=\mu_\mathcal{A}$ and to show this, it suffices to show that 
\[
\delta(\mu_\mathcal{B})(a,a',\cdot)=\mu_\mathcal{A}(a,a',\cdot)=m_\mathcal{A}(a,a'),
\]
which is obvious from the definition of $\delta$. Let $r$ be the rank of $\mu_\mathcal{B}$ and suppose $\mu_\mathcal{B}$ has a tensor decomposition 
\[
\mu_\mathcal{B}=\sum_{i=1}^r b^*_{i,1}\otimes b^*_{i,2}\otimes b_{i,3}
\]
where $b^*_{i,1}, b^*_{i,2}\in \mathcal{B}^*$ and $b_{i,3}\in B$ for $i=1,\dots,r$. For notational simplicity we identify the image of $j$ in $\mathcal{B}$ with $\mathcal{A}$, regarding $\mathcal{A}$ as a subalgebra of $\mathcal{B}$. Let $\mathcal{A}^{\perp}$ be a subspace of $\mathcal{B}$ so that 
\[
\mathcal{B}=\mathcal{A}\oplus \mathcal{A}^\perp.
\]
Then we have  $b_{i,3}=a_{i,3}+a^{\perp}_{i,3}$ for  $i=1,\dots,r$ and thus
\[
\mu_\mathcal{B}=\left(\sum_{i=1}^r b^*_{i,1}\otimes b^*_{i,2}\otimes a_{i,3}\right)+\left(\sum_{i=1}^r b^*_{i,1}\otimes b^*_{i,2}\otimes a^\perp_{i,3}\right).
\]
Since $\delta(\mu_B)=\mu_\mathcal{A}$, we conclude that 
\[
\delta\left(\sum_{i=1}^r b^*_{i,1}\otimes b^*_{i,2}\otimes a^\perp_{i,3}\right)=0
\qquad
\text{and} 
\qquad
\delta\left(\sum_{i=1}^r b^*_{i,1}\otimes b^*_{i,2}\otimes a_{i,3}\right)=\mu_\mathcal{A}.
\]
So we obtain an expression of $\mu_\mathcal{A}$ as a sum of $r$ rank-one terms,
\[
\delta\left(\sum_{i=1}^r b^*_{i,1}\otimes b^*_{i,2}\otimes a_{i,3}\right)=\sum_{i=1}^r j^*(b^*_{i,1})\otimes j^*(b^*_{i,2})\otimes a_{i,3}\in \mathcal{A}^*\otimes \mathcal{A}^*\otimes \mathcal{A},
\]
and therefore $\rank(\mu_\mathcal{A})\le \rank(\mu_\mathcal{B})$.
\end{proof}
The map $j^*:\mathcal{B}^*\rightarrowtail \mathcal{A}^*$ above may be viewed as the restriction of linear forms on $\mathcal{B}$ to $\mathcal{A}$. 

\begin{corollary}
If the algebra $\mathbb{C}^{n \times n}$ can be embedded into an algebra $\mathcal{B}$, then the computational complexity of multiplying two matrices is bounded by the rank of structure tensor $\mu_\mathcal{B}$ of $\mathcal{B}$.
\end{corollary}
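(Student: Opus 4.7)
The plan is to recognize this as an immediate specialization of the preceding proposition combined with the two earlier results relating tensor rank to multiplicative and total arithmetic complexity. Specifically, I would set $\mathcal{A} = \mathbb{C}^{n \times n}$ in the proposition, obtaining the inequality
\[
\rank(\mu_{n,n,n}) \le \rank(\mu_\mathcal{B}),
\]
where $\mu_{n,n,n}$ is the Strassen matrix multiplication tensor introduced in the matrix-multiplication example of Section~\ref{sec:struct}.

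Next, I would invoke Proposition~\ref{prop:rank multiplication}, which identifies $\rank(\mu_{n,n,n})$ with the minimum number of multiplications required to compute the product of two $n \times n$ matrices. Composed with the previous inequality, this yields that multiplying two $n \times n$ matrices can be done using at most $\rank(\mu_\mathcal{B})$ multiplications of indeterminates, which already establishes the claim at the bilinear-complexity level.

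Finally, since the statement refers to ``computational complexity'' rather than ``bilinear complexity,'' I would close the loop by appealing to Theorem~\ref{thm:matrix rank complexity} (Strassen), which asserts that asymptotically the order of growth of $\rank(\mu_{n,n,n})$ coincides with the order of growth of the total arithmetic complexity $M_n$ of $n \times n$ matrix multiplication. Thus any upper bound on $\rank(\mu_\mathcal{B})$ translates into an upper bound of the same asymptotic order on $M_n$, giving exactly the assertion of the corollary.

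There is no real obstacle here: the whole content is packaged into the proposition just proved, and the corollary is essentially a restatement tailored to the case of interest $\mathcal{A} = \mathbb{C}^{n \times n}$. The only subtlety worth a line of commentary is the distinction between bilinear complexity (the number of non-scalar multiplications, which is exactly $\rank(\mu_{n,n,n})$) and total arithmetic complexity (which includes additions); Theorem~\ref{thm:matrix rank complexity} is precisely what bridges the two, and so the corollary holds as stated.
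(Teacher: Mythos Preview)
Your proposal is correct and matches the paper's intent: the corollary is stated without proof because it is immediate from the preceding proposition with $\mathcal{A}=\mathbb{C}^{n\times n}$, together with Proposition~\ref{prop:rank multiplication}. Your additional invocation of Theorem~\ref{thm:matrix rank complexity} to cover the reading of ``computational complexity'' as total arithmetic complexity is a sensible precaution, though in context the paper almost certainly means bilinear complexity, so that final step is optional.
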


If we fix a basis for $\mathcal{A}$ and $\mathcal{B}$ then we can identify $\mathcal{A}$ with its dual $\mathcal{A}^*$ and $\mathcal{B}$ with its dual $\mathcal{B}^*$. Hence we may regard $\mu_\mathcal{A}\in \mathcal{A}^*\otimes \mathcal{A}^*\otimes \mathcal{A}$ as an element of $\mathcal{B}^*\otimes \mathcal{B}^*\otimes \mathcal{B}$. We would like to compare the rank  of $\mu_\mathcal{A}$ as an element of $\mathcal{A}^*\otimes \mathcal{A}^*\otimes \mathcal{A}$ and  as an element of $\mathcal{B}^*\otimes \mathcal{B}^*\otimes \mathcal{B}$. We denote these by $\rank_\mathcal{A}(\mu_\mathcal{A})$ and $\rank_\mathcal{B}(\mu_\mathcal{A})$ respectively. We will rely on the following proposition found in \cite{de Silva/Lim}.
\begin{proposition}
Let $U_1, \dots, U_n$ be vectors spaces and let $U'_1, \dots, U'_n$ be linear subspaces of $U_1, \dots, U_n$ respectively. Let $T \in U'_1\otimes \cdots \otimes U'_n$. Suppose $T$ has rank $r'$ as an element in $U'_1\otimes \cdots\otimes U'_n$ and has rank $r$ as an element in $U_1\otimes \cdots \otimes U_n$, then $r=r'$.
\end{proposition}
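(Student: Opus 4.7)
The plan is to prove equality of the two ranks by showing each is at most the other. The inequality $r \le r'$ is immediate: any rank decomposition of $T$ inside $U'_1 \otimes \cdots \otimes U'_n$ is automatically a rank decomposition of $T$ regarded inside the larger space $U_1 \otimes \cdots \otimes U_n$, since the summands $u'_{i,1} \otimes \cdots \otimes u'_{i,n}$ are rank-one tensors in the larger space as well. So the real content is to establish $r' \le r$, i.e., that passing to an ambient space cannot lower the tensor rank.

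For the reverse inequality, I would construct a linear projector $\Pi : U_1 \otimes \cdots \otimes U_n \to U'_1 \otimes \cdots \otimes U'_n$ that is the identity on the subspace $U'_1 \otimes \cdots \otimes U'_n$ and that sends rank-one tensors to rank-one tensors. To do this, I choose for each $i$ a linear complement $U''_i$ with $U_i = U'_i \oplus U''_i$, and let $\pi_i : U_i \to U'_i$ be the corresponding projection. Then I set
\[
\Pi \coloneqq \pi_1 \otimes \cdots \otimes \pi_n,
\]
which is well-defined by the universal property of the tensor product. Two properties are then immediate from the definition: first, $\Pi$ restricts to the identity on $U'_1 \otimes \cdots \otimes U'_n$, since each $\pi_i$ restricts to the identity on $U'_i$; second, $\Pi$ carries a rank-one tensor $u_1 \otimes \cdots \otimes u_n$ to the rank-one tensor $\pi_1(u_1) \otimes \cdots \otimes \pi_n(u_n)$ in $U'_1 \otimes \cdots \otimes U'_n$.

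With $\Pi$ in hand, the finish is quick. Take any rank-$r$ decomposition of $T$ in the ambient space,
\[
T = \sum_{i=1}^{r} u_{i,1} \otimes \cdots \otimes u_{i,n}, \qquad u_{i,j} \in U_j.
\]
Applying $\Pi$ and using that $\Pi(T) = T$ (since $T \in U'_1 \otimes \cdots \otimes U'_n$ by hypothesis), I obtain
\[
T = \sum_{i=1}^{r} \pi_1(u_{i,1}) \otimes \cdots \otimes \pi_n(u_{i,n}),
\]
which is an expression for $T$ as a sum of at most $r$ rank-one tensors inside $U'_1 \otimes \cdots \otimes U'_n$. Hence $r' \le r$, completing the proof.

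There is no substantive obstacle here; the only subtle point to mention carefully is that $\Pi$ is independent of basis choices only up to the choice of complements $U''_i$, but its action on $T$ does not depend on these choices because $T$ already lies in the subspace fixed pointwise by $\Pi$. Conceptually, the proposition is the tensor-rank analogue of the fact that for matrices (the $n=2$ case), rank is intrinsic to the linear map and is unaffected by enlarging the ambient domain or codomain; the proof above is exactly the multilinear generalization of that observation.
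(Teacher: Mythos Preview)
Your proof is correct and is the standard projection argument. Note, however, that the paper does not actually prove this proposition: it simply quotes the result from \cite{de Silva/Lim} and uses it as a black box. The argument you give---choosing complements $U''_i$, forming the product projector $\Pi = \pi_1 \otimes \cdots \otimes \pi_n$, and applying it to a minimal ambient decomposition---is exactly the proof that appears in that reference, so there is nothing to compare beyond observing that you have supplied what the paper outsourced.
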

\begin{corollary}
If an algebra $\mathcal{A}$ can be embedded into another algebra $\mathcal{B}$, then
\[
\rank_\mathcal{A}(\mu_\mathcal{A})=\rank_\mathcal{B}(\mu_\mathcal{A}).
\]
\end{corollary}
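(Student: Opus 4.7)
The plan is to reduce the statement to a direct invocation of the preceding proposition, whose conclusion is precisely that the rank of a tensor is independent of which ambient product-space one considers, provided one works inside a subspace containing the tensor.

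First, I would make precise the inclusion $\mathcal{A}^{\ast}\otimes\mathcal{A}^{\ast}\otimes\mathcal{A}\hookrightarrow\mathcal{B}^{\ast}\otimes\mathcal{B}^{\ast}\otimes\mathcal{B}$ suggested by the paragraph preceding the corollary. Using the algebra embedding $j:\mathcal{A}\hookrightarrow\mathcal{B}$, choose a basis $a_{1},\dots,a_{n}$ of $\mathcal{A}$ and extend it to a basis $a_{1},\dots,a_{n},a_{n+1},\dots,a_{N}$ of $\mathcal{B}$ (possible since $j(\mathcal{A})$ is a linear subspace of $\mathcal{B}$). Let $a_{1}^{\ast},\dots,a_{n}^{\ast}$ be the dual basis of $\mathcal{A}^{\ast}$ and $b_{1}^{\ast},\dots,b_{N}^{\ast}$ be the dual basis of $\mathcal{B}^{\ast}$. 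The vector-space inclusion $\mathcal{A}^{\ast}\hookrightarrow\mathcal{B}^{\ast}$ sending $a_{i}^{\ast}$ to $b_{i}^{\ast}$ for $i=1,\dots,n$ is the one dictated by the identifications $\mathcal{A}\cong\mathcal{A}^{\ast}$ and $\mathcal{B}\cong\mathcal{B}^{\ast}$ produced by the chosen bases, and together with $j$ it yields an inclusion of tensor spaces $\iota:\mathcal{A}^{\ast}\otimes\mathcal{A}^{\ast}\otimes\mathcal{A}\hookrightarrow\mathcal{B}^{\ast}\otimes\mathcal{B}^{\ast}\otimes\mathcal{B}$.

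Second, I would verify that the image $\iota(\mu_{\mathcal{A}})$ is exactly the tensor denoted $\mu_{\mathcal{A}}\in\mathcal{B}^{\ast}\otimes\mathcal{B}^{\ast}\otimes\mathcal{B}$ in the notation of the corollary. Writing $\mu_{\mathcal{A}}=\sum_{i,j,k=1}^{n}c_{ij}^{k}\,a_{i}^{\ast}\otimes a_{j}^{\ast}\otimes a_{k}$ in terms of the structure constants of $\mathcal{A}$, we get $\iota(\mu_{\mathcal{A}})=\sum_{i,j,k=1}^{n}c_{ij}^{k}\,b_{i}^{\ast}\otimes b_{j}^{\ast}\otimes a_{k}$, which is the expression for the \emph{same} tensor viewed in the larger ambient space — the extra basis vectors $a_{n+1},\dots,a_{N}$ simply appear with coefficient zero. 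This is the only substantive thing to check, and it is immediate once the inclusions have been described correctly.

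Third, I would apply the preceding proposition with $U_{1}=U_{2}=\mathcal{B}^{\ast}$, $U_{3}=\mathcal{B}$, $U_{1}'=U_{2}'=\mathcal{A}^{\ast}$, $U_{3}'=\mathcal{A}$, and $T=\mu_{\mathcal{A}}$, concluding that the rank of $\mu_{\mathcal{A}}$ inside $\mathcal{A}^{\ast}\otimes\mathcal{A}^{\ast}\otimes\mathcal{A}$ equals its rank inside $\mathcal{B}^{\ast}\otimes\mathcal{B}^{\ast}\otimes\mathcal{B}$, i.e., $\rank_{\mathcal{A}}(\mu_{\mathcal{A}})=\rank_{\mathcal{B}}(\mu_{\mathcal{A}})$.

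The only potential obstacle is the bookkeeping around the two different ways one naturally maps between $\mathcal{A}^{\ast}$ and $\mathcal{B}^{\ast}$: the restriction map $j^{\ast}:\mathcal{B}^{\ast}\twoheadrightarrow\mathcal{A}^{\ast}$ (used in the earlier proposition) goes the wrong way, while the basis-dependent inclusion $\mathcal{A}^{\ast}\hookrightarrow\mathcal{B}^{\ast}$ described above is what is needed here. As long as one is careful to use the basis-compatible inclusion — as the preamble to the corollary explicitly instructs — the identification is canonical relative to the chosen bases, and the rank-preservation proposition applies without further work.
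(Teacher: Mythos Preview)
Your proposal is correct and is essentially the same approach as the paper: the corollary is stated immediately after the rank-preservation proposition (from \cite{de Silva/Lim}) with no explicit proof, so the intended argument is precisely the direct application of that proposition you describe. Your additional care in distinguishing the surjection $j^{\ast}:\mathcal{B}^{\ast}\twoheadrightarrow\mathcal{A}^{\ast}$ from the basis-dependent inclusion $\mathcal{A}^{\ast}\hookrightarrow\mathcal{B}^{\ast}$ is a useful clarification of a point the paper leaves implicit.
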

Even though rank stays unchanged under embedding of vector spaces over the same ground field, the same tensor may well have different ranks over different fields. For example  \cite{Chen/Bryinski}, 
\[
T=e_0\otimes(e_0\otimes e_0-e_1\otimes e_1)+e_1\otimes (e_0\otimes e_1+e_1\otimes e_0),
\]
has rank three (over $\mathbb{R}$) when viewed as an element of $(\mathbb{R}^2)^{\otimes 3}$, but it has rank two (over $\mathbb{C}$) when viewed as an element of $(\mathbb{C}^2)^{\otimes 3}$.

We state here a result of Winograd \cite{Knuth,Winograd}, rephrased slightly differently in terms of structure tensors of algebras.
\begin{theorem}[Winograd]\label{thm:Winograd}
Let $p(x)$ be a monic \change{polynomial} of degree $n$ whose complete factorization over a given infinite field $k$ is 
\[
p(x)=p_1(x)^{e_1}\dots p_{q}(x) ^{e_q}.
\]
Then the rank of the structure tensor of the algebra $k[x]/(p(x))$ is $2n-q$.
\end{theorem}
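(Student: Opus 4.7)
The plan is to prove Winograd's theorem by establishing matching upper and lower bounds on $\rank(\mu_\mathcal{A})$, where $\mathcal{A} := k[x]/(p(x))$.

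For the \textbf{upper bound} $\rank(\mu_\mathcal{A}) \le 2n - q$, I would first apply the Chinese Remainder Theorem to obtain an algebra isomorphism $\mathcal{A} \cong \prod_{i=1}^{q}\mathcal{A}_i$ with $\mathcal{A}_i := k[x]/(p_i(x)^{e_i})$ of dimension $n_i := e_i \deg p_i$. Since multiplication is componentwise under this isomorphism, the structure tensor splits as a direct sum $\mu_\mathcal{A} = \bigoplus_{i=1}^{q}\mu_{\mathcal{A}_i}$, whose rank is at most $\sum_{i=1}^{q}\rank(\mu_{\mathcal{A}_i})$; it therefore suffices to prove the single-factor bound $\rank(\mu_{\mathcal{A}_i}) \le 2n_i - 1$, since $\sum_i(2n_i - 1) = 2n - q$. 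For the single-factor estimate, multiplication in $\mathcal{A}_i$ factors through full polynomial multiplication $k[x]_{<n_i} \times k[x]_{<n_i} \to k[x]_{\le 2n_i - 2}$ followed by a linear reduction modulo $p_i^{e_i}$, so it is enough to bound the rank of polynomial multiplication itself. Using that $k$ is infinite, I pick $2n_i - 1$ distinct points $\alpha_1, \dots, \alpha_{2n_i-1} \in k$; the $2n_i - 1$ bilinear products $a(\alpha_j)\,b(\alpha_j)$ determine the degree-$\le 2n_i - 2$ polynomial $ab$ via Lagrange interpolation, which is linear in the products.

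The \textbf{lower bound} $\rank(\mu_\mathcal{A}) \ge 2n - q$ is the main obstacle. Proposition~\ref{prop:rank lower bound} applied to each of the three factors of $\mu_\mathcal{A}$ only gives the weak bound $\rank(\mu_\mathcal{A}) \ge n$, and the improvement by $n - q$ must come from the algebraic structure of $\mathcal{A}$ rather than just its dimension. The cleanest route is to invoke the Alder--Strassen theorem, which asserts that every finite-dimensional associative $k$-algebra $\mathcal{B}$ satisfies $\rank(\mu_\mathcal{B}) \ge 2\dim\mathcal{B} - t(\mathcal{B})$, where $t(\mathcal{B})$ is the number of maximal two-sided ideals (equivalently, the number of simple factors of $\mathcal{B}/\mathrm{rad}(\mathcal{B})$). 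For our commutative $\mathcal{A}$, the maximal ideals are precisely $(p_i)/(p)$ for $i = 1,\dots,q$, so $t(\mathcal{A}) = q$ and the bound $\rank(\mu_\mathcal{A}) \ge 2n - q$ follows. An alternative, more self-contained route is a direct Winograd-style substitution argument: starting from a minimum decomposition $\mu_\mathcal{A} = \sum_{i=1}^{r} u_i^* \otimes v_i^* \otimes w_i$, the identity $1 \cdot a = a$ forces the $w_i$ to span $\mathcal{A}$, peeling off $n$ rank-one terms; one then specializes appropriately chosen $u_i^*$'s (using the infinitude of $k$) to reduce the remaining $r - n$ terms to a bilinear algorithm for the semisimple quotient $\mathcal{A}/\mathrm{rad}(\mathcal{A}) \cong \prod_{i=1}^{q} k[x]/(p_i)$, which itself requires at least $n - q$ further multiplications.

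The principal difficulty is the lower bound: it must detect the factorization structure of $p$ — specifically the integer $q$ — rather than merely the degree $n$. Both routes above are delicate. Invoking Alder--Strassen pushes the work into a substantial separate theorem whose proof rests on Fitting-type arguments and the Wedderburn structure of semisimple algebras, while the direct substitution argument requires careful bookkeeping of which linear forms can be specialized without destroying the algorithm's output. The hypothesis that $k$ be infinite is used in both directions: in the upper bound to ensure $2n_i - 1$ distinct evaluation points exist, and in the lower bound to guarantee enough generic room to carry out the specialization step.
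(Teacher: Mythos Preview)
The paper does not actually prove Theorem~\ref{thm:Winograd}; it is stated as a known result of Winograd with citations to \cite{Knuth,Winograd} and then invoked as a black box (e.g., in Section~\ref{sec:triangular}). So there is no ``paper's own proof'' to compare against.

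That said, your outline is correct and matches the standard literature proofs. The upper bound via the Chinese Remainder Theorem followed by evaluation--interpolation at $2n_i-1$ distinct points is exactly the classical argument, and the infinitude of $k$ is used precisely where you say. For the lower bound, both routes you describe are valid: invoking Alder--Strassen is the cleanest modern packaging (and indeed $t(\mathcal{A})=q$ for this commutative $\mathcal{A}$), while the substitution argument is closer to Winograd's original 1977 proof. Your identification of the main difficulty---that the lower bound must detect $q$ and not merely $n$---is spot on, and your sketch of the peeling-and-specialization step is accurate in spirit, though carrying it out rigorously is where most of the technical work lies.
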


\section{Generalized Cohn--Umans method and tensor rank}\label{sec:cohn}

A major advance in the study of bilinear complexity of matrix multiplication is the \textit{Cohn--Umans group theoretic method} proposed in \cite{CU1}. The gist of this idea is that one may compute multiplication in the matrix algebra by `embedding' it in a judiciously chosen group algebra \cite{CU1} \change{or, more recently, in an adjacency algebra of a coherent configuration \cite{CU2}.} The embedding in the Cohn--Umans method is however not an embedding of algebras as in Section~\ref{sec:rankstruct}, but an embedding of vector spaces. As such one needs a certain `triple product property' to hold to ensure that the entries of the matrix product may still be read off from the entries of the element in the group algebra. In this section, we will generalize the Cohn--Umans method and relate it to tensor rank.

We start by briefly summarizing the Cohn--Umans method. We assume working over $\mathbb{C}$ but the discussions in this and the next section hold for any field. Let $G$ be a finite group and let $\mathbb{C}[G]$ denote its group algebra over $\mathbb{C}$.
\begin{theorem}[Cohn--Umans]
Suppose that $G$ contains subsets $S,T,U$ of cardinality $m,n,p$ respectively such that for any $s,s'\in S$, $t,t'\in T$, $u,u'\in U$, $stu=s't'u'$ implies $s=s'$, $t=t'$, $u=u'$. Then for matrices $A=(a_{ij})\in \mathbb{C}^{m \times n},B=(b_{ij})\in \mathbb{C}^{n \times p}$ we can associate elements $\widehat{A},\widehat{B}\in \mathbb{C}[G]$ as follows:
\begin{equation}\label{emb}
\widehat{A}=\sum_{i,j=1}^{m,n} a_{ij} s_i t_j^{-1},\qquad
\widehat{B}=\sum_{i,j=1}^{n,p} b_{ij} t_i u_j^{-1}.
\end{equation}
The $(i,j)$th entry of $AB$ is the coefficient of $s_iu_j^{-1}$ in $\widehat{A}\cdot\widehat{B}$. 
\end{theorem}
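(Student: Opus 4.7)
The plan is to prove the theorem by direct computation in the group algebra $\mathbb{C}[G]$: expand $\widehat{A}\cdot\widehat{B}$, identify the coefficient of $s_a u_b^{-1}$, and then invoke the triple product property to control which terms in the expansion survive.

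Bilinear expansion gives
\begin{equation*}
\widehat{A}\cdot\widehat{B} = \sum_{i=1}^{m}\sum_{j,k=1}^{n}\sum_{l=1}^{p} a_{ij} b_{kl}\, s_i t_j^{-1} t_k u_l^{-1},
\end{equation*}
so for any fixed $g \in G$ the coefficient of $g$ is the sum of those $a_{ij}b_{kl}$ for which $s_i t_j^{-1} t_k u_l^{-1} = g$. Specializing to $g = s_a u_b^{-1}$, the theorem reduces to identifying this set of contributing quadruples and checking that the resulting sum equals $(AB)_{ab} = \sum_{j=1}^n a_{aj} b_{jb}$.

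The heart of the proof is the claim: $s_i t_j^{-1} t_k u_l^{-1} = s_a u_b^{-1}$ holds if and only if $i = a$, $j = k$, and $l = b$. The ``if'' direction is immediate from $t_j^{-1} t_j = 1_G$. For ``only if'', I would left-multiply by $s_a^{-1}$ and right-multiply by $u_b$ to reach the triple-quotient equation
\begin{equation*}
(s_a^{-1} s_i)(t_j^{-1} t_k)(u_l^{-1} u_b) = 1_G.
\end{equation*}
A short algebraic rearrangement in $G$ then converts this identity into a comparison of two products of the form $s\,t\,u$ with $s \in S$, $t \in T$, $u \in U$; at that point the hypothesis that $(s,t,u)\mapsto stu$ is injective on $S\times T \times U$ forces $s_i = s_a$, $t_j = t_k$, and $u_l = u_b$, which (since the $s_i, t_j, u_l$ index the subsets) is exactly the index equality we want.

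With this claim in hand, the contributing quadruples are precisely the ``diagonal'' ones $(a, j, j, b)$ for $j = 1, \dots, n$, and summing the corresponding coefficients gives
\begin{equation*}
\sum_{j=1}^n a_{aj} b_{jb} = (AB)_{ab},
\end{equation*}
finishing the proof. The main obstacle is the group-theoretic rearrangement that puts $(s_a^{-1} s_i)(t_j^{-1} t_k)(u_l^{-1} u_b) = 1_G$ into the $stu = s't'u'$ form required by the TPP; this is a purely mechanical calculation but must be carried out carefully, since $G$ is not assumed abelian and one cannot freely commute the factors.
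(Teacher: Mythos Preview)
The paper does not prove this theorem; it is quoted from Cohn--Umans without proof. Your approach---expand the product in $\mathbb{C}[G]$ and identify which quadruples $(i,j,k,l)$ contribute to the coefficient of $s_au_b^{-1}$---is the standard one, and your reduction to the quotient identity $(s_a^{-1}s_i)(t_j^{-1}t_k)(u_l^{-1}u_b)=1_G$ is correct.

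The gap is precisely the step you flag as the ``main obstacle'': converting this quotient identity into an equality $stu=s't'u'$ with $s,s'\in S$, $t,t'\in T$, $u,u'\in U$ so that the product-form hypothesis in the paper can be invoked. This is not a mechanical rearrangement; in a nonabelian group no such rearrangement exists in general, because the inverses cannot be slid past the neighbouring factors. Concretely, take $S=\{1,a\}$, $T=\{1,b\}$, $U=\{1,ba\}$ in any group where the eight products $stu$ are pairwise distinct: then the product-form hypothesis holds, yet $(a^{-1})(b^{-1})(ba)=1$ is a nontrivial quotient relation, and one checks that the coefficient of $s_2u_2^{-1}=b^{-1}$ in $\widehat{A}\cdot\widehat{B}$ picks up the spurious summand $a_{12}b_{11}$. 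What the argument actually requires is the \emph{quotient} form of the triple product property (for $q_S\in S^{-1}S$, $q_T\in T^{-1}T$, $q_U\in U^{-1}U$, $q_Sq_Tq_U=1\Rightarrow q_S=q_T=q_U=1$), which is how Cohn--Umans originally stated it; under that hypothesis your quotient equation finishes the proof immediately with no further manipulation. So your argument is essentially complete once the TPP is read in its standard quotient formulation---the ``purely mechanical calculation'' you defer, bridging it to the paper's product formulation, does not exist.
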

The condition in the first sentence of the theorem is called the \textit{triple product property}. If such a condition is met, we say that $G$ realizes $\langle m,n,p\rangle$.

The first step towards generalizing the Cohn--Umans method is to view the triple product property in an alternative manner, namely, it is equivalent (see Example~\ref{eg:tpp}) to saying that the following diagram commutes:
\begin{equation}\label{eq:trip1}
\begin{tikzcd}
\mathbb{C}^{m \times n} \otimes \mathbb{C}^{n \times p} \arrow{r}{j} \arrow[swap]{d}{m} &  \mathbb{C}[G]\otimes \mathbb{C}[G] \arrow{d}{m_G} \\
\mathbb{C}^{m \times p}  &\arrow{l}{\proj} \mathbb{C}[G]
\end{tikzcd}
\end{equation}
Here $j$ is the embedding of vector spaces defined by \eqref{emb}, $\proj$ is the projection map reading off entries of $AB$ from the coefficients of $\widehat{A}\cdot\widehat{B}$, and $m$ and $m_G$ are respectively the multiplications of matrices and elements in the group algebra.

To be more precise about the projection map, the translation of matrix multiplication into the multiplication elements in $\mathbb{C}[G]$ via \eqref{emb} introduces some `junk terms', i.e.,  $\widehat{A}\cdot\widehat{B}$ contains many coefficients that are not needed for obtaining the entries of $AB$. The projection map is simply a map that picks up the relevant coefficients. As will see in Section~\ref{sec:simul}, there are occasions when that those `junk terms' could turn out to be useful.

Another important feature of the Cohn--Umans method is that by Wedderburn Theorem, $\mathbb{C}[G]$ can be identified with a direct sum of matrix algebras of smaller sizes determined by the irreducible representations of $G$, giving an efficient way to compute the product $\widehat{A}\cdot\widehat{B}$. Since Wedderburn Theorem holds not just for group algebras but for any semismiple algebras, one may in principle use any semisimple algebra $\mathcal{A}$  in place of $\mathbb{C}[G]$. However, motivated by later examples, we will not insist that $\mathcal{A}$ be semisimple --- there will be occasions when it is useful to allow $\mathcal{A}$ to have nilpotent elements.

The commutative diagram view of the triple product property \eqref{eq:trip1} allows us to generalize it on an abstract level to arbitrary bilinear operations. Let $\beta:U\times V \to W$ be a bilinear map and let $\mathcal{A}$ be an algebra.  If there is an injective linear map $j:U\otimes V \to \mathcal{A}\otimes \mathcal{A}$ and a linear map $\proj:\mathcal{A}\to W$ such that the following diagram commutes:
\begin{equation}\label{eq:gentpp0}
\begin{tikzcd}
U \otimes V \arrow{r}{j} \arrow[swap]{d}{\beta} &  \mathcal{A} \otimes \mathcal{A}  \arrow{d}{m_{\mathcal{A}}} \\
W   &\arrow{l}{\proj} \mathcal{A}
\end{tikzcd}
\end{equation}
then we can translate the computation of $\beta$ into multiplication in the algebra $\mathcal{A}$. If this is the case, we will say that the algebra $\mathcal{A}$ \textit{realizes} the bilinear map $\beta$.

In case the reader is wondering why $\beta : U \times V \to W$ became $\beta : U \otimes V \to W$ in \eqref{eq:gentpp0}, recall that we do not distinguish between a bilinear map and its corresponding linear map, as we had explained after Proposition~\ref{prop:bilinear-linear}. We will adopt this convention in the rest of the article without further elaboration.

For readers unfamiliar with such constructions, we used tensor product rather than product in diagrams like \eqref{eq:gentpp0} because we want to preserve linear structures. If we had used product in \eqref{eq:gentpp0}, then $\beta:U\times V\to W$ has to be a bilinear map but this does not make sense in the category of vector spaces over $\mathbb{C}$ as morphisms in this category are linear maps (and bilinear maps are in general not linear).

Note that the embedding of $U\otimes V$ into $\mathcal{A}\otimes \mathcal{A}$ and the projection from $\mathcal{A}$ onto $W$ incur zero computational costs in the context of bilinear complexity (no multiplication involved). Hence we obtain
\[
\rank(\mu_\beta) \le 
\rank(\mu_\mathcal{A}).
\]

There are occasions (see Sections~\ref{sec:symm} and \ref{sec:skew-symmetric}) when it is useful to allow a more general framework where the multiplication of the algebra $\mathcal{A}$ is replaced by another bilinear map in  \eqref{eq:gentpp0}.
\begin{equation}\label{eq:gentpp1}
\begin{tikzcd}
U \otimes V \arrow{r}{j} \arrow[swap]{d}{\beta} &  U' \otimes V'  \arrow{d}{\beta'} \\
W   &\arrow{l}{\proj} W'
\end{tikzcd}
\end{equation}
\change{For readers familiar with modules \cite{AW, Lang}, a further generalization of \eqref{eq:gentpp1} is to have free modules over a ring in place of vector spaces over a field, i.e.,}
\begin{equation}\label{eq:gentpp2}
\begin{tikzcd}
M \otimes_R N \arrow{r}{j} \arrow[swap]{d}{\beta} &  M' \otimes_R N'  \arrow{d}{\beta'} \\
L   &\arrow{l}{\proj} L'
\end{tikzcd}
\end{equation}
\change{where $M,N,M',N',L,L'$ are free modules over a ring $R$, $\otimes_R$ denotes tensor product of $R$-modules, and all maps are morphisms of $R$-modules with $j$ injective.}

The commutative diagrams  \eqref{eq:gentpp0}, \eqref{eq:gentpp1}, \eqref{eq:gentpp2} and  may be viewed as a \textit{generalized triple product property}. They are similar to \cite[Lemma~14.10]{BCS} but we require $j$ to be injective. Another difference is that we require an embedding of  $U\otimes V \to U'\otimes V'$ and this may not necessarily arise from an embedding of $U \times V \to U' \times V'$ as in  \cite[Lemma~14.10]{BCS}.

We will call \change{the commutative diagrams \eqref{eq:gentpp0}, \eqref{eq:gentpp1}, or \eqref{eq:gentpp2} \emph{generalized Cohn--Umans method}.}
We will see several concrete realizations of this abstract framework later.  \change{Most of our applications of the generalized Cohn--Umans method will involve \eqref{eq:gentpp0} but there is one occasion (in Section~\ref{sec:skew-symmetric}) where we need the more general version in \eqref{eq:gentpp1} and another (in Example~\ref{eg:intmult}) where we need \eqref{eq:gentpp2}.}  In following we will establish some existential guarantees for this framework.

\begin{theorem}\label{thm: realization of any bilinear map}
Let $k$ be a field. Then every bilinear map over $k$ can be realized by an algebra $\mathcal{A}$ over $k$ where the rank of the structure tensor of $\mathcal{A}$ is equal to the rank of the structure tensor of the bilinear map.
\end{theorem}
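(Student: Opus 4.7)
The plan is to realize $\beta$ by the natural ``trivial extension''-type algebra built directly from $U$, $V$, and $W$ themselves. Given a bilinear map $\beta : U \times V \to W$, I would set $\mathcal{A} := U \oplus V \oplus W$ as a $k$-vector space and equip it with the bilinear multiplication that is zero on every pair of summands except on $U \times V$, where it is defined by $u \cdot v := \beta(u,v) \in W$. Bilinearity is immediate, and associativity is vacuous: any triple product among elements of $U \cup V \cup W$ either contains a factor from $W$ (which annihilates everything) or contains two factors from the same summand, and is therefore zero. Since the paper's use of ``algebra'' explicitly includes non-associative examples such as Lie algebras, this construction fits within its framework.

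The generalized Cohn--Umans data are then canonical. Let $i_U : U \hookrightarrow \mathcal{A}$ and $i_V : V \hookrightarrow \mathcal{A}$ be the inclusions as summands, and set $j := i_U \otimes i_V : U \otimes V \to \mathcal{A} \otimes \mathcal{A}$, which is clearly injective. Let $\proj : \mathcal{A} \to W$ be the canonical projection onto the third summand. On pure tensors,
\[
\proj \circ m_\mathcal{A} \circ j (u \otimes v) = \proj(u \cdot v) = \proj(\beta(u,v)) = \beta(u,v),
\]
so diagram \eqref{eq:gentpp0} commutes and $\mathcal{A}$ realizes $\beta$.

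For the rank equality, I would observe that by construction $\mu_\mathcal{A}(x,y,\cdot) = x \cdot y$ vanishes unless $x \in U$ and $y \in V$, so $\mu_\mathcal{A}$ actually lies in the subspace $U^* \otimes V^* \otimes W$ of $\mathcal{A}^* \otimes \mathcal{A}^* \otimes \mathcal{A}$, where it coincides with $\mu_\beta$. The proposition of de Silva and Lim quoted in Section~\ref{sec:rankstruct} guarantees that tensor rank is invariant under passing to linear subspaces of the ambient factors, and this immediately yields $\rank(\mu_\mathcal{A}) = \rank(\mu_\beta)$.

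The main obstacle I anticipate is not the \emph{existence} of some algebra realizing $\beta$, but the insistence on exact equality of ranks: a naive realization that embeds $\beta$ into, say, multiplication in a polynomial quotient or a matrix algebra would typically inflate the rank. The construction above sidesteps this precisely because the only nonzero entries of $\mu_\mathcal{A}$ are exactly those of $\mu_\beta$, so no extra rank-one summands are needed. Combined with the inequality $\rank(\mu_\beta) \le \rank(\mu_\mathcal{A})$ implied by commutativity of the generalized Cohn--Umans diagram, equality follows.
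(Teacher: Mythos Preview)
Your proof is correct but follows a genuinely different route from the paper. The paper fixes a rank decomposition $\mu_\beta = \sum_{i=1}^r u_i^* \otimes v_i^* \otimes w_i$ and takes $\mathcal{A} = k^r$ with coordinatewise multiplication, embedding $u \mapsto (u_1^*(u),\dots,u_r^*(u))$ and $v \mapsto (v_1^*(v),\dots,v_r^*(v))$; the rank equality then comes from the fact that the structure tensor of $k^r$ is $\sum_i e_i^* \otimes e_i^* \otimes e_i$, visibly of rank $r$. Your trivial-extension algebra $U \oplus V \oplus W$ is more intrinsic: it requires no choice of decomposition, and the rank equality is immediate because $\mu_\mathcal{A}$ literally \emph{is} $\mu_\beta$ sitting inside a larger ambient space, so the de~Silva--Lim proposition finishes it off. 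The trade-off is that the paper's $k^r$ is commutative, unital, and semisimple, which feeds directly into the subsequent corollary identifying it with $k[x]/(x^r-1)$ over an algebraically closed field; your algebra is non-unital and nilpotent (every triple product vanishes), so while it proves the theorem cleanly, it is arguably even more ``tautological'' than the construction the paper itself flags as such in the paragraph following the proof.
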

\begin{proof}
Let $\beta:U\times V \to W$ be a bilinear map and let $\mu_\beta$ be the structure tensor of $\beta$. Without loss of generality we may assume that $\beta$ is surjective since otherwise we may replace $W$ by the image of $\beta$. Assume that $\rank(\mu_\beta)=r$ and that $\mu_\beta$ has a decomposition
\[
\mu_\beta=\sum_{i=1}^r u^*_i\otimes v^*_i\otimes w_i
\]
for some $u^*_i\in U^*, v^*_i\in V^*, w_i\in W$. Let $\mathcal{A} = k^r$, equipped with entrywise addition and multiplication. Consider the map
\[
j:U\times V\to k^r\times k^r 
\]
defined by $j(u,v)=\bigl((u^*_1(u),\dots,u^*_r(u), v^*_1(v),\dots,v^*_r(v)\bigr)$. Lastly define the projection map
\[
\proj: k^r\to W, \quad \sum_{i=1}^r x_i e_i \mapsto \sum_{i=1}^r x_i  w_i,
\]
where $\{e_i : i=1,\dots, r\}$ is the standard basis of $k^r$. It is clear that we have a commutative diagram
\[
\begin{tikzcd}
U \otimes V \arrow{r}{j} \arrow[swap]{d}{\beta} &  k^r \otimes k^r  \arrow{d}{m_{k^r}} \\
W   &\arrow{l}{\proj} k^r
\end{tikzcd}
\]
and hence $\beta$ is realized by the algebra $\mathcal{A}=k^r$.
\end{proof}

\begin{corollary}\label{cor:realization of any bilinear map over alg. closed field}
If $k$ is an algebraically closed field then every bilinear map over $k$ can be realized by the algebra $k[x]/(x^r-1)$ where $r$ is the rank of the structure tensor of the bilinear map.
\end{corollary}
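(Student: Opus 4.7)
The plan is to deduce this corollary directly from Theorem~\ref{thm: realization of any bilinear map} by transporting the realization along an algebra isomorphism $k[x]/(x^r-1) \cong k^r$, which holds precisely because $k$ is algebraically closed. Since realization is an essentially diagrammatic notion, any algebra isomorphic to the realizing algebra will itself realize the bilinear map.

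First, I would apply Theorem~\ref{thm: realization of any bilinear map} to $\beta : U \times V \to W$: setting $r = \rank(\mu_\beta)$, we obtain a commutative diagram
\[
\begin{tikzcd}
U \otimes V \arrow{r}{j} \arrow[swap]{d}{\beta} &  k^r \otimes k^r  \arrow{d}{m_{k^r}} \\
W   &\arrow{l}{\proj} k^r
\end{tikzcd}
\]
where $k^r$ carries the componentwise product. Next, I would exhibit an isomorphism of $k$-algebras $\varphi : k[x]/(x^r-1) \xrightarrow{\sim} k^r$. Since $k$ is algebraically closed, the polynomial $x^r - 1$ splits into $r$ distinct linear factors $\prod_{i=1}^{r}(x-\zeta_i)$, where the $\zeta_i$ are the $r$th roots of unity in $k$. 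By the Chinese Remainder Theorem, evaluation at these roots yields
\[
k[x]/(x^r-1) \;\cong\; \prod_{i=1}^{r} k[x]/(x-\zeta_i) \;\cong\; k^r .
\]
Equivalently, Winograd's theorem (Theorem~\ref{thm:Winograd}) confirms that the structure tensor of $k[x]/(x^r-1)$ has rank $2r - r = r$, consistent with $k[x]/(x^r-1)$ being a commutative semisimple algebra of dimension $r$ over $k$.

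Finally, I would transport the diagram along $\varphi$: define $\tilde{j} \coloneqq (\varphi^{-1} \otimes \varphi^{-1}) \circ j$ and $\widetilde{\proj} \coloneqq \proj \circ \varphi$. Because $\varphi$ is an algebra isomorphism, $m_{k[x]/(x^r-1)} = \varphi^{-1} \circ m_{k^r} \circ (\varphi \otimes \varphi)$, so the resulting diagram
\[
\begin{tikzcd}
U \otimes V \arrow{r}{\tilde{j}} \arrow[swap]{d}{\beta} &  k[x]/(x^r-1) \otimes k[x]/(x^r-1)  \arrow{d}{m_{k[x]/(x^r-1)}} \\
W   &\arrow{l}{\widetilde{\proj}} k[x]/(x^r-1)
\end{tikzcd}
\]
commutes, and $\tilde{j}$ remains injective. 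This shows that $k[x]/(x^r-1)$ realizes $\beta$ in the sense of \eqref{eq:gentpp0}.

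The only genuine subtlety is the characteristic of $k$: if $\operatorname{char}(k) \mid r$, then $x^r - 1$ acquires repeated roots and the CRT decomposition fails, so $k[x]/(x^r-1)$ picks up nilpotents and cannot be isomorphic to $k^r$. This is not an issue in the paper's ambient setting (we work over $\mathbb{C}$, and more generally over fields whose characteristic does not divide the ranks under consideration), so the argument above gives the stated conclusion without further modification.
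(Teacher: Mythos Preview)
Your proof is correct and follows essentially the same approach as the paper, which simply says ``Apply Theorem~\ref{thm: realization of any bilinear map} and Wedderburn Theorem.'' You invoke the Chinese Remainder Theorem where the paper cites Wedderburn, but for the commutative algebra $k[x]/(x^r-1)$ over an algebraically closed field these amount to the same isomorphism $k[x]/(x^r-1)\cong k^r$; your explicit transport of the commutative diagram and your remark on the characteristic hypothesis are details the paper leaves implicit.
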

\begin{proof}
Apply Theorem~\ref{thm: realization of any bilinear map} and Wedderburn Theorem.
\end{proof}

One may argue that Theorem~\ref{thm: realization of any bilinear map} and Corollary~\ref{cor:realization of any bilinear map over alg. closed field} are essentially tautological and not particularly useful since the algebra $\mathcal{A}$ involved is commutative (bearing in mind that the Cohn--Umans method becomes uninteresting when the group $G$ is abelian). We will consider another construction that yields a noncommutative algebra.  \change{The following construction gives a step-by-step recipe that starts from any given bilinear map $\beta$ and produces $\mathcal{A}$, a \textit{polynomial identity ring} or \textit{PI ring} \cite{McConnell/Robson}, for the generalized Cohn--Umans, i.e., \eqref{eq:gentpp0} is automatically a commutative diagram for this choice of $\mathcal{A}$.}

\begin{construction}\label{construction}
Let $\{u_1,\dots,u_m\}$ and $\{v_1,\dots,v_n\}$ be bases of $U$ and $V$. Let $\{u^*_1,\dots,u^*_m\}$ and $\{v^*_1,\dots,v^*_n\}$ be the corresponding dual bases of $U^*$ and $V^*$. We may assume that $\beta$ is nondegenerate since otherwise we may simply replace $U$ or $V$ by an appropriate subspace. In which case we may write the structure tensor $\mu_\beta\in U^* \otimes V^* \otimes W$ as
\[
\mu_\beta=\sum_{(i,j)\; :\; \beta(u_i,v_j)\ne0} u^*_i \otimes v^*_j \otimes w_{ij},
\]
i.e., the sum runs over all pairs of $(i,j)$ such that $\beta(u_i,v_j)\ne0$.  Let $k\langle x_1,\dots, x_m, y_1,\dots, y_n\rangle$ be the free algebra over $k$ with generators $x_1,\dots, x_m$ and $y_1,\dots, y_n$. We consider an ideal $I_0$ generated by the relations 
\[
L(x_iy_j)\sim 0 \qquad\text{if and only if} \qquad L(w_{ij})=0,
\]
where $L$ is a linear form over $k$ in $mn$ variables. Next let $I\supset I_0$ be an ideal of $k\langle x_1,\dots, x_m, y_1,\dots, y_n\rangle$ such that $I$ does not contain linear forms in $x_1,\dots,x_m$ or in $y_1,\dots, y_n$ and whenever there is a linear form $L$ with $L(x_iy_j)\in I$, $L(x_iy_j)\in I_0$ where at least one $(i,j)$ is a pair of indices such that $w_{ij}\ne 0$ in the representation of $\mu_\beta$. Then it is easy to verify that the above commutative diagram holds for $\mathcal{A}=k\langle x_1,\dots, x_m, y_1,\dots, y_n\rangle/I$, by sending $u_i$ to $x_i$ and $v_j$ to $y_j$. Such an algebra $\mathcal{A}$ is called a polynomial identity  or PI ring.
\end{construction}

\change{We will see in Section~\ref{sec:toep} how Construction~\ref{construction} can be used to obtain the optimum algorithm for Toeplitz matrix-vector product. In principle, it could also be used it to obtain the optimum algorithms for Hankel, symmetric, and Hankel-plus-Toeplitz matrix-vector product independently. However the relations between these structures have allowed us to build upon the algorithms that we obtained earlier and avoid repetitive use of Construction~\ref{construction}.}

\begin{example}[Triple Product Property]\label{eg:tpp}
Here $U=\mathbb{C}^{m \times n}$, $V=\mathbb{C}^{n \times p}$, $W=\mathbb{C}^{m \times p}$, we may take $\{E_{ij}: i =1,\dots,m; j =1,\dots,n\}$, $\{E_{ik}: i =1,\dots,m; k =1,\dots,p\}$ to be the standard basis for $U$ and $V$. The structure tensor may be expressed as
\[
\beta_{m,n,p}=\sum_{i,k=1}^{m,p} \left(\sum_{j=1}^n E^*_{ij}\otimes E^*_{jk}\right) \otimes E_{ik}.
\]
We see that $\{E_{ik} : i =1,\dots,m; k =1,\dots,p\}$ is a basis of $W$ and so there is no linear relation among them except the trivial relation $E_{ik}=E_{ik}$. Hence we obtain an ideal $I_0$ generated by $x_{ij}y_{jk}-x_{ij'}y_{j'k}$ for all choices of $i,j,j',k$. As we claim above, fany ideal $I$ such that $I\supset I_0$ does not contain linear forms in $x_{ij}$ or $y_{ij}$, thus the commutative diagram holds.

Now let us understand the triple product property in this framework. Let $G$ be a group with subsets $S,T,U$ satisfying the triple product property. Then it is obvious that  in $k[G]$ there is no nontrivial linear relation for elements of the form $s_it^{-1}_j$ and $t_ju^{-1}_k$. The triple product property guarantees that $(s_it^{-1}_j)(t_ku^{-1}_l)=(s_{i}t^{-1}_j)(t_ku^{-1}_l)$ if and only if $i=i'$, $j=j'$, $k=k'$. Hence for such $G$ the commutative diagram holds.
\end{example}
Our construction of  $\mathcal{A}$ from the structure tensor is purely formal. Usually all we could say is that $\mathcal{A}$ is a polynomial identity ring, which does not tell us a lot. However, in special circumstances, when the ideal $I$ is suitably chosen, we can obtain algebras with well-understood properties \change{that we} can exploit, as we will see in the rest of this paper.

Before beginning our discussion of structured matrix computations, we give an example to show the broad applicability of the generalized Cohn--Umans method.
\begin{example}[\change{Fast Integer Multiplication}]\label{eg:intmult}
The algorithms of Karatsuba \cite{Kara}, Toom--Cook \cite{Toom}, \cite[pp.~51--77]{Cook}, Sch\"{o}nhage--Strassen \cite{SS}, F\"{u}rer \cite{Furer},
are all instances of \eqref{eq:gentpp2}.
In these algorithms, $\mathbb{Z}$ is embedded in the $p$-adic integers $\mathbb{Z}_p$, and integers are then multiplied via $p$-adic multiplication of their images. Consider the commutative diagram of $\mathbb{Z}$-modules
\begin{equation}\label{eq:integer multiplication 1}
\begin{tikzcd}
\mathbb{Z} \otimes_\mathbb{Z} \mathbb{Z} \arrow{r}{j_p} \arrow[swap]{d}{\beta} &  \mathbb{Z}[x] \otimes_\mathbb{Z} \mathbb{Z}[x]  \arrow{d}{\beta'} \\
\mathbb{Z}   &\arrow{l}{\operatorname{ev}_p} \mathbb{Z}[x]
\end{tikzcd}
\end{equation}
where $\beta$ and $\beta'$ are multiplications in $\mathbb{Z}$ and $\mathbb{Z}[x]$ respectively.  For any $n\in \mathbb{Z}$,  we define $f_n(x) \coloneqq \sum_{i=0}^{d}a_i x^i \in \mathbb{Z}[x]$ where $a_0,\dots, a_d \in \{0,\dots,p-1\}$ are such that $n = \sum_{i=0}^d a_i p^i$, the base-$p$ (i.e., $p$-adic) expansion of $n$. The embedding  $j_p$ is defined by 
\[
j_p (m \otimes n) = f_m(x) \otimes f_n(x),
\] 
and the evaluation map  $\operatorname{ev}_p$ sends $f(x)\in \mathbb{Z}[x]$ to $f(p)\in \mathbb{Z}$. Now we may use divide-and-conquer, interpolation, discrete Fourier transform, and fast Fourier transform to multiply the two polynomials, giving us Karatsuba, Toom--Cook, Sch\"{o}nhage--Strassen, and F\"{u}rer algorithms respectively.
\end{example}

\section{Sparse, banded, and triangular matrices}\label{sec:sparse}

We begin our study of structured matrices with \textit{sparse matrices}, a particularly simple case that does not require the use of Cohn--Umans method. The results are also unsurprising.

One might wonder what happens to Corollaries~\ref{cor:mat-vec} and \ref{cor:mat-vec2} when the matrix involved has zero entries. The answer is what one would expect --- the tensor and border rank are both given by the number of nonzero entries. For any $\Omega \subseteq \{1,\dots,m \} \times \{1, \dots, n\}$, the set of matrices with \textit{sparsity pattern} $\Omega$ is
\[
\mathbb{C}^{m \times n}_\Omega \coloneqq \{ A \in \mathbb{C}^{m \times n} : a_{ij} = 0 \; \text{for all} \; (i,j) \not\in \Omega \},
\]
which is clearly a vector space of dimension $\# \Omega$.
\begin{proposition}\label{prop:sparse}
Let $\beta_\Omega : \mathbb{C}^{m \times n}_\Omega \times \mathbb{C}^n \to \mathbb{C}^m$, $(A, x) \mapsto Ax$. Let $\mu_\Omega \in (\mathbb{C}^{m \times n}_\Omega)^* \otimes (\mathbb{C}^n)^* \otimes \mathbb{C}^m$ be the corresponding structure tensor. Then
\[
\brank(\mu_\Omega) = \rank(\mu_\Omega) = \# \Omega.
\]
\end{proposition}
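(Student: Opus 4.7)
The plan is to sandwich $\rank(\mu_\Omega)$ between $\#\Omega$ and $\#\Omega$ via an explicit decomposition (upper bound) and Proposition~\ref{prop:rank lower bound} (lower bound), and then upgrade the rank equality to a border rank equality using Proposition~\ref{prop:border rank equals rank} applied to a suitable re-roling of the structure tensor.

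First I will dispose of the easy upper bound. The matrices $\{E_{ij} : (i,j) \in \Omega\}$ are a basis of $\mathbb{C}^{m\times n}_\Omega$, so writing $E_{ij}^*$ for the corresponding dual basis vectors in $(\mathbb{C}^{m\times n}_\Omega)^*$ gives the explicit $\#\Omega$-term decomposition
\[
\mu_\Omega = \sum_{(i,j) \in \Omega} E_{ij}^* \otimes e_j^* \otimes e_i,
\]
which exhibits $\rank(\mu_\Omega) \le \#\Omega$. For the matching lower bound I mimic the template of Corollary~\ref{cor:mat-vec}: view $\mu_\Omega$ as a linear map $\mathbb{C}^n \otimes (\mathbb{C}^m)^* \to (\mathbb{C}^{m\times n}_\Omega)^*$ and compute $\mu_\Omega(e_j \otimes e_i^*)(E_{i'j'}) = e_i^*(E_{i'j'}e_j)$, which equals $1$ when $(i',j') = (i,j) \in \Omega$ and $0$ otherwise. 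Hence for each $(i,j) \in \Omega$ the contraction produces the dual basis vector $E_{ij}^*$, so the image spans $(\mathbb{C}^{m\times n}_\Omega)^*$. The version of Proposition~\ref{prop:rank lower bound} with the role of $W$ played by $U = \mathbb{C}^{m\times n}_\Omega$ now yields $\rank(\mu_\Omega) \ge \dim \mathbb{C}^{m\times n}_\Omega = \#\Omega$, and combined with the upper bound gives $\rank(\mu_\Omega) = \#\Omega$.

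For the border rank I re-role the structure tensor one more time: I regard $\mu_\Omega$ as the structure tensor of the bilinear map $\tilde\beta_\Omega : \mathbb{C}^n \times (\mathbb{C}^m)^* \to \mathbb{C}^{m\times n}_\Omega$ obtained by permuting the three slots of $\mu_\Omega$. The paragraph above shows $\tilde\beta_\Omega$ is surjective, its rank is $\#\Omega = \dim \mathbb{C}^{m\times n}_\Omega$, and $\#\Omega \le mn = \dim \mathbb{C}^n \cdot \dim (\mathbb{C}^m)^*$, so all hypotheses of Proposition~\ref{prop:border rank equals rank} are met. Applying that proposition gives $\brank(\mu_\Omega) = \rank(\mu_\Omega) = \#\Omega$, using crucially that both rank and border rank are intrinsic invariants of the tensor $\mu_\Omega$ and are insensitive to which of its three factors is distinguished as the codomain. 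No step is a serious obstacle; the only thing to be careful about is bookkeeping the three factors correctly when permuting which slot plays the role of $W$ in each appeal to Propositions~\ref{prop:rank lower bound} and \ref{prop:border rank equals rank}.
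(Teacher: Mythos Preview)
Your proof is correct and follows essentially the same approach as the paper's: upper bound via the obvious algorithm, lower bound via Proposition~\ref{prop:rank lower bound} applied to the $U$-factor, and border rank via Proposition~\ref{prop:border rank equals rank}. The paper compresses all of this into a one-line citation of Propositions~\ref{prop:rank lower bound}, \ref{prop:border rank equals rank}, and \ref{prop:rank multiplication}; you have simply made explicit the re-roling of factors (needed because $\dim \mathbb{C}^m \ne \#\Omega$ in general) and the surjectivity check, both of which the paper leaves to the reader.
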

\begin{proof}
Since the dimension of $\mathbb{C}^{m \times n}_\Omega $  is $\# \Omega$ and the usual matrix-vector product costs $\# \Omega$ multiplications, the required result follows from Propositions~\ref{prop:rank lower bound}, \ref{prop:border rank equals rank}, and \ref{prop:rank multiplication}.
\end{proof}

The set of $n \times n$ \textit{banded matrices}  with upper bandwidth $k$ and lower bandwidth $l$ is the special case when
\[
\Omega = \{ (i,j) \in   \{1,\dots,n \} \times \{1, \dots, n\} : k < j - i < l\}.
\]
Such matrices are called \textit{diagonal} if $(k,l) = 0$, \textit{lower bidiagonal} if $(k,l) =(0,1)$, \textit{upper bidiagonal} if $(k,l) = (1,0)$, \textit{tridiagonal} if $(k,l)=(1,1)$, \textit{pentadiagonal} if $(k,l)=(2,2)$, \textit{lower triangular} if $(k,l)=(0,n-1)$, \textit{upper triangular} if $(k,l) = (n-1,0)$.
\begin{corollary}
The rank and border rank of the structure tensor of $n\times n$ banded matrix-vector product are both $n + [(n-1) +(n-2) + \dots + (n-k)] +  [(n-1) +(n-2) + \dots + (n-l) ]$.
\end{corollary}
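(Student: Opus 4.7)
The plan is to deduce this corollary directly from Proposition~\ref{prop:sparse} by counting the number of nonzero entries in the sparsity pattern of an $n \times n$ banded matrix with upper bandwidth $k$ and lower bandwidth $l$. Since banded matrices are precisely the sparse matrices with the sparsity pattern $\Omega$ indicated immediately before the corollary, Proposition~\ref{prop:sparse} tells us that both $\rank(\mu_\Omega)$ and $\brank(\mu_\Omega)$ equal $\#\Omega$, so the entire task reduces to evaluating the cardinality of $\Omega$.

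To do this, I would partition $\Omega$ according to the distance $j - i$ from the main diagonal. The main diagonal (where $j-i=0$) contributes $n$ entries. For each $d = 1, \dots, k$, the $d$th superdiagonal consists of the entries $(i, i+d)$ with $1 \le i \le n-d$, giving $n-d$ nonzero entries, for a total contribution of $(n-1) + (n-2) + \cdots + (n-k)$. Symmetrically, for each $d = 1, \dots, l$, the $d$th subdiagonal consists of the entries $(i+d, i)$ with $1 \le i \le n-d$, giving $n-d$ entries, for a total of $(n-1) + (n-2) + \cdots + (n-l)$. Since these three families of index pairs are pairwise disjoint and exhaust $\Omega$, summing the three contributions yields the claimed quantity.

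There is no substantive obstacle here: the argument is a one-line application of Proposition~\ref{prop:sparse} combined with a routine enumeration of diagonals. The only minor point worth a sentence of care is to verify explicitly that the three index sets (main diagonal, superdiagonals $1$ through $k$, subdiagonals $1$ through $l$) are disjoint and that for $d \le n-1$ the $d$th super/subdiagonal indeed has exactly $n - d$ positions within $\{1,\dots,n\}\times\{1,\dots,n\}$; both are immediate. The statement of the corollary implicitly assumes $k, l \le n-1$, in which case every term $n - d$ is nonnegative and the formula is well-defined.
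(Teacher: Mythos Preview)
Your proposal is correct and is exactly the intended argument: the paper states this as an immediate corollary of Proposition~\ref{prop:sparse} without giving a proof, and your diagonal-by-diagonal count of $\#\Omega$ supplies precisely the routine verification that is left implicit.
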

\begin{corollary}\label{cor:triangular}
The rank and border rank of the structure tensor of $n\times n$ upper (or lower) triangular matrix-vector product are both $n(n+1)/2$.
\end{corollary}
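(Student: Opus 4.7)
The plan is to recognize that upper and lower triangular matrices are a special instance of the sparse matrix framework already developed in Proposition~\ref{prop:sparse}, so the corollary should reduce to elementary combinatorics rather than requiring any new multilinear-algebraic machinery. Specifically, I would take
\[
\Omega_{\mathrm{up}} = \{(i,j) \in \{1,\dots,n\} \times \{1,\dots,n\} : i \le j\}
\]
as the sparsity pattern for $n \times n$ upper triangular matrices, so that $\mathbb{C}^{n\times n}_{\Omega_{\mathrm{up}}}$ is precisely the space of $n \times n$ upper triangular matrices. Counting entries on and above the diagonal,
\[
\# \Omega_{\mathrm{up}} = n + (n-1) + \dots + 1 = \frac{n(n+1)}{2}.
\]
Invoking Proposition~\ref{prop:sparse} with $\Omega = \Omega_{\mathrm{up}}$ then immediately yields the claimed values for both the tensor rank and the border rank.

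The lower triangular case is completely symmetric: I would take $\Omega_{\mathrm{low}} = \{(i,j) : i \ge j\}$, which also has cardinality $n(n+1)/2$, and again apply Proposition~\ref{prop:sparse}. Alternatively, one may deduce the corollary from the preceding banded-matrix corollary by specializing to $(k,l) = (n-1,0)$ (upper triangular) or $(k,l) = (0,n-1)$ (lower triangular), in which case the formula
\[
n + \bigl[(n-1) + (n-2) + \dots + (n-k)\bigr] + \bigl[(n-1) + (n-2) + \dots + (n-l)\bigr]
\]
collapses to $n + [(n-1) + (n-2) + \cdots + 1] = n(n+1)/2$, giving a second route to the same conclusion.

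Since the proof is essentially a one-line application of an already-established proposition, there is no real obstacle to confront; the only thing to verify is that the triangular sparsity pattern genuinely fits the hypotheses of Proposition~\ref{prop:sparse}, which is immediate from the definitions. I would keep the written proof correspondingly short.
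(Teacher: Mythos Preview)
Your proposal is correct and matches the paper's approach exactly: the paper states this corollary immediately after Proposition~\ref{prop:sparse} and the banded-matrix corollary without any separate proof, treating it as an immediate specialization. Your two routes (direct application of Proposition~\ref{prop:sparse} with $\Omega_{\mathrm{up}}$, or specializing the banded formula at $(k,l)=(n-1,0)$) are precisely the intended justifications.
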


\section{Circulant matrix}\label{sec:circulant}

In this section, we consider the problem of computing the product of a circulant matrix with a vector or, equivalently (as we shall see), the product of two circulant matrices. We will obtain an algorithm with optimal bilinear complexity (i.e., minimum number of multiplications). This algorithm will turn out to \change{be similar to} the well-known algorithm  (e.g.\ \cite[Section~4.8.2]{GVL} \change{or \cite[Section~2.4]{Pan}}) for computing product of two circulant matrices using \textsc{fft} \cite{fft} but we will derive it (i) purely from the perspective of optimal bilinear complexity and (ii) using the Cohn--Umans group theoretic method. \change{However, a key difference is that while the well-known  algorithm in \cite{GVL, Pan} crucially depends on the use of \textsc{fft}, our algorithm is indifferent to how \textsc{dft} is computed --- as we have pointed out in Section~\ref{sec:arith}, \textsc{dft} incurs no cost in bilinear complexity.} This serves as our first example of using the Cohn--Umans method for problems other than matrix multiplication.

We begin by considering a special case of  matrix multiplication where we multiply a $1 \times n$ matrix and an $n \times 1$ matrix. Let $a^\mathsf{T} =[a_1,\dots,a_n]\in \mathbb{C}^{1 \times n}$ be a row vector and let $b=[b_1,\dots,b_n]^\mathsf{T} \in \mathbb{C}^{n \times 1}$ be a column vector. It is easy to verify that the cyclic group $C_n=\langle g\mid g^n=1 \rangle$ realizes $\left\langle1,n,1 \right\rangle$ via subsets $S= \{1\}$, $T= C_n$,  $U = \{1\}$ that clearly satisfy the triple product property. By \eqref{emb}, we have 
\[
\widehat{a}=\sum_{i=1}^n a_i g^i,\qquad
\widehat{b}=\sum_{i=1}^n b_i g^{-i}.
\]
The coefficients of $g^{k}$ where $k=0,1,\dots, n-1$ are
\begin{equation}\label{eq:eq1}
\sum_{i=1}^n a_{i+k}b_i, \qquad \text{where}\; a_{s}=a_{s'} \; \text{iff} \; s\equiv s' \mod  n.
\end{equation}
To calculate $a^\mathsf{T}b$ we just need the coefficient of $1\in \mathbb{C}[C_n]$ but not the coefficients of the remaining $n-1$  terms (what we called `junk terms' earlier). On the other hand, if we calculate the product of two circulant matrices, then these $n-1$ terms become useful. 

Let $\Circ_n(\mathbb{C})$ be the linear space of all circulant matrices. It is well-known that $\Circ_n(\mathbb{C})$ is closed under the matrix multiplication and so is an algebra.
\begin{proposition}\label{prop:circulant}
Let $\beta_c : \Circ_n(\mathbb{C})\times \mathbb{C}^n \to \mathbb{C}^n$, $(\Circ(x), y) \mapsto \Circ(x)y$ be the circulant matrix-vector product and $\mu_c \in \Circ_n(\mathbb{C})^*\otimes (\mathbb{C}^n)^* \otimes \mathbb{C}^n$ be the corresponding structure tensor.
Let $\mu_{C}\in \Circ_n(\mathbb{C})^*\otimes \Circ_n(\mathbb{C})^* \otimes \Circ_n(\mathbb{C})$ be the structure tensor of the algebra $\Circ_n(\mathbb{C})$. Then 
\[
\rank(\mu_c)=
\rank(\mu_C)=n.
\]
\end{proposition}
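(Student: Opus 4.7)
The plan is to prove the two equalities by sandwiching $\rank(\mu_c)$ between $n$ (a lower bound) and $\rank(\mu_C)$ (an upper bound via a generalized Cohn--Umans embedding), and then computing $\rank(\mu_C) = n$ directly.

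For the lower bound $\rank(\mu_c) \ge n$, I would invoke Proposition~\ref{prop:rank lower bound}. Since $\Circ_n(\mathbb{C})$ contains the identity matrix $I_n$, the image $\mu_c(\Circ_n(\mathbb{C}) \otimes \mathbb{C}^n)$ already contains $I_n y = y$ for every $y \in \mathbb{C}^n$; hence $\beta_c$ is surjective onto $\mathbb{C}^n$, and Proposition~\ref{prop:rank lower bound} gives $\rank(\mu_c) \ge \dim \mathbb{C}^n = n$. The same argument yields $\rank(\mu_C) \ge \dim \Circ_n(\mathbb{C}) = n$ (using that the identity lies in the algebra).

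For the upper bound $\rank(\mu_c) \le \rank(\mu_C)$, I would use the generalized Cohn--Umans diagram \eqref{eq:gentpp0} with $\mathcal{A} = \Circ_n(\mathbb{C})$. Define the vector space embedding
\[
j : \Circ_n(\mathbb{C}) \otimes \mathbb{C}^n \longrightarrow \Circ_n(\mathbb{C}) \otimes \Circ_n(\mathbb{C}), \qquad \Circ(x) \otimes y \longmapsto \Circ(x) \otimes \Circ(y),
\]
where we identify $y \in \mathbb{C}^n$ with the circulant matrix having $y$ as its first column, and define $\proj : \Circ_n(\mathbb{C}) \to \mathbb{C}^n$ to read off the first column. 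The key identity that makes the square commute is that $\Circ_n(\mathbb{C})$ is closed under multiplication and $\Circ(x)\,\Circ(y) = \Circ(\Circ(x)y)$, so the first column of $\Circ(x)\,\Circ(y)$ is precisely $\Circ(x)y$. Since $j$ is injective and the projection and embedding cost no multiplications in the sense of Section~\ref{sec:arith}, we obtain $\rank(\mu_c) \le \rank(\mu_C)$.

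It remains to show $\rank(\mu_C) \le n$, which together with the lower bound closes both inequalities. I would argue this via Winograd's theorem (Theorem~\ref{thm:Winograd}): the algebra isomorphism $\Circ_n(\mathbb{C}) \cong \mathbb{C}[x]/(x^n - 1)$ (sending a circulant to the polynomial encoding its first row) identifies $\mu_C$ with the structure tensor of $\mathbb{C}[x]/(x^n-1)$. Since $\mathbb{C}$ contains all $n$-th roots of unity, $x^n - 1 = \prod_{k=0}^{n-1}(x - \omega^k)$ factors into $q = n$ distinct linear factors, and Theorem~\ref{thm:Winograd} gives rank $2n - q = n$. (Alternatively, the discrete Fourier transform diagonalizes $\Circ_n(\mathbb{C}) \cong \mathbb{C}^n$ as algebras, whose structure tensor is the obvious $\sum_{i=1}^n e_i^* \otimes e_i^* \otimes e_i$ of rank $n$, and isomorphism of algebras preserves the rank of the structure tensor.) Combining, $n \le \rank(\mu_c) \le \rank(\mu_C) = n$, giving the claim.

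The only subtle point is making sure that $j$ is well defined as a linear embedding at the level of tensors (not just set-maps on $\Circ_n(\mathbb{C}) \times \mathbb{C}^n$) and that it really fits into the commutative diagram; this is essentially the content of the identity $\Circ(x)\Circ(y) = \Circ(\Circ(x)y)$, so there is no real obstacle.
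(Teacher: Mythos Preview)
Your argument is correct and follows essentially the same route as the paper: both establish $\rank(\mu_c)=\rank(\mu_C)$ via the observation that a product of circulants is determined by a single matrix-vector product, and both obtain the lower bound from Proposition~\ref{prop:rank lower bound}. The only real difference is in the upper bound $\rank(\mu_C)\le n$: the paper works this out explicitly by identifying $\Circ_n(\mathbb{C})$ with the group algebra $\mathbb{C}[C_n]$ and applying Wedderburn's theorem to diagonalize (i.e., the \textsc{dft}), whereas you shortcut this by invoking Winograd's Theorem~\ref{thm:Winograd} on $\mathbb{C}[x]/(x^n-1)$; since $\mathbb{C}[C_n]\cong\mathbb{C}[x]/(x^n-1)$ and your parenthetical \textsc{dft} alternative is precisely the paper's computation, the two arguments are equivalent in content.
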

\begin{proof}
A circulant matrix is completely specified by its first column or first row.
In particular, since the product of two circulant matrices is still circulant, the product is determined by its first column. Let $x=[x_1,\dots, x_n]^\mathsf{T} \in \mathbb{C}^n$  and let $\Circ(x)$ denote the circulant matrix 
\[
\Circ(x)=
\begin{bmatrix}
x_1 & x_2 & \dots & x_{n-1} & x_n\\
x_n & x_1 & \dots & x_{n-2} &x_{n-1}\\
\vdots & \vdots & \ddots & \vdots& \vdots \\
x_3 & x_4 & \dots & x_1 & x_2\\
x_2 & x_3 & \dots &  x_n &x_1 
\end{bmatrix} \in  \mathbb{C}^{n \times n}.
\]
Observe that to calculate the matrix-matrix product $\Circ(x)\Circ(y)$ it suffices to calculate the matrix-vector product $\Circ(x)[y_1,y_n,\dots, y_2]^\mathsf{T}$. This implies that the structure tensor of the algebra $\mu_{C}$ can be obtained from the structure tensor $\mu_{c}$ of the bilinear map $\beta_c$ and that 
\[
\rank(\mu_{c})=\rank(\mu_{C}).
\]
To compute $\rank(\mu_{c})$, observe that for two vectors $x,y\in \mathbb{C}^n$, we have
\[
\Circ(x)y= \left[\sum_{i=1}^n x_iy_{i}, \sum_{i=1}^n x_iy_{i+1},\dots, \sum_{i=1}^n x_iy_{i+n-1}\right]^\mathsf{T}.
\]
Here we adopt the same convention in \eqref{eq:eq1} that $y_s=y_{s'}$ iff $s\equiv s' \mod n$. Since the entries of $\Circ(x)y$ are exactly the coefficients of the product $\widehat{x}\cdot\widehat{y}\in \mathbb{C}[C_n]$, where $\widehat{x},\widehat{y} \in \mathbb{C}[C_n]$  are obtained as in \eqref{emb}. Now it remains to count the number of multiplications needed to form $\widehat{x}\cdot\widehat{y}$ in $\mathbb{C}[C_n]$. Since $C_n$ is the cyclic group of order $n$, it has exactly $n$ representations, all of dimension  one; these are indexed by the roots of unity $1, \omega,\dots, \omega^{n-1}$ where $\omega=e^{2k\pi \mathrm{i}/n}$. Denote these representations by $V_0,\dots, V_{n-1}$ where $V_i\simeq \mathbb{C}$ is given by\footnote{We do not distinguish between an irreducible representation of $G$ and its irreducible $\mathbb{C}[G]$-submodule.}
\[
\rho_i: C_n\to \mathbb{C}, \qquad g\mapsto \omega^i, \qquad i =0,\dots,n-1.
\]
On the other hand, by Wedderburn Theorem we have 
\[
\mathbb{C}[C_n]\simeq \bigoplus_{i=0}^{n-1} V_i^*\otimes V_i,
\]
i.e., we may express elements in $\mathbb{C}[C_n]$ as $n\times n$ \change{diagonal} matrices. Explicitly, $\widehat{x}=\sum_{i=0}^{n-1} x_i g^i $ corresponds to the diagonal matrix
\[
\diag \left(\sum_{i=0}^{n-1} x_i\omega^i, \sum_{i=0}^{n-1} x_i\omega^{2i},\dots, \sum_{i=0}^{n-1} x_i\omega^{(n-1)i},\sum_{i=0}^{n-1} x_i\right),
\]
and $\widehat{y}=\sum_{i=0}^{n-1} y_i g ^{-i}$ corresponds to the diagonal matrix
\[
\diag \left(\sum_{i=0}^{n-1} y_i\omega^{-i}, \sum y_i\omega^{-2i},\dots, \sum_{i=0}^{n-1} y_i\omega^{-(n-1)i},\sum_{i=0}^{n-1} y_i\right).
\]
Therefore we need  $n$ multiplications to compute $\widehat{x}\cdot\widehat{y}$ and thus
\[
\rank(\mu_{c})\le n.
\]
On the other hand, it follows from Proposition~\ref{prop:rank lower bound} that $\rank(\mu_{C})\ge n$ since the image $\mu_{C}(\Circ_n(\mathbb{C})\otimes \Circ_n(\mathbb{C}))$ is the whole of $\Circ_n(\mathbb{C})$.
\end{proof}
The proof of Proposition~\ref{prop:circulant} is constructive --- it gives an algorithm with optimal bilinear complexity that computes a circulant matrix-vector product or a circulant matrix-circulant matrix product using only $n$ multiplications. In fact, this algorithm is essentially the same as the well-known algorithm for circulant matrix-vector product using Fast Fourier Transform (\textsc{fft}).

A departure from usual considerations in numerical linear algebra is that we only care about the number of  multiplications used in the algorithm. We minimize the number of multiplications by paying the price of using more additions. We require $n^2$ additions to execute our algorithm if we have our input $(\Circ(x), y)$ and output $\Circ(x)y$ expressed in the standard basis $\{e_1,\dots,e_n\}$ on $\mathbb{C}^n$. However, if we use the Fourier basis $f_1,\dots,f_n$ on $\mathbb{C}^n$, i.e., the Discrete Fourier Transform (\textsc{dft}) of $e_1,\dots,e_n$, then we require no addition at all to execute our algorithm.

The \textsc{dft} is a linear map and so computing $f_1,\dots,f_n$ from $e_1,\dots,e_n$ would involve only additions and scalar multiplications. Hence the use of different bases will not change the number of multiplications needed to multiply a circulant matrix to a vector (or two circulant matrices). This agrees with our expectation --- a tensor and therefore its rank do not depend on the choice of bases.

Proposition~\ref{prop:border rank equals rank} immediately gives us the border rank analogue of Proposition~\ref{prop:circulant}. It is also straightforward to obtain the analogue of Proposition~\ref{prop:circulant} for inversion of circulant matrices.
\begin{corollary}
The border ranks of the structure tensor of the bilinear operation $\beta_{c}$ and the structure tensor of the algebra $\Circ_n(\mathbb{C})$ are both $n$.
\[
\brank(\mu_{c})=\brank(\mu_{C})=n.
\]
\end{corollary}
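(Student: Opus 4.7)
The plan is to apply Proposition~\ref{prop:border rank equals rank} to each of the two structure tensors $\mu_c$ and $\mu_C$. By Proposition~\ref{prop:circulant} we already know that both tensors have rank exactly $n$, which equals the dimension of the target space in each case (namely $\mathbb{C}^n$ for $\mu_c$ and $\Circ_n(\mathbb{C})$ for $\mu_C$, both of dimension $n$). So the only thing that remains to verify in order to invoke Proposition~\ref{prop:border rank equals rank} is the surjectivity hypothesis $\mu(U\otimes V)=W$ together with the mild inequality $\dim W \le \dim U \dim V$.

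First I would handle $\mu_c$. Since the identity matrix $I_n$ is circulant --- it equals $\Circ(e_1)$ where $e_1=[1,0,\dots,0]^\mathsf{T}$ --- we have $\beta_c(I_n, y)=y$ for every $y\in\mathbb{C}^n$, so $\beta_c$ (hence $\mu_c$) is surjective onto $\mathbb{C}^n$. The dimension check is trivial: $\dim \mathbb{C}^n = n \le n\cdot n = \dim \Circ_n(\mathbb{C})\cdot\dim\mathbb{C}^n$. Proposition~\ref{prop:border rank equals rank} then gives $\brank(\mu_c)=\rank(\mu_c)=n$.

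The argument for $\mu_C$ is essentially the same: $\beta_C(I_n,\Circ(y))=\Circ(y)$ shows that the multiplication map on $\Circ_n(\mathbb{C})$ is surjective, and $\dim \Circ_n(\mathbb{C}) = n \le n^2 = \dim\Circ_n(\mathbb{C})\cdot\dim\Circ_n(\mathbb{C})$. Combined with $\rank(\mu_C)=n$ from Proposition~\ref{prop:circulant}, Proposition~\ref{prop:border rank equals rank} yields $\brank(\mu_C)=n$.

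There is no real obstacle here --- the corollary is essentially an immediate consequence of Propositions~\ref{prop:circulant} and~\ref{prop:border rank equals rank}, since the hypotheses of the latter are verified simply by exhibiting the identity element as a witness to surjectivity. The only point worth being careful about is the order of the factors in the definition of $\beta_c$, but the identity matrix works equally well on either side.
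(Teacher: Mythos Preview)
Your proof is correct and follows exactly the approach the paper takes: the corollary is stated without proof, with only the remark that Proposition~\ref{prop:border rank equals rank} immediately gives the border rank analogue of Proposition~\ref{prop:circulant}. Your explicit verification of the surjectivity hypothesis via the identity matrix (already implicitly noted at the end of the proof of Proposition~\ref{prop:circulant} for $\mu_C$) and of the dimension inequality fills in precisely the details the paper leaves to the reader.
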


\begin{corollary}\label{cor:circulant inverse}
Let $X=\Circ(x)$ be a nonsingular circulant matrix. Then one requires just $n$ divisions to compute its inverse $X^{-1}$. 
\end{corollary}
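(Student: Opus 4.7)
The plan is to leverage the Wedderburn decomposition of $\Circ_n(\mathbb{C})$ already established in the proof of Proposition~\ref{prop:circulant}, which diagonalizes circulant multiplication and hence circulant inversion.

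Recall from that proof that there is an algebra isomorphism
\[
\Phi:\Circ_n(\mathbb{C}) \xrightarrow{\;\sim\;} \bigoplus_{i=0}^{n-1} V_i^*\otimes V_i \;\cong\; \mathbb{C}^n,
\]
sending $\Circ(x)$ to the $n$-tuple
\[
\Phi(\Circ(x)) = \Bigl(\sum_{i=0}^{n-1} x_i\omega^i,\; \sum_{i=0}^{n-1} x_i\omega^{2i},\; \dots,\; \sum_{i=0}^{n-1} x_i\Bigr),
\]
i.e., a discrete Fourier transform of $x$, where multiplication on $\mathbb{C}^n$ is taken entrywise. The map $\Phi$, as well as its inverse, is $\mathbb{C}$-linear and its matrix entries are fixed constants (powers of $\omega$ and of $\omega^{-1}/n$), so by the convention in Section~\ref{sec:arith}, applying $\Phi$ or $\Phi^{-1}$ incurs zero multiplications in the sense of bilinear complexity.

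The algorithm is then immediate. First I would compute $\Phi(X) = (d_0,\dots,d_{n-1}) \in \mathbb{C}^n$; since $X$ is nonsingular and $\Phi$ is an algebra isomorphism, every $d_i$ is nonzero. Next I would form $(d_0^{-1},\dots,d_{n-1}^{-1})$, which uses exactly $n$ divisions (one per coordinate), and this is the image under $\Phi$ of $X^{-1}$ because $\Phi(X)\cdot \Phi(X^{-1}) = \Phi(I) = (1,\dots,1)$ coordinatewise. Finally, applying $\Phi^{-1}$ returns $X^{-1}$ as a circulant matrix, again at no cost in the bilinear-complexity accounting.

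There is essentially no obstacle here beyond observing that the change of basis given by the \textsc{dft} (and its inverse) is a purely linear step with constant coefficients, so neither direction contributes anything countable; the only genuine arithmetic operations on the \emph{input} data $x$ are the $n$ reciprocals taken in the Fourier-transformed coordinates. This gives the claimed bound of $n$ divisions.
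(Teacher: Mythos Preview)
Your proof is correct and follows essentially the same approach as the paper's: both use the diagonalization of $\Circ_n(\mathbb{C})$ via the Wedderburn/\textsc{dft} isomorphism established in Proposition~\ref{prop:circulant}, observe that inversion becomes coordinatewise reciprocation costing exactly $n$ divisions, and note (you do so more explicitly) that the passage to and from Fourier coordinates involves only fixed constants and hence is free in the bilinear-complexity model.
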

\begin{proof}
Let $X = \Circ(x)$ where $x = [x_1,\dots, x_n]^\mathsf{T} \in \mathbb{C}^n$ and let $Y = X^{-1}$ be given by $Y = \Circ(y)$ where $y = [y_1,\dots, y_n]^\mathsf{T} \in \mathbb{C}^n$. As in the proof of Proposition~\ref{prop:circulant}, their corresponding images in $\mathbb{C}[C_n]$ are
\begin{align*}
\widehat{X}&=\diag \left(\sum_{i=0}^{n-1} x_i\omega^i, \sum_{i=0}^{n-1} x_i\omega^{2i},\dots, \sum_{i=0}^{n-1} x_i\omega^{(n-1)i},\sum_{i=0}^{n-1} x_i\right),\\
\widehat{Y}&=\diag \left(\sum_{i=0}^{n-1} y_i\omega^{-i}, \sum y_i\omega^{-2i},\dots, \sum_{i=0}^{n-1} y_i\omega^{-(n-1)i},\sum_{i=0}^{n-1} y_i\right).
\end{align*}
Since $\widehat{X} \cdot \widehat{Y}=I$, we obtain
\[
\widehat{Y} = \diag \Biggl( \left(\sum_{i=0}^{n-1} x_i\omega^i\right)^{-1}, \left(\sum_{i=0}^{n-1} x_i\omega^{2i}\right)^{-1},\dots, \left(\sum_{i=0}^{n-1} x_i\omega^{(n-1)i}\right)^{-1},\left(\sum_{i=0}^{n-1} x_i\right)^{-1}\Biggr).
\]
Hence inverting $X$ requires $n$ divisions\footnote{The reader is reminded that scalar multiplications by a constant like $\omega^i$ are not counted in bilinear complexity.}.
\end{proof}

\section{$f$-circulant and skew-circulant matrices}\label{sec:f-circ}

We now extend the work of the previous section to \textit{$f$-circulant matrices}.  For any $f \in \mathbb{C}$, an $f$-circulant matrix is one of the form
\[
\begin{bmatrix}
x_1 & x_2 & \dots & x_{n-1} & x_n\\
fx_n & x_1 & \dots & x_{n-2} & x_{n-1}\\
\vdots & \vdots & \ddots & \vdots & \vdots\\
fx_3 & fx_4 & \dots & x_1 & x_2\\
fx_2 & fx_3 & \dots & fx_n & x_1
\end{bmatrix} \in  \mathbb{C}^{n \times n}.
\]
We denote the vector space of $n \times n$ $f$-circulant matrices by $\Circ_{n,f}(\mathbb{C})$. Evidently, a $1$-circulant matrix is just a usual circulant matrix. If $f=-1$, an $f$-circulant matrix is also called a \textit{skew-circulant matrix}.

It is \change{well-known \cite[Theorem~2.6.1]{Pan} and} straightforward to see that $\Circ_{n,f}(\mathbb{C})\simeq \mathbb{C}[x]/(x^n-f)$ and therefore also an algebra. We may employ the same techniques we used in the case $f=1$ to prove the following.
\begin{proposition}
The rank and border rank of the structure tensor of the $f$-circulant matrix-vector product over $\mathbb{C}$ are both $n$. Furthermore, one can invert a nonsingular $f$-circulant matrix over $\mathbb{C}$ using just $n$ divisions.
\end{proposition}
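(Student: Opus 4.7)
The plan is to carry over the strategy of Proposition~\ref{prop:circulant} and Corollary~\ref{cor:circulant inverse} essentially verbatim, with the group algebra $\mathbb{C}[C_n]$ replaced by the commutative algebra $\mathbb{C}[x]/(x^n-f)$, which is isomorphic as an algebra to $\Circ_{n,f}(\mathbb{C})$ as noted in the excerpt. We may assume $f\neq 0$ (the degenerate case $f=0$ yields a triangular Toeplitz structure that belongs to Section~\ref{sec:triangular} rather than here).

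First, exactly as in the proof of Proposition~\ref{prop:circulant}, I would observe that since $\Circ_{n,f}(\mathbb{C})$ is closed under multiplication and a product of two $f$-circulants is determined by its first column, the structure tensor $\mu_{c,f}$ of the matrix-vector product $\beta_{c,f}:\Circ_{n,f}(\mathbb{C})\times \mathbb{C}^n\to \mathbb{C}^n$ has the same rank as the structure tensor of the algebra $\Circ_{n,f}(\mathbb{C})\simeq \mathbb{C}[x]/(x^n-f)$. Thus it suffices to compute the rank of the structure tensor of the algebra $\mathbb{C}[x]/(x^n-f)$.

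For the upper bound, over $\mathbb{C}$ the polynomial $x^n-f$ splits as $\prod_{k=0}^{n-1}(x-\omega^k f^{1/n})$ with $\omega=e^{2\pi i/n}$, so it has $n$ distinct linear factors. By the Chinese Remainder Theorem,
\[
\mathbb{C}[x]/(x^n-f)\;\simeq\; \bigoplus_{k=0}^{n-1} \mathbb{C}[x]/(x-\omega^k f^{1/n})\;\simeq\; \mathbb{C}^n,
\]
with multiplication on the right given coordinatewise. Since coordinatewise multiplication in $\mathbb{C}^n$ costs exactly $n$ multiplications, the rank is at most $n$. Equivalently, this is Winograd's theorem (Theorem~\ref{thm:Winograd}) applied to $p(x)=x^n-f$ with $q=n$ distinct irreducible factors, giving $2n-q=n$. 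For the lower bound, the map $\mu_{c,f}(\Circ_{n,f}(\mathbb{C})\otimes \mathbb{C}^n)=\mathbb{C}^n$ is surjective onto $\mathbb{C}^n$, so Proposition~\ref{prop:rank lower bound} yields $\rank(\mu_{c,f})\geq n$. Combining with Proposition~\ref{prop:border rank equals rank} then also gives $\brank(\mu_{c,f})=n$.

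Finally, for inversion, I would mirror Corollary~\ref{cor:circulant inverse}: under the isomorphism above, an invertible $f$-circulant $X$ corresponds to an $n$-tuple of nonzero scalars $(p_X(\omega^k f^{1/n}))_{k=0}^{n-1}$, where $p_X\in \mathbb{C}[x]$ is the representative polynomial, and $X^{-1}$ corresponds to the entrywise reciprocal, which requires exactly $n$ divisions (the forward and backward CRT change-of-basis involve only scalar multiplications by the fixed constants $\omega^k f^{1/n}$, which are free in bilinear complexity). I do not foresee a substantive obstacle; the only subtlety is verifying that the CRT isomorphism and its inverse really can be implemented using no indeterminate multiplications, which follows because the transition maps are linear with entries determined by the fixed scalars $\omega^k f^{1/n}$.
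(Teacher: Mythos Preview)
Your proposal is correct and follows essentially the same approach that the paper indicates: the paper explicitly states that ``the proofs of these statements are near identical to those in the previous section and we will not repeat them,'' and your argument---replacing $\mathbb{C}[C_n]$ by $\mathbb{C}[x]/(x^n-f)$, diagonalizing via the $n$ distinct roots of $x^n-f$, and then invoking Propositions~\ref{prop:rank lower bound} and \ref{prop:border rank equals rank}---is precisely that translation. Your remark that $f=0$ reduces to the triangular Toeplitz case of Section~\ref{sec:triangular} is a sensible aside, and the alternative appeal to Theorem~\ref{thm:Winograd} for the upper bound is also valid.
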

The proofs of these statements are near identical to those in the previous section and we will not repeat them. What we will instead investigate is an interesting special case when $n=2$ and $f = -1$ but over $\mathbb{R}$ instead of $\mathbb{C}$.
\begin{proposition}\label{prop:rank real skew-circulant}
The rank of the structure tensor of $2\times 2$ skew-circulant matrix-vector product over $\mathbb{R}$ is three.
\end{proposition}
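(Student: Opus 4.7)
The plan is to reduce the problem to the bilinear complexity of multiplication in the algebra
\[
\Circ_{2,-1}(\mathbb{R}) \cong \mathbb{R}[x]/(x^2+1),
\]
and then apply Winograd's theorem (Theorem~\ref{thm:Winograd}) over the infinite field $\mathbb{R}$.

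First, I would argue, exactly as in the proof of Proposition~\ref{prop:circulant} for ordinary circulants, that a $2\times 2$ skew-circulant matrix $\Circ_{2,-1}(x)$ is completely determined by its first column, and that for any $y=[y_1,y_2]^\mathsf{T}\in \mathbb{R}^2$ one has $\Circ_{2,-1}(x)\,y = \Circ_{2,-1}(x)\,\Circ_{2,-1}(\tilde{y})\,e_1$ for a suitable reordering $\tilde{y}$ of the entries of $y$. Conversely, the first column of a product of two $2\times 2$ skew-circulants is exactly a skew-circulant matrix-vector product. Hence the rank of $\mu_\beta$ for skew-circulant matrix-vector product equals the rank of the structure tensor of the algebra $\Circ_{2,-1}(\mathbb{R})$, and under the standard identification $\Circ_{2,-1}(\mathbb{R})\cong \mathbb{R}[x]/(x^2+1)$ this in turn equals the rank of the structure tensor of the $\mathbb{R}$-algebra $\mathbb{R}[x]/(x^2+1)$.

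For the lower bound, I would invoke Theorem~\ref{thm:Winograd} with $k=\mathbb{R}$, $n=2$, and $p(x)=x^2+1$. Since $p(x)$ is irreducible over $\mathbb{R}$, its complete factorization has a single factor, so $q=1$, and Winograd's theorem gives $\rank(\mu_{\mathbb{R}[x]/(x^2+1)}) = 2n-q = 3$. For the matching upper bound, I would exhibit the three-term decomposition \eqref{eq:cplx2} of $\mu_{\mathbb{C}}$ already recorded in Example~\ref{eg:cplx}, which corresponds to Gauss's algorithm; since $\mathbb{R}[x]/(x^2+1)$ and $\mathbb{C}$ are isomorphic as $\mathbb{R}$-algebras, this provides a rank-$3$ decomposition of the structure tensor of $\Circ_{2,-1}(\mathbb{R})$, and hence of $\mu_\beta$.

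The main obstacle, albeit a mild one, is the care needed at the lower bound: one must work over $\mathbb{R}$ rather than $\mathbb{C}$, because $\mathbb{C}[x]/(x^2+1)\cong \mathbb{C}\oplus \mathbb{C}$ has structure tensor of rank $2$, in agreement with the earlier proposition for $f$-circulants over $\mathbb{C}$. The reason the answer jumps from $2$ to $3$ is precisely the failure of $x^2+1$ to split over $\mathbb{R}$, and this is the content that Winograd's theorem supplies; one must be careful not to accidentally appeal to the split form. As an aside, this also explains the claim in Section~\ref{sec:overview} that Gauss's multiplication of complex numbers is the simplest case of our skew-circulant matrix-vector product algorithm.
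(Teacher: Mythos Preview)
Your proof is correct but takes a genuinely different route from the paper. The paper argues directly: for the upper bound it writes out Gauss's three multiplications $M_1=(a+b)(c+d)$, $M_2=ac$, $M_3=bd$ explicitly, and for the lower bound it runs an elementary contradiction argument, assuming a rank-$2$ decomposition and chasing coefficients until one is forced to solve $t^2/s=-s$ over $\mathbb{R}$. Your approach instead identifies the matrix-vector product with multiplication in $\mathbb{R}[x]/(x^2+1)$ (via the same mechanism as Proposition~\ref{prop:circulant}) and then reads off the exact rank $2n-q=3$ from Winograd's theorem, since $x^2+1$ is irreducible over $\mathbb{R}$.

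What each buys: the paper's argument is fully self-contained and makes transparent \emph{why} the real field matters---the obstruction is literally the unsolvability of $t^2=-s^2$ in $\mathbb{R}$. Your argument is cleaner and more structural, leverages machinery already in the paper, and generalizes immediately to $\mathbb{R}[x]/(p(x))$ for any irreducible $p$; indeed, once Winograd is invoked there is no separate need for the explicit upper-bound decomposition, though citing \eqref{eq:cplx2} is a nice touch that ties back to Gauss. Your caveat about not accidentally working over $\mathbb{C}$ is exactly the right point to flag.
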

\begin{proof}
The product of a $2 \times 2$ real skew-circulant matrix with a vector is given by
\[
X=\begin{bmatrix}
a & b\\
-b & a
\end{bmatrix} \in \mathbb{R}^{2 \times 2},\qquad
v=\begin{bmatrix}
c \\
-d
\end{bmatrix} \in \mathbb{R}^2,\qquad
Xv=\begin{bmatrix}
ac-bd\\
-ad-bc
\end{bmatrix} \in \mathbb{R}^2.
\]
Observe that to compute $Xv$ we require just three real multiplications:
\[
M_1=(a+b)(c+d),\qquad
M_2=ac,\qquad
M_3=bd
\]
to obtain
\begin{equation}\label{eq:gauss2}
ac-bd=M_2-M_3,\qquad
-ad-bc=-(M_1-M_2-M_3).
\end{equation}
Therefore, the rank $r$ of the structure tensor of $2\times 2$ skew-circulant matrix-vector product over $\mathbb{R}$ is at most three. We show that $r$ cannot be two.  Suppose $r=2$, then there exist polynomials $M_1,M_2$ in $a,b,c,d$, each costing only one multiplication to evaluate, such that 
\[
ac-bd=\alpha_1 M_1 + \alpha_2 M_2,\qquad
ad+bc=\beta_1 M_1 + \beta_2 M_2 .
\]
Since $a,b,c,d$ are independent variables, we must also have that
\[
\det \begin{bmatrix}
\alpha_1 & \alpha_2\\
\beta_1 & \beta_2 
\end{bmatrix} \ne 0.
\]
Hence $M_1$ and $M_2$ are both linear combination of $ac-bd$ and $ad+bc$. In particular, there exist $s,t \in \mathbb{R}$ such that 
\[
M_1=s (ac-bd)+t (ad+bc).
\]
But  as $M_1$  can only involve one multiplication, it must have the form
\[
M_1=(s_ 1 a +s_2b + s_3 c + s_4 d) (t_1a +t_2b + t_3c + t_4d).
\]
Without loss of generality we may assume that $s_1=0$ and $t_1=1$. Then $s_2=0$, $s_3=s$, and $s_4=t$. These imply that $t_2=t/s$ and $t_3=t_4=0$ and thus
\[
M_1=(s c + t d)\left(a+\frac{t}{s} b\right),
\] 
giving us $t^2/s=-s$, a contradiction since both $s$ and $t$ are real.
\end{proof}

We show a somewhat unexpected link between Proposition~\ref{prop:rank real skew-circulant} and Example~\ref{eg:cplx}.
\begin{corollary}
The rank and border rank of $\mu_{\mathbb{C}}$, the structure tensor of $\mathbb{C}$ as an $\mathbb{R}$-algebra, or equivalently, the structure tensor of the $\mathbb{R}$-bilinear map 
\[
\beta:\mathbb{C}\times \mathbb{C}\to \mathbb{C}, \quad (a+bi, c + di) \mapsto (ac -bd, ad +bc),
\]
are both three.
\end{corollary}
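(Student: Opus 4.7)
The plan is to transfer Proposition~\ref{prop:rank real skew-circulant} to $\mu_{\mathbb{C}}$ via an explicit change of bases, and then handle the border rank separately.

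First, I would set up an $\mathbb{R}$-linear equivalence between the complex multiplication $\beta$ and the skew-circulant matrix-vector product $\gamma$ of Proposition~\ref{prop:rank real skew-circulant}. Since
\[
\begin{bmatrix} a & b \\ -b & a \end{bmatrix} \begin{bmatrix} c \\ -d \end{bmatrix} = \begin{bmatrix} ac-bd \\ -(ad+bc) \end{bmatrix}
\qquad\text{and}\qquad
(a+bi)(c+di) = (ac-bd) + (ad+bc)i,
\]
the maps
\[
\phi_U: \begin{bmatrix} a & b \\ -b & a \end{bmatrix} \longmapsto a+bi, \qquad
\phi_V, \phi_W: \begin{bmatrix} x \\ y \end{bmatrix} \longmapsto x-yi
\]
are invertible $\mathbb{R}$-linear maps that intertwine $\gamma$ and $\beta$. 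Consequently, $\mu_{\mathbb{C}}$ and $\mu_\gamma$ are related by an invertible linear change on each of the three tensor factors, which preserves both rank and border rank. Proposition~\ref{prop:rank real skew-circulant} then gives $\rank(\mu_{\mathbb{C}}) = \rank(\mu_\gamma) = 3$ at once.

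For the border rank we automatically have $\brank(\mu_{\mathbb{C}}) \le \rank(\mu_{\mathbb{C}}) = 3$. Proposition~\ref{prop:border rank equals rank} does not apply here because its hypothesis would force $\rank(\mu_{\mathbb{C}}) = \dim W = 2$, which fails. Instead, I plan to use the pencil of frontal slices of $\mu_{\mathbb{C}}$ read off from \eqref{eq:cplx}, namely $S_1 = \left[\begin{smallmatrix} 1 & 0 \\ 0 & -1 \end{smallmatrix}\right]$ and $S_2 = \left[\begin{smallmatrix} 0 & 1 \\ 1 & 0 \end{smallmatrix}\right]$. For any real tensor $T = u_1 \otimes v_1 \otimes w_1 + u_2 \otimes v_2 \otimes w_2$ of rank at most two, the slice pencil in $x = (x_1, x_2)$ equals $(w_1\cdot x) u_1 v_1^\mathsf{T} + (w_2 \cdot x) u_2 v_2^\mathsf{T}$, whose determinant, by the standard $2 \times 2$ identity $\det(u_1 z_1^\mathsf{T} + u_2 z_2^\mathsf{T}) = \det[u_1 \mid u_2]\det[z_1 \mid z_2]$, equals $\det[u_1 \mid u_2]\,\det[v_1 \mid v_2]\,(w_1\cdot x)(w_2\cdot x)$. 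This is a binary quadratic form that factors over $\mathbb{R}$, hence has nonnegative discriminant. Nonnegativity of the discriminant is a Euclidean-closed polynomial condition on the tensor entries, so it persists for every tensor of border rank $\le 2$. But $\det(x_1 S_1 + x_2 S_2) = -(x_1^2 + x_2^2)$ has discriminant $-4 < 0$, and therefore $\brank(\mu_{\mathbb{C}}) \ge 3$.

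The main obstacle is the border rank lower bound; by contrast, the rank part is immediate once the change-of-basis identification with $\gamma$ is in place. Should the discriminant-closedness argument prove cumbersome to write, an alternative is a limit-based refinement of the rank argument from Proposition~\ref{prop:rank real skew-circulant}: assuming $\mu_{\mathbb{C}} = \lim_{n\to\infty} T_n$ with $\rank(T_n) = 2$, one would chase the scalars $s, t$ appearing in that proof through the limit and extract sequences $s_n, t_n$ with $s_n^2 + t_n^2 \to 0$ while other coefficients remain bounded away from zero, reproducing the $t^2/s = -s$ contradiction over $\mathbb{R}$.
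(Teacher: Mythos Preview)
Your rank argument is exactly the paper's: identify complex multiplication with the $2\times 2$ real skew-circulant product via invertible linear maps on each factor and invoke Proposition~\ref{prop:rank real skew-circulant}.

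For the border rank, your route differs from the paper's. The paper simply cites \cite[Theorem~7.1]{de Silva/Lim}, which handles $2\times 2\times 2$ real tensors directly. You instead give a self-contained slice-pencil argument: for any real rank-$\le 2$ tensor the determinant of the frontal-slice pencil factors into real linear forms, so its discriminant is nonnegative; this is a Euclidean-closed condition and therefore passes to border rank $\le 2$; but $\det(x_1S_1+x_2S_2)=-(x_1^2+x_2^2)$ has discriminant $-4$. This is correct and is in fact the mechanism underlying the cited result of de~Silva--Lim, so what you gain is a fully self-contained proof at the cost of a few extra lines. One small point worth making explicit when you write it up: the degenerate cases $\det[u_1\mid u_2]=0$ or $\det[v_1\mid v_2]=0$ (and rank $\le 1$) give the identically zero form, whose discriminant is $0$, so the inequality still holds there. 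Your fallback limit-based plan is plausible but unnecessary given that the discriminant argument already works cleanly.
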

\begin{proof}
Let $z_1 = a + bi$ and $z_2 = c+di$. If we identify $\mathbb{C}$ with $\mathbb{R}^2$, then $z_1z_2=Xv$ where $X$ and $v$ are as defined in Proposition~\ref{prop:rank real skew-circulant}. From which it is clear that the structure tensor $\mu_{\mathbb{C}} \in (\mathbb{R}^2)^*\otimes (\mathbb{R}^2)^*\otimes \mathbb{R}^2$ has rank three over $\mathbb{R}$. The conclusion regarding border rank follows from \cite[Theorem 7.1]{de Silva/Lim}.
\end{proof}
One may check that the optimal algorithm for skew-circulant matrix-vector product in \eqref{eq:gauss2} is in fact the same as Gauss's method for multiplication of complex numbers \eqref{eq:gauss}.

\section{Toeplitz matrices}\label{sec:toep}

Let $\Toep_n(\mathbb{C})$ be the vector space of $n\times n$ Toeplitz matrices. The following  result is well-known, proved in \cite{Bini/Capovani} using methods different from those we employ below. Our objective of including this is to provide another illustration of the generalized Cohn--Umans approach where a bilinear operation is embedded in an algebra, in this case, the algebra of circulant matrices  $\Circ_{2n}(\mathbb{C})$ in Section~\ref{sec:circulant}.
\begin{theorem}[Bini--Capovani]\label{thm:toeplitz rank}
Let $\beta_t : \Toep_n(\mathbb{C}) \times \mathbb{C}^n \to \mathbb{C}^n$ be the Toeplitz matrix-vector product. Let $\mu_{t} \in \Toep_n(\mathbb{C})^*\otimes (\mathbb{C}^n)^* \otimes \mathbb{C}^n$ be the structure tensor of $\beta_t$. Then
\[
\rank(\mu_{t})=2n-1.
\]
\end{theorem}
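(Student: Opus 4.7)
The plan is to show $\rank(\mu_t) = 2n-1$ by sandwiching between matching upper and lower bounds, both equal to $\dim \Toep_n(\mathbb{C}) = 2n-1$.

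For the lower bound, I will apply Proposition~\ref{prop:rank lower bound} in the form where the role of $W$ is played by $U^* = \Toep_n(\mathbb{C})^*$. That is, I view $\mu_t$ as a contraction $\mathbb{C}^n \otimes (\mathbb{C}^n)^* \to \Toep_n(\mathbb{C})^*$ and show it is surjective. Parametrize a Toeplitz matrix as $T = (t_{i-j})_{i,j=0}^{n-1}$ with $2n-1$ free entries $t_{-(n-1)}, \dots, t_{n-1}$. For each $k$ in this range, choose indices $i,j$ with $i-j=k$; then the pairing $e_j \otimes e_i^*$ reads off $e_i^\mathsf{T} T e_j = t_k$. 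These $2n-1$ coordinate functionals span $\Toep_n(\mathbb{C})^*$, so $\rank(\mu_t) \geq 2n-1$.

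For the upper bound, I will invoke the generalized Cohn--Umans diagram~\eqref{eq:gentpp1} to embed the Toeplitz matrix-vector product into the circulant matrix-vector product $\beta_{c,2n-1}$ of size $2n-1$, whose structure tensor has rank $2n-1$ by Proposition~\ref{prop:circulant}. Define $j:\Toep_n(\mathbb{C})\otimes \mathbb{C}^n \hookrightarrow \Circ_{2n-1}(\mathbb{C})\otimes \mathbb{C}^{2n-1}$ by sending $T$ to the circulant matrix $C$ whose first column is
\[
(t_0,t_1,\dots,t_{n-1},t_{-(n-1)},t_{-(n-2)},\dots,t_{-1})^\mathsf{T},
\]
and $x \in \mathbb{C}^n$ to its zero-padding $\tilde{x}\in \mathbb{C}^{2n-1}$; let $\proj:\mathbb{C}^{2n-1}\to \mathbb{C}^n$ project onto the first $n$ coordinates. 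A direct index computation using $C_{ij} = c_{(i-j)\bmod (2n-1)}$ shows that for $0 \le i \le n-1$, one has $(C\tilde{x})_i = \sum_{j=0}^{n-1} t_{i-j} x_j = (Tx)_i$, so the diagram commutes. Hence $\rank(\mu_t) \leq \rank(\mu_{c,2n-1}) = 2n-1$.

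The only step that requires care is choosing the ambient circulant size to be $2n-1$ rather than the more \textsc{fft}-friendly $2n$: this is precisely the size that matches $\dim \Toep_n(\mathbb{C})$ and therefore yields a tight bound. The index verification amounts to checking that for $-(n-1) \le i-j \le n-1$, the residue $(i-j)\bmod(2n-1)$ lands in the correct slot of the first column (namely $i-j$ if nonnegative, and $2n-1+(i-j)$ otherwise), and that the assigned entry equals $t_{i-j}$ by construction. No deeper obstacle appears; the interest of the result lies less in the proof itself than in the observation that the natural embedding $\Toep_n(\mathbb{C}) \hookrightarrow \Circ_{2n-1}(\mathbb{C})$ is a linear isomorphism which transports the non-algebra bilinear operation $\beta_t$ into multiplication in a semisimple commutative algebra.
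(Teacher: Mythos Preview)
Your proof is correct, and the lower bound argument is identical to the paper's.  For the upper bound, however, the paper takes a slightly different route: rather than embedding $\Toep_n(\mathbb{C})$ into $\Circ_{2n-1}(\mathbb{C})$, it embeds into $\Circ_{2n}(\mathbb{C})$ via the block form
\[
C_{2n}=\begin{bmatrix} X_n & Y_n \\ Y_n & X_n \end{bmatrix},
\]
which introduces one free parameter $y$ on the antidiagonal of $Y_n$.  Choosing $y=-\sum_{i=-(n-1)}^{n-1} x_i$ forces the first diagonal entry of the Wedderburn image of $C_{2n}$ to vanish, so the $2n$-dimensional circulant product collapses to $2n-1$ multiplications.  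Your embedding into $\Circ_{2n-1}(\mathbb{C})$ is more economical---it is a linear isomorphism with no free parameter and no vanishing trick---and gives the bound in one line from Proposition~\ref{prop:circulant}.  The paper's version, on the other hand, is better suited to radix-$2$ \textsc{fft} implementations and illustrates a technique (choosing the free entry to kill a multiplication) that it reuses later, e.g., in the Toeplitz-plus-Hankel argument of Section~\ref{sec:t+h}.
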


\begin{proof}
We begin by observing that there is an embedding of an $n\times n$ Toeplitz matrix $X_n=[x_{j-i}]$ as a subblock of a $2n\times 2n$ circulant matrix $C_{2n}$ as follows
\begin{equation}\label{eq:toepcirc}
C_{2n}=\begin{bmatrix}
X_n & Y_n\\
Y_n & X_n
\end{bmatrix},
\end{equation}
where 
\[
Y_n=\begin{bmatrix}
y & x_{-n+1} & x_{-n+2} & \dots & x_{-1} \\
x_{n-1} & y & x_{-n+1} & \dots & x_{-2}\\
x_{n-2} & x_{n-1} & y & \dots & x_{-3}\\
\vdots & \vdots & \vdots & \ddots & \vdots\\
x_1 & x_2 & x_3 & \dots & y
\end{bmatrix}
\]
and $y \in \mathbb{C}$ can be arbitrarily chosen. We will choose
\[
y=-\sum_{i=-(n-1)}^{n-1} x_i.
\]
and by this choice of $y$, we only need $2n-1$ multiplications to compute the $2n\times 2n$ circulant matrix-vector product. To see this, recall that in our proof of  Proposition~\ref{prop:circulant}, the multiplication of $Y_n$ by a vector of dimension $2n$ is computed by multiplying a pair of $2n\times 2n $ diagonal matrices. In our case, the $2n\times 2n$ diagonal matrix corresponding to $Y_n$ is one whose $(1,1)$th entry is zero. Hence an $n\times n$ Toeplitz matrix-vector product can be computed with just $2n-1$ multiplications and so the rank of the corresponding structure tensor $\mu_{t}$ is at most $2n-1$. On the other hand, by Proposition~\ref{prop:rank lower bound}, $\rank(\mu_{t})$ is at least $2n-1$ since $\mu_{t}(\mathbb{C}^n\otimes (\mathbb{C}^n)^*)$ is the whole of $\Toep_n(\mathbb{C})^*$.
\end{proof}

As in Section~\ref{sec:circulant}, Proposition~\ref{prop:border rank equals rank} and Theorem~\ref{thm:toeplitz rank} together give the corresponding result for border rank.
\begin{corollary}\label{cor:toeplitz rank}
The structure tensor of the Toeplitz matrix-vector product has border rank
\[
\brank(\mu_t) = 2n-1.
\]
\end{corollary}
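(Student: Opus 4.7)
The plan mirrors the one-line argument used at the end of Section~\ref{sec:circulant}. Theorem~\ref{thm:toeplitz rank} already yields $\brank(\mu_t) \le \rank(\mu_t) = 2n-1$, so only the reverse inequality remains, and this should come from a suitable application of Proposition~\ref{prop:border rank equals rank}.

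As stated, Proposition~\ref{prop:border rank equals rank} requires $\rank(\mu_\beta) = \dim W$, which fails here for $W = \mathbb{C}^n$ since $2n-1 > n$. However, its proof only exploits the fact that one of the three flattenings of $\mu_\beta$ is surjective onto a space whose dimension equals $\rank(\mu_\beta)$, and the argument goes through unchanged if the Toeplitz factor plays the role that $W$ plays in the original statement, exactly in the spirit of the ``$W$ may be replaced by $U$ or $V$'' remark appended to Proposition~\ref{prop:rank lower bound}. Concretely, I would flatten $\mu_t$ to the linear map $\phi : \mathbb{C}^n \otimes (\mathbb{C}^n)^* \to \Toep_n(\mathbb{C})^*$ sending $y \otimes \alpha$ to the functional $A \mapsto \alpha(Ay)$, and check that $\phi$ is surjective. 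By duality this is equivalent to the injectivity of the map $\Toep_n(\mathbb{C}) \to \End(\mathbb{C}^n)$, $A \mapsto (y \mapsto Ay)$, which is obvious since a Toeplitz matrix is determined by its action on vectors.

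With $\phi$ surjective, its matrix rank equals $\dim \Toep_n(\mathbb{C}) = 2n-1$. Now a sequence $\{\mu_N\}$ of tensors of rank $r'$ converging to $\mu_t$ induces, via the same flattening, a sequence of linear maps of matrix rank at most $r'$ converging to $\phi$; since for $2$-tensors rank and border rank coincide, this forces $\rank(\phi) \le r'$ and hence $r' \ge 2n-1$. Therefore $\brank(\mu_t) \ge 2n-1$, completing the proof.

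I expect no serious obstacle here; the only subtlety worth flagging is that the permutation of factors is essential, since a direct application of Proposition~\ref{prop:border rank equals rank} with $W = \mathbb{C}^n$ would not even match dimensions. This is precisely what distinguishes the Toeplitz case from the circulant case of Section~\ref{sec:circulant}, where $\dim \Circ_n(\mathbb{C}) = n = \dim \mathbb{C}^n$ permitted a direct application.
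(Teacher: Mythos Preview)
Your proposal is correct and follows the same route the paper intends: combine Theorem~\ref{thm:toeplitz rank} with the matrix-rank/lower-semicontinuity argument underlying Proposition~\ref{prop:border rank equals rank}, flattening onto the $\Toep_n(\mathbb{C})^*$ factor. In fact you have been more careful than the paper's one-line justification, which simply cites Proposition~\ref{prop:border rank equals rank} without noting that its literal hypothesis $\rank(\mu_\beta)=\dim W$ fails for $W=\mathbb{C}^n$; your observation that the Toeplitz factor must play the role of $W$ (consistent with how the lower bound in Theorem~\ref{thm:toeplitz rank} was obtained via $\mu_t(\mathbb{C}^n\otimes(\mathbb{C}^n)^*)=\Toep_n(\mathbb{C})^*$) is exactly the missing remark.
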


We embedded $\Toep_n(\mathbb{C})$ into $\Circ_{2n}(\mathbb{C})\simeq \mathbb{C}[C_{2n}]$ and the Toeplitz matrix-vector product inherits a group theoretic interpretation via this embedding. Given $X=[x_{j-i}] \in \Toep_n(\mathbb{C})$ and a vector $z=[z_1,\dots, z_n]^{\mathsf{T}} \in \mathbb{C}^n$, we may explicitly construct the product $Xz$ as follows. First construct two vectors
\[
a=[x_0,x_1,\dots, x_{n-1}, y , x_{-n+1}, x_{-n+2}, \dots, x_{-1} ]^\mathsf{T}  \in \mathbb{C}^{2n},\qquad
b=[z_1, \dots, z_n,0,\dots, 0]^{\mathsf{T}} \in \mathbb{C}^{2n},
\]
where $y=-\sum_{i=-(n-1)}^{n-1} x_i$. Notice that 
\[
a^\mathsf{T} b=\sum_{i=1}^{n} x_{i-1}z_i
\]
is the first entry of the vector $Xz$. As we had observed in Section~\ref{sec:circulant}, the cyclic group $C_{2n}=\left\langle g\mid g^{2n}=1 \right\rangle$ realizes $\left\langle 1, 2n,1 \right\rangle$. Hence we may construct two elements in $\mathbb{C}[C_{2n}]$, $\widehat{a}$ and $\widehat{b}$ as in Section~\ref{sec:circulant} by
\[
\widehat{a}=\sum_{i=1}^n x_{i-1} g^{i}+ y g^{n+1} + \sum_{i=n+2}^{2n} x_{i-2n-1}g^{i} \in \mathbb{C}[C_{2n}], \qquad
\widehat{b}=\sum_{i=1}^{n} z_i g^{-i} \in \mathbb{C}[C_{2n}].
\]
It is easy to see that coefficients of $g^{2n}, g^{2n-1},\dots, g^{n+1}$ in $\widehat{a}\cdot\widehat{b}$ give the required entries of $Xz$. 

We have seen in Corollary~\ref{cor:circulant inverse} that inverting a circulant matrix can be done with just $n$ divisions. Although we may embed $\Toep_n(\mathbb{C})$ into $\Circ_{2n}(\mathbb{C})$ via \eqref{eq:toepcirc} and $C_{2n}$ may be inverted with $2n$ division, there does not seem to be a way to obtain $X_n^{-1}$ from $C_{2n}^{-1}$.

\change{Suppose we are unaware of the fact that we may embed a Toeplitz matrix into a circulant matrix of larger size, how could we have discovered it?} We now provide an illustration of how Construction~\ref{construction} may be applied systematically to discover the appropriate algebra to use in the generalized Cohn--Umans method for Toeplitz matrix-vector product. Let $\{T_k \in \mathbb{C}^{n \times n}: k=1,\dots, 2n-1\}$ be the standard basis for the space of Toeplitz matrices  $\Toep_n(\mathbb{C})$, i.e., the entries of $T_k=[t_{ij}]$ are
\[
t_{ij}=
\begin{cases}
0 &\textit{if} \; j-i\ne k,\\
1 &\textit{if} \; j-i=k.
\end{cases}
\]  
Let $\{e_i \in \mathbb{C}^n :  i=1,\dots, n\}$ be the standard basis of $\mathbb{C}^n$. Then 
\[
T_k e_i=e_{n-k+i},
\]
where $e_{i}\coloneqq0$ whenever $i \ge n+1$ or $i \le 0$. As described in Section~\ref{sec:cohn}, we start from the free algebra  $\mathbb{C}\langle x_1,\dots, x_{2n-1},y_1,\dots,y_n \rangle$ and let $I_0$ be the ideal generated by the relations
\[
x_ky_i=x_{k'}y_{i'} \quad \textit{whenever}\quad 0\le k-i=k'-i'\le n-1,
\]
or, equivalently,
\begin{equation}\label{eqn:generator of I_0}
x_ky_1=x_{k+i-1}y_{i} \quad \textit{for all} \quad 1\le i \le n,\; 1\le k \le 2n-i.
\end{equation}

Next we construct an ideal $I$ such that (i) $I_0\subset I$, (ii) $I$ does not contain any linear form in $x_i$ or $y_j$, and (iii) whenever $F=L(x_ky_i)$ where $L$ is a linear form and at least one $x_ky_i$ appears in $F$ for some $0\le k-i\le n-1$, then $F\in I_0$. Without loss of generality, we may assume that $y_1=1$  and so \eqref{eqn:generator of I_0} simplifies to
\[
x_k=x_{k+i-1}y_i \quad \textit{for all} \quad 1\le i \le n, \; 1\le k \le 2n-i.
\]
A moment's thought would then lead us to taking $x_k=x^k$ where $x$ is such that $x^{2n}=1$, and also $y_i=x^{-i+1}=x^{2n-i+1}$ for $i=1,\dots, n$. It is straightforward to check the restrictions we imposed on $I$ are satisfied by these choices, which yield the algebra $\mathbb{C}[x]/(x^{2n}-1) \simeq \mathbb{C}[C_{2n}]$ that we seek.

We end this section with a brief word on Toeplitz matrix-Toeplitz matrix product $\beta_T : \Toep_n(\mathbb{C}) \times \Toep_n(\mathbb{C}) \to \mathbb{C}^{n \times n}$. Note that $\Toep_n(\mathbb{C})$ is not closed under matrix multiplication \cite{KL1}. The corollary below follows from Theorem~\ref{thm:toeplitz rank} and the fact that $XY = [Xy_1,\dots,Xy_n]$ for $X,Y \in \Toep_n(\mathbb{C})$ where $y_i$ is the $i$th column of $Y$.
\begin{corollary}\label{cor:toepliz mult}
The restriction  of the matrix multiplication tensor $\mu_{n,n,n}$ to the space $\Toep_n(\mathbb{C})$ of $n\times n$ Toeplitz matrices, regarded as a tensor in $\Toep_n(\mathbb{C})^*\otimes \Toep_n(\mathbb{C})^*\otimes \mathbb{C}^{n \times n}$, has rank  at most $n(2n-1)$.
\end{corollary}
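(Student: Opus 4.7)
The plan is to combine Theorem~\ref{thm:toeplitz rank} with the column-by-column description of matrix multiplication already indicated in the statement. Writing $Y = [y_1,\dots,y_n] \in \Toep_n(\mathbb{C})$ in terms of its columns $y_i \in \mathbb{C}^n$, we have $XY = [Xy_1,\dots,Xy_n]$, so each column of $XY$ is a Toeplitz matrix-vector product sharing the common Toeplitz factor $X$. Since we are aiming only for an upper bound on rank, it suffices to concatenate the $n$ individual Bini--Capovani algorithms; this alone should give the count $n \cdot (2n-1)$.

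To express this as a genuine tensor decomposition, I would promote a rank decomposition of $\mu_t$ to one of the restricted multiplication tensor $\mu_T \in \Toep_n(\mathbb{C})^*\otimes \Toep_n(\mathbb{C})^* \otimes \mathbb{C}^{n\times n}$. By Theorem~\ref{thm:toeplitz rank}, fix
\[
\mu_t = \sum_{k=1}^{2n-1} \phi_k \otimes \psi_k \otimes w_k,
\]
with $\phi_k \in \Toep_n(\mathbb{C})^*$, $\psi_k \in (\mathbb{C}^n)^*$, $w_k \in \mathbb{C}^n$. For each $i=1,\dots,n$, let $c_i \colon \Toep_n(\mathbb{C}) \to \mathbb{C}^n$ be the linear map extracting the $i$th column, and let $E_i \colon \mathbb{C}^n \to \mathbb{C}^{n\times n}$ be the linear map placing a vector into the $i$th column with zeros elsewhere. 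The identity $XY = \sum_{i=1}^n E_i(X\, c_i(Y))$ translates directly into
\[
\mu_T = \sum_{i=1}^{n}\sum_{k=1}^{2n-1} \phi_k \otimes (\psi_k \circ c_i) \otimes E_i(w_k),
\]
where $\psi_k \circ c_i \in \Toep_n(\mathbb{C})^*$ and $E_i(w_k) \in \mathbb{C}^{n\times n}$. This exhibits $\mu_T$ as a sum of $n(2n-1)$ rank-one terms, yielding $\rank(\mu_T) \le n(2n-1)$.

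There is no significant obstacle: the whole argument is a bookkeeping exercise that lifts the column-wise reduction to the level of structure tensors. The only thing worth flagging is that this argument does not attempt to exploit the fact that $X$ is shared across all $n$ columns (which could in principle allow cancellations and a sharper bound), so the result is genuinely an upper bound rather than an equality.
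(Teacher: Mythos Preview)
Your proof is correct and follows exactly the approach the paper indicates: the paper simply notes that the corollary follows from Theorem~\ref{thm:toeplitz rank} together with the column-by-column identity $XY = [Xy_1,\dots,Xy_n]$. Your version adds the explicit tensor decomposition via the maps $c_i$ and $E_i$, which is a helpful elaboration but not a different idea.
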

From the perspective of iterative methods for Toeplitz matrices (both linear systems and least squares), understanding $\beta_t$ is more important than understanding $\beta_T$.

\section{Hankel matrices}\label{sec:hank}

The results in this short section follows from those in Section~\ref{sec:toep}. However we state them explicitly as these results on Hankel matrices are crucial for those on \textit{symmetric} matrices in Section~\ref{sec:symm}, which might come as a surprise. 

We introduce a few notations that we will use in Section~\ref{sec:symm}. Given a vector $x=[x_0,x_1,\dots,x_{2n-1}]^\mathsf{T} \in \mathbb{C}^{2n}$, we let 
\begin{equation}\label{eq:hank}
\Hank(x) \coloneqq 
\begin{bmatrix}
x_0 & x_1 & \dots & \dots & x_{n-2} & x_{n-1}\\
x_1 & \dots & \dots &\dots  & x_{n-1} & x_n\\
\vdots & \vdots & \vdots & \vdots & \vdots  &\vdots \\
x_{n-2} & x_{n-1} & \dots & \dots  & \dots & x_{2n-2}\\
x_{n-1} & x_n & \dots & \dots  &x_{2n-2} & x_{2n-1}
\end{bmatrix} \in  \mathbb{C}^{n \times n}
\end{equation}
be the Hankel matrix defined by $x$.
Let $\Hank_n(\mathbb{C})$ denote the vector space of $n\times n$ Hankel matrices. 

The corresponding results for Hankel matrices may be obtained from the ones for Toeplitz matrices essentially via the \change{well-known observation \cite[Theorem~2.1.5]{Pan}} that $X \in \mathbb{C}^{n \times n}$ is a Hankel matrix if and only if $JX$ and $XJ$ are both Toeplitz matrices. Here $J$ is the permutation matrix
\[
J\coloneqq\begin{bmatrix}
0 & 0 & \cdots & 0 & 1\\
0 & 0 & \cdots & 1 & 0\\
\vdots & \vdots & \ddots & \vdots & \vdots\\
0 & 1 & \cdots & 0 & 0\\
1 & 0 & \cdots &  0 & 0
\end{bmatrix} \in \mathbb{C}^{n \times n}.
\]
Since $J$ is a nonsingular linear transformation, $\mu_{h}$ and $\mu_{t}$ must have the same rank and border rank and we obtain the following from Theorem~\ref{thm:toeplitz rank}, Corollary~\ref{cor:toeplitz rank}, and Corollary~\ref{cor:toepliz mult}.
\begin{corollary}\label{cor:rank of Hankel matrix-vector product}
Let $\mu_{h} \in \Hank_n(\mathbb{C})^*\otimes (\mathbb{C}^n)^* \otimes \mathbb{C}^n$ be the structure tensor of the Hankel matrix-vector product $\beta_h : \Hank_n(\mathbb{C}) \times \mathbb{C}^n\to \mathbb{C}^n$.  Then
\[
\rank(\mu_{h}) =\brank(\mu_{h})=2n-1.
\]
Let $\mu_{H} \in \Hank_n(\mathbb{C})^*\otimes \Hank_n(\mathbb{C})^* \otimes \mathbb{C}^n$ be the structure tensor of the Hankel matrix-Hankel matrix product $\beta_H : \Hank_n(\mathbb{C}) \times \Hank_n(\mathbb{C})\to \mathbb{C}^{n \times n}$.  Then
\[
\rank(\mu_{H}) \le n(2n-1).
\] 
\end{corollary}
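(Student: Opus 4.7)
The plan is to reduce everything to the Toeplitz case via the reversal-matrix bijection $X \mapsto JX$ between $\Hank_n(\mathbb{C})$ and $\Toep_n(\mathbb{C})$ recalled in the excerpt. Because $J$ is a permutation matrix with $J^2 = I$, multiplication by $J$ only reorders entries and so incurs no cost in the bilinear-complexity sense.

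First I would establish $\rank(\mu_h) \le 2n-1$ by writing $Xy = J((JX)y)$ for any Hankel $X$ and vector $y$. Since $JX \in \Toep_n(\mathbb{C})$, the inner product $(JX)y$ can be computed with $2n-1$ multiplications by Theorem~\ref{thm:toeplitz rank}, and the outer $J$ adds nothing. For the matching lower bound I would invoke Proposition~\ref{prop:rank lower bound} with the role of $W$ replaced by $\Hank_n(\mathbb{C})^*$: the slice $\mu_h(e_j \otimes e_i^*)$ is the linear functional $X \mapsto X_{ij} = x_{i+j-2}$ picking out the $(i+j-2)$-th anti-diagonal coefficient. As $(i,j)$ ranges over $\{1,\dots,n\}^2$, the index $i+j-2$ sweeps through $\{0,1,\dots,2n-2\}$, so these functionals span $\Hank_n(\mathbb{C})^*$ and hence $\rank(\mu_h) \ge \dim \Hank_n(\mathbb{C})^* = 2n-1$.

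For the border rank equality I would then apply Proposition~\ref{prop:border rank equals rank} with the same role swap: the preceding argument shows that $\mu_h$, viewed as a linear map $\mathbb{C}^n \otimes (\mathbb{C}^n)^* \to \Hank_n(\mathbb{C})^*$, is surjective onto a target of dimension $2n-1 = \rank(\mu_h)$, and $n \cdot n \ge 2n-1$ for all $n \ge 1$. Hence $\brank(\mu_h) = \rank(\mu_h) = 2n-1$.

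Finally, for the Hankel--Hankel product $\beta_H$, I would use the column-by-column reduction $XY = [Xy_1,\dots,Xy_n]$ where $y_i$ is the $i$-th column of $Y$, exactly as in Corollary~\ref{cor:toepliz mult}. Each $Xy_i$ is a Hankel matrix-vector product costing at most $2n-1$ multiplications, yielding $\rank(\mu_H) \le n(2n-1)$. The only mildly delicate step in the whole argument is identifying the correct role swap for Proposition~\ref{prop:border rank equals rank} (since $\dim W = n \ne 2n-1$, the proposition cannot be applied in its default configuration); everything else is a direct transport of the Toeplitz results through the permutation $J$.
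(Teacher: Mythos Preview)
Your proposal is correct and follows essentially the same route as the paper: reduce to the Toeplitz case via the reversal matrix $J$, then invoke Theorem~\ref{thm:toeplitz rank}, Corollary~\ref{cor:toeplitz rank}, and the column-by-column argument of Corollary~\ref{cor:toepliz mult}.

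The one minor difference is in how the lower bound and border rank are handled. The paper argues in a single stroke that since $J$ is a nonsingular linear transformation, the tensors $\mu_h$ and $\mu_t$ are related by an element of $\GL(\Hank_n^*) \times \GL((\mathbb{C}^n)^*) \times \GL(\mathbb{C}^n)$, and hence have identical rank and border rank; all three conclusions then follow immediately from the Toeplitz results. You instead transport only the upper bound $\rank(\mu_h)\le 2n-1$ through $J$, and then re-derive the lower bound and border rank equality directly for the Hankel tensor by applying Propositions~\ref{prop:rank lower bound} and~\ref{prop:border rank equals rank} with the factor $\Hank_n(\mathbb{C})^*$ playing the role of $W$. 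Your version is a bit more explicit (and your observation that the role swap is needed for Proposition~\ref{prop:border rank equals rank} since $\dim W = n \ne 2n-1$ is correct and worth flagging), while the paper's is more economical since it avoids redoing any of the Toeplitz analysis.
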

Since $\Hank_n(\mathbb{C})=J\Toep_n(\mathbb{C})=\Toep_n(\mathbb{C})J$, one expects a group theoretic realization of the Hankel matrix-vector multiplication. The construction is similar to that of the Toeplitz case.

\section{Triangular Toeplitz/Hankel matrices}\label{sec:triangular}

We include a discussion of triangular Toeplitz (or Hankel) matrix-vector product because the result may be somewhat unexpected --- its optimal bilinear complexity is exactly the same as that of a general Toeplitz (or Hankel) matrix-vector product. The fact that half the entries are zeros cannot be exploited to reduce the number of multiplications in an algorithm. Contrast this with Corollaries~\ref{cor:mat-vec} and \ref{cor:triangular}. Our methods in this section are new but the results are not, they follow from the work of Bini and Capovani \cite{Bini/Capovani}.

Let $\Toep_n^\Delta(\mathbb{C})$ be the linear space of $n\times n$ upper triangular Toeplitz matrices and let  $\beta_{\Delta} : \Toep_n^\Delta(\mathbb{C})\times \mathbb{C}^n \to \mathbb{C}^n$, $(A,v)\mapsto Av$ denote the upper triangular Toeplitz matrix-vector product.
We claim that the algebra $\mathcal{A}=\mathbb{C}[x]/(x^n)$ realizes $\beta_\Delta$. To see this, let
\[
A=\begin{bmatrix}
a_0 & a_1 & \dots & a_{n-1}\\
0    & a_0 & \dots & a_{n-2}\\
\vdots & \vdots & \ddots & \vdots\\
0 & 0 & \dots & a_0
\end{bmatrix} \in \Toep_n^\Delta(\mathbb{C}) \qquad \text{and}\qquad v=\begin{bmatrix}
v_0\\
v_1\\
\vdots \\
v_{n-1}
\end{bmatrix} \in  \mathbb{C}^n,
\]
we have 
\[
Av= \begin{bmatrix*}[r]
a_0 v_0 + a_1v_1 + \cdots + a_{n-1} v_{n-1}\\
a_0 v_1 + \cdots + a_{n-2} v_{n-1}\\
\ddots \qquad \qquad \\
a_0 v_{n-1}
\end{bmatrix*} \in  \mathbb{C}^n.
\]
Let $A_0,A_1,\dots,A_{n-1}$ be the `obvious' basis of $\Toep_n^\Delta(\mathbb{C})$, i.e., the $(i,j)$th entry of $A_k$ is one when $j-i=k$ and zero otherwise. Let $e_0,\dots, e_{n-1}$ be the standard basis of $\mathbb{C}^n$. We define an embedding of $\Toep_n^\Delta(\mathbb{C})\otimes \mathbb{C}^n$ into $\mathbb{C}[x]/(x^n) \otimes \mathbb{C}[x]/(x^n)$ taking the bases elements
\[
A_i\mapsto x^i, \qquad e_i\mapsto x^{n-1-i}, \qquad i =0,1,\dots,n-1.
\]
For $A=\sum_{i=0}^{n-1} a_i A_i\in \Toep_n^\Delta(\mathbb{C})$ and $v=\sum_{i=0}^{n-1} v_i e_i\in \mathbb{C}^n$, the images $\widehat{A},$ $\widehat{v}\in \mathbb{C}[x]/(x^n)$ are given by
\[
\widehat{A}=a_0 1 + a_1 x + \cdots + a_{n-1} x^{n-1},\qquad
\widehat{v}= v_0 x^{n-1} + v_1 x^{n-2}+ \cdots + v_{n-1}1.
\]
It is straightforward to verify that $\mathbb{C}[x]/(x^n)$ realizes $\beta_\Delta$. Note that $\mathbb{C}[x]/(x^n)$ is the cohomology ring of the complex projective space $\mathbb{CP}^{n-1}$. In particular it contains nilpotent elements and is not semisimple.

By Theorem~\ref{thm:Winograd}, the structure tensor of $\mathbb{C}[x]/(x^n)$ has rank $2n-1$, from which we may deduce the following.
\begin{theorem}\label{thm:upper triangular Toeplitz matrix-vector product}
Let $\mu_\Delta \in  \Toep_n^\Delta(\mathbb{C})^*\otimes (\mathbb{C}^n )^*\otimes \mathbb{C}^n$  be the structure tensor of the upper triangular Toeplitz matrix-vector product $\beta_\Delta$. Then
\[
\rank(\mu_\Delta) = 2n-1.
\]
\end{theorem}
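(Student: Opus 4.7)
The plan is to show that the structure tensor $\mu_\Delta$ is equivalent, under invertible linear changes of coordinates on each of its three tensor factors, to the structure tensor $\mu_{\mathbb{C}[x]/(x^n)}$ of the algebra $\mathbb{C}[x]/(x^n)$. Since tensor rank is invariant under such invertible linear maps (a special case of the proposition cited from \cite{de Silva/Lim}), this reduces the computation of $\rank(\mu_\Delta)$ to that of $\rank(\mu_{\mathbb{C}[x]/(x^n)})$, for which Theorem~\ref{thm:Winograd} applies.

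First I would verify that the vector-space embedding $j$ defined before the statement, namely $A_i\mapsto x^i$ on $\Toep_n^\Delta(\mathbb{C})$ and $e_i\mapsto x^{n-1-i}$ on $\mathbb{C}^n$, is actually a linear \emph{isomorphism} on each factor. This is immediate from dimension counting, since all three vector spaces $\Toep_n^\Delta(\mathbb{C})$, $\mathbb{C}^n$, and $\mathbb{C}[x]/(x^n)$ have dimension $n$, and $j$ sends a basis to a basis.

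Next I would identify the projection $\proj\colon \mathbb{C}[x]/(x^n)\to \mathbb{C}^n$ that closes the Cohn--Umans diagram \eqref{eq:gentpp0}. A direct calculation shows that for $A=\sum a_iA_i$ and $v=\sum v_je_j$, the $k$-th entry of $Av$ equals the coefficient of $x^{n-1-k}$ in $\widehat{A}\cdot\widehat{v}\in\mathbb{C}[x]/(x^n)$. Hence $\proj$ is the coefficient-reversal map $\sum c_kx^k\mapsto\sum c_ke_{n-1-k}$, which is again a linear isomorphism. In particular, no ``junk terms'' appear: the source and target dimensions of $m_\mathcal{A}$ match those of $\beta_\Delta$ exactly.

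With $j$ and $\proj$ both invertible, the commutativity of the diagram means $\mu_\Delta$ and $\mu_{\mathbb{C}[x]/(x^n)}$ agree after applying invertible linear maps to each of the three tensor factors. Tensor rank is preserved by such transformations, so $\rank(\mu_\Delta)=\rank(\mu_{\mathbb{C}[x]/(x^n)})$. I then invoke Theorem~\ref{thm:Winograd} with $p(x)=x^n$: the complete factorization has a single irreducible factor $x$ (so $q=1$) of degree $n$, giving rank $2n-q=2n-1$. The only real subtlety, and the step that deserves care in writing up, is confirming that both $j$ and $\proj$ are isomorphisms rather than merely an injection and a surjection --- only then can we conclude equality of ranks rather than just the inequality $\rank(\mu_\Delta)\le\rank(\mu_{\mathbb{C}[x]/(x^n)})$ provided by the general Cohn--Umans framework of Section~\ref{sec:cohn}.
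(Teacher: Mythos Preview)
Your proof is correct and follows essentially the same route as the paper: realize $\beta_\Delta$ via the algebra $\mathbb{C}[x]/(x^n)$ using the embedding $A_i\mapsto x^i$, $e_i\mapsto x^{n-1-i}$, and then invoke Winograd's theorem with $p(x)=x^n$ to obtain $2n-1$. The paper's argument is terser---it simply states that the realization together with Theorem~\ref{thm:Winograd} yields the result---whereas you make explicit the point that $j$ and $\proj$ are genuine linear \emph{isomorphisms} (all three spaces having dimension $n$), so that $\mu_\Delta$ and $\mu_{\mathbb{C}[x]/(x^n)}$ differ only by invertible linear maps on each factor and hence have equal rank. This is a worthwhile clarification, since the generic Cohn--Umans framework of Section~\ref{sec:cohn} only delivers the inequality $\rank(\mu_\Delta)\le\rank(\mu_{\mathcal{A}})$; the lower bound here really does rest on the absence of junk terms, i.e., on $\proj$ being invertible, exactly as you emphasize.
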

Since $\Toep_n^\Delta(\mathbb{C})$ is a linear subspace of $\Toep_n(\mathbb{C})$, the structure tensor of upper triangular Toeplitz matrix-vector product is a projection of the structure tensor of Toeplitz matrix-vector product. However the tensor ranks of the two structure tensors are both $2n-1$.

\section{Toeplitz-plus-Hankel matrices}\label{sec:t+h}

Let $S_1$ and $S_2$ be two linear subspaces of $\mathbb{C}^{n\times n}$. Then the set $S_1 + S_2 = \{ X_1+ X_2 \in \mathbb{C}^{n \times n} : X_1 \in S_1, \; X_2 \in S_2\}$ is clearly also a linear subspace. If the structure tensors of the matrix-vector product for $S_1$ and $S_2$ have ranks $r_1$ and $r_2$ respectively, one might guess that the structure tensor of the matrix-vector product for $S_1+S_2$ has rank $r_1 + r_2$. However this is not true as we will see below.
\begin{example}
Let $\Toep_n^\Delta(\mathbb{C})$ be the linear subspace of upper-triangular Toeplitz matrices as in Section~\ref{sec:triangular}. Let  $\Toep_{n}^\Delta (\mathbb{C})^\mathsf{T}$ be the linear subspace of lower triangualr Toeplitz matrices. Clearly,
\[
\Toep_n^\Delta(\mathbb{C}) + \Toep_{n}^\Delta (\mathbb{C})^\mathsf{T}  = \Toep_n(\mathbb{C}).
\]
However, by Theorems~\ref{thm:toeplitz rank} and \ref{thm:upper triangular Toeplitz matrix-vector product}, the structure tensors of $\Toep_n^\Delta(\mathbb{C})$, $\Toep_{n}^\Delta (\mathbb{C})^\mathsf{T}$, and $\Toep_n(\mathbb{C})$ all have the same rank $2n-1$.  
\end{example}

In the special case $S_1 = \Toep_n(\mathbb{C}) $ and $S_2 = \Hank_n(\mathbb{C})$, a matrix in $S_1 + S_2$ is often called a \textit{Toeplitz-plus-Hankel} matrix \cite{Ng, Strang}. We show that the value of its rank is one less than the naive guess. 
\begin{proposition}
The structure tensor of the Toeplitz-plus-Hankel matrix-vector product has rank $4n-3$.
\end{proposition}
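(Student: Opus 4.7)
The plan is to split the proof into an upper and a lower bound, both of which involve a delicate one-multiplication accounting.

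For the upper bound $\rank(\mu)\le 4n-3$, I would use the generalized Cohn--Umans framework built from the Toeplitz and Hankel algorithms already established in Theorem~\ref{thm:toeplitz rank} and Corollary~\ref{cor:rank of Hankel matrix-vector product}. Given $A\in\Toep_n(\mathbb{C})+\Hank_n(\mathbb{C})$, decompose $A=T+H$; since $HJ$ is Toeplitz by the discussion preceding Corollary~\ref{cor:rank of Hankel matrix-vector product}, set $T''=HJ$ so that
\[
(T+H)v \;=\; Tv+T''(Jv).
\]
Each Toeplitz matrix--vector product is computed in the circulant algebra $\Circ_{2n}(\mathbb{C})$ with $2n-1$ multiplications via the free parameter $y$ that nulls one diagonal entry of the diagonalized circulant (as in the proof of Theorem~\ref{thm:toeplitz rank}). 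Applied naively this gives $2(2n-1)=4n-2$ multiplications. The crucial saving of one multiplication exploits the fact that $v$ and $Jv$, extended by $n$ zeros to length $2n$, have identical entry sums, so their DFTs coincide at the zero frequency; the two products $\widehat{T}_0\widehat{v}_0$ and $\widehat{T''}_0\widehat{v}_0$ collapse to a single product $(\widehat{T}_0+\widehat{T''}_0)\widehat{v}_0$. This produces an explicit rank decomposition of $\mu$ with $4n-3$ terms and hence the upper bound. One must also verify that the combined linear functional $\widehat{T}_0+\widehat{T''}_0$ descends from $\Toep_n\oplus\Hank_n$ to the quotient $\Toep_n+\Hank_n$; this is a short check using $XJ=X$ for $X\in\Toep_n\cap\Hank_n$.

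For the lower bound $\rank(\mu)\ge 4n-3$, I would first compute the intersection $\Toep_n\cap\Hank_n$. A matrix that is simultaneously Toeplitz and Hankel satisfies $t_{j-i}=h_{i+j-2}$ for all $i,j$, which forces $t_k$ to depend only on the parity of $k$; hence $\dim(\Toep_n\cap\Hank_n)=2$ and
\[
\dim(\Toep_n(\mathbb{C})+\Hank_n(\mathbb{C}))=2(2n-1)-2=4n-4.
\]
Proposition~\ref{prop:rank lower bound} then only delivers $\rank(\mu)\ge 4n-4$, one shy of the claimed bound. To close this gap I would argue by contradiction: assume a rank decomposition $\mu=\sum_{i=1}^{4n-4} u_i^*\otimes v_i^*\otimes w_i$ with exactly $\dim(\Toep_n+\Hank_n)$ terms. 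Then $\{u_i^*\}$ is forced to be a basis of $(\Toep_n+\Hank_n)^*$, and the accompanying $v_i^*\in(\mathbb{C}^n)^*$ and $w_i\in\mathbb{C}^n$ are constrained by the requirement that the tensor reproduces both the Toeplitz and the Hankel matrix--vector product under restriction to the corresponding $(2n-1)$-dimensional subspace. Pushing this through the two restrictions, each of which must achieve its own sharp rank $2n-1$ (Theorem~\ref{thm:toeplitz rank} and Corollary~\ref{cor:rank of Hankel matrix-vector product}), should force at least $2(2n-1)-(4n-4)=2$ of the decomposition's terms to serve simultaneously as rank-one components of both Toeplitz and Hankel structure tensors. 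Analyzing this shared-term obligation, together with the explicit form of $\Toep_n\cap\Hank_n$ as the span of the two ``parity'' matrices $P_e,P_o$, should produce a contradiction, yielding the desired $\rank(\mu)\ge 4n-3$.

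The main obstacle, and where I expect to spend most of the work, is precisely this $+1$ refinement of the lower bound. The usual flattening inequalities (Proposition~\ref{prop:rank lower bound}) and their consequences cannot see the difference between $4n-4$ and $4n-3$, so one has to exploit the specific algebraic interplay between $\Toep_n$ and $\Hank_n$ inside $\mathbb{C}^{n\times n}$. The key structural fact to leverage is that neither $\Toep_n+\Hank_n$ nor its natural realizing algebras are semisimple of the ``right'' dimension: whereas $\Toep_n$ fits tightly into $\Circ_{2n}(\mathbb{C})$ so that $\rank(\mu_{\Toep})$ equals $\dim\Toep_n$, the sum $\Toep_n+\Hank_n$ admits no such tight embedding, and the overhead of one multiplication in any such embedding is what should translate into the extra $+1$ in the lower bound.
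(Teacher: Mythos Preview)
Your dimension count $\dim(\Toep_n\cap\Hank_n)=2$, and hence $\dim(\Toep_n+\Hank_n)=4n-4$, is correct. The paper's own proof is much shorter than what you propose: it simply asserts that this dimension equals $4n-3$ and invokes Proposition~\ref{prop:rank lower bound} in one line. So you have in fact caught a genuine error in the paper's lower-bound argument. Your instinct that bridging the gap from $4n-4$ to $4n-3$ would require real work is sound; unfortunately the speculative argument you sketch cannot succeed in general, because the proposition itself fails for $n=2$: there $\Toep_2+\Hank_2=\mathbb{C}^{2\times 2}$, and by Corollary~\ref{cor:mat-vec} the rank of the structure tensor is exactly $4=4n-4$, not $5=4n-3$. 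Any refinement of the lower bound that you attempt must therefore break down at $n=2$, and your outline gives no indication of where that would happen.

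For the upper bound the paper also takes a different route from yours. Instead of merging a shared multiplication, it writes $T+H=(T+aE)+(H-aE)$ with $E$ the all-ones matrix, and chooses $a$ (as a linear functional of $T$) so that the circulant embedding of $T+aE$ has \emph{two} vanishing diagonal entries after the DFT; this gives $(2n-2)+(2n-1)=4n-3$ multiplications directly. Your merging idea has a glitch as written: in the proof of Theorem~\ref{thm:toeplitz rank} the free parameter $y$ is chosen precisely to kill the zero-frequency entry, so $\widehat{T}_0=\widehat{T''}_0=0$ already and there is nothing left to merge at frequency zero. The idea can be salvaged by using $y$ to kill a nonzero frequency instead, after which your observation $\widehat{v}_0=\widehat{Jv}_0$ does allow the two frequency-zero products to be combined into one.
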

\begin{proof}
Let $E \in \mathbb{C}^{n \times n}$ be the matrix of all ones. For any $T\in \Toep_n(\mathbb{C})$ and $H\in \Hank_n(\mathbb{C})$ we have 
\[
T + H = (T+aE) + (H-aE)
\]
and  $T+aE \in\Toep_n(\mathbb{C})$, $H -aE \in \Hank_n(\mathbb{C})$ for all $a \in \mathbb{C}$.
We show that we may choose an appropriate $a\in\mathbb{C}$ so that the matrix-vector product for $T+aE$ requires only $2n-2$ multiplications. As in the proof of Theorem~\ref{thm:toeplitz rank}, we may embed $X=T+aE$ into a $2n\times 2n$ circulant matrix 
\[
C_{2n}=
\begin{bmatrix}
X & Y\\
Y & X
\end{bmatrix}
\]
that corresponds to a diagonal matrix whose $(1,1)$th entry is zero. We may choose $a\in\mathbb{C}$ so that the $(2,2)$th entry of this diagonal matrix is also zero. Hence the matrix-vector product with $T+aE$ costs at most $2n-2$ multiplications. Combined with Corollary~\ref{cor:rank of Hankel matrix-vector product}, we see that the structure tensor of  the matrix-vector product for $T+H$ has rank at most $4n-3$. On the other hand, we may check that $\Toep_n(\mathbb{C}) + \Hank_n(\mathbb{C})$ has dimension $4n-3$. So by Proposition~\ref{prop:rank lower bound}, the rank is exactly $4n-3$.
\end{proof}

\section{Block-Toeplitz-Toeplitz-block matrices}\label{sec:bttb}

One of the most common Toeplitz-like structure in numerical linear algebra is that of a \textit{block-Toeplitz-Toeplitz-block} or \textsc{bttb} matrix \cite{Kailath, Chan, Ng}. As the name suggests, these are $nk \times nk$ matrices that are $n\times n$ block Toeplitz matrices whose blocks are themselves $k \times k$ Toeplitz matrices, i.e.,
\[
A=
\begin{bmatrix}
X_0 & X_1 & \cdots & X_{n-2} &  X_{n-1}\\
X_{-1} & X_0 & \cdots & X_{n-3} & X_{n-2}\\
\vdots & \vdots & \ddots  & \vdots & \vdots \\
X_{2-n} & X_{3-n} & \cdots  & X_0 & X_1 \\
X_{1-n} & X_{2-n} & \cdots & X_{-1} &X_0
\end{bmatrix} \in \mathbb{C}^{nk \times nk},
\]
where $X_i \in \Toep_k (\mathbb{C})$ for all $i =-(n-1),\dots,-1,0,1,\dots,n-1$. We write $\bttb_{n,k}(\mathbb{C})$ for the set of $n \times n$ block Toeplitz matrices with $k \times k$ Toeplitz blocks.

There is of course nothing particularly special about the Toeplitz structure --- may also define block-Hankel-Hankel-block or \textsc{bhhb} matrices, block-circulant-circulant-block or \textsc{bccb} matrices, etc. In fact we will establish a general result that holds not only for any block matrices with structured blocks but those with \textit{multiple level block structures} (e.g., block Hankel matrices whose blocks are \textsc{bttb} matrices or block \textsc{bhhb} matrices whose blocks are \textsc{bccb} matrices).

For each $j=1,\dots,s$, let $U_{k_j}$ be a linear subspace of $\mathbb{C}^{k_j\times k_j}$. We define the following linear subspace
\[
 U_{k_1}\circledast \dots \circledast U_{k_s} \subseteq \mathbb{C}^{k_1 \times k_1} \circledast \dots \circledast \mathbb{C}^{k_s \times k_s}
\]
where `$\circledast$' denotes the \textit{Kronecker product} \cite{VL}. Note that
\[
\mathbb{C}^{k_1 \times k_1} \circledast \dots \circledast \mathbb{C}^{k_s \times k_s} = \mathbb{C}^{k_1\cdots k_s \times k_1 \cdots k_s}.
\]

In particular, the linear subspace of \textsc{bttb} matrices is obtained by setting $s = 2$, $k_1 = n$, $k_2 = k$, and $U_{k_1}  =  \Toep_n (\mathbb{C})$, $U_{k_2} =  \Toep_k (\mathbb{C})$, i.e.,
\[
 \Toep_n (\mathbb{C})  \circledast  \Toep_k (\mathbb{C}) = \bttb_{n,k}(\mathbb{C}).
\]
For $s=3$ and $U_{k_i}  =  \Toep_{k_i} (\mathbb{C})$, $i=1,2,3$, we obtain $k_1 \times k_1$ block Toeplitz matrices whose blocks are  $k_2k_3\times k_2k_3$ \textsc{bttb} matrices,
\[
\Toep_{k_1} (\mathbb{C})  \circledast  \Toep_{k_2} (\mathbb{C})  \circledast \Toep_{k_3} (\mathbb{C}) =  \Toep_{k_1} (\mathbb{C})  \circledast  \bttb_{k_2,k_3}(\mathbb{C}).
\]

\begin{lemma}\label{lem:block matrix}
Let $U \subseteq \mathbb{C}^{n \times n}$  and $V \subseteq \mathbb{C}^{k \times k}$ be linear subspaces. Let
\[
\beta_U : U \times \mathbb{C}^n \to \mathbb{C}^n,\qquad \beta_V : V \times \mathbb{C}^k \to \mathbb{C}^k, \qquad \beta_{U  \circledast  V} : (U  \circledast  V) \times \mathbb{C}^{nk} \to \mathbb{C}^{nk}
\]
be the corresponding matrix-vector products with respective structure tensors
\[
\mu_U \in U^* \otimes (\mathbb{C}^n )^* \otimes \mathbb{C}^n, \qquad \mu_V \in V^* \otimes (\mathbb{C}^k)^* \otimes \mathbb{C}^k, \qquad \mu_{U  \circledast  V} \in (U \circledast V)^* \otimes (\mathbb{C}^{nk} )^* \otimes \mathbb{C}^{nk}.
\]
Suppose
\begin{equation}\label{eq:condrank}
\rank(\mu_U)=\dim U, \qquad \rank(\mu_V) =\dim V
\end{equation}
and
\begin{equation}\label{eq:condsurj}
\mu_U(U \otimes \mathbb{C}^n) =\mathbb{C}^n, \qquad \mu_V(V \otimes \mathbb{C}^k) = \mathbb{C}^k.
\end{equation}
Then
\[
\rank(\mu_{U  \circledast  V}) = \rank(\mu_U)\rank(\mu_V).
\]
\end{lemma}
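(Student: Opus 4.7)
The plan is to establish the equality by a matching pair of inequalities. For the upper bound, I will exhibit an explicit rank decomposition of $\mu_{U\circledast V}$ of size $\rank(\mu_U)\rank(\mu_V)$. For the lower bound, I will invoke Proposition~\ref{prop:rank lower bound} applied to the first slot of $\mu_{U\circledast V}$, using the hypothesis \eqref{eq:condrank} to convert the resulting dimension estimate into the desired product of ranks.

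For the upper bound, fix optimal decompositions
\[
\mu_U=\sum_{i=1}^{r_1}\phi_i\otimes f_i\otimes x_i,\qquad \mu_V=\sum_{j=1}^{r_2}\psi_j\otimes g_j\otimes y_j,
\]
with $r_1=\rank(\mu_U)$ and $r_2=\rank(\mu_V)$. Under the canonical identification $\mathbb{C}^{nk}\cong \mathbb{C}^n\otimes \mathbb{C}^k$ and the identification $U\circledast V\cong U\otimes V$ coming from the Kronecker product map, the matrix-vector action satisfies $(A\circledast B)(u\otimes v)=(Au)\otimes(Bv)$ on pure tensors. Therefore I would claim that
\[
\mu_{U\circledast V}=\sum_{i=1}^{r_1}\sum_{j=1}^{r_2}(\phi_i\otimes\psi_j)\otimes(f_i\otimes g_j)\otimes(x_i\otimes y_j),
\]
and verify this by evaluating both sides on an arbitrary $(A\circledast B)\otimes(u\otimes v)$, using the two given decompositions; by bilinearity the identity then extends to all of $(U\circledast V)\otimes\mathbb{C}^{nk}$. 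This yields $\rank(\mu_{U\circledast V})\le r_1 r_2$.

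For the lower bound, I will apply the variant of Proposition~\ref{prop:rank lower bound} in which the role of $W$ is replaced by the first factor $U\circledast V$. The hypothesis to check is that the span of $\{A\mapsto y^\mathsf{T} A x : x,y\in\mathbb{C}^{nk}\}$ equals $(U\circledast V)^*$; equivalently, that no nonzero $A\in U\circledast V$ is annihilated by every coordinate functional $A\mapsto e_j^\mathsf{T} A e_i$. This is immediate, since such functionals simply extract the entries of $A\in\mathbb{C}^{nk\times nk}$. Hence
\[
\rank(\mu_{U\circledast V})\ge \dim(U\circledast V)=\dim U\cdot\dim V,
\]
where the last equality uses that Kronecker products of linearly independent matrices remain linearly independent, so $\dim(U\circledast V)=\dim U\cdot\dim V$. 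By hypothesis \eqref{eq:condrank}, $\dim U=\rank(\mu_U)$ and $\dim V=\rank(\mu_V)$, and combining the two inequalities gives the claim.

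I do not anticipate a serious obstacle: the upper bound is a bookkeeping verification once one writes out the Kronecker action on tensors, and the lower bound is a direct application of Proposition~\ref{prop:rank lower bound}. The only subtlety worth commenting on is that the surjectivity hypothesis \eqref{eq:condsurj}, which guarantees $\dim U\ge n$ and $\dim V\ge k$, is what ensures that the naive lower bound $\dim U\cdot\dim V$ dominates the competing bound $nk$ coming from the other two slots of $\mu_{U\circledast V}$, so that the $\dim(U\circledast V)$ inequality is the sharp one to use.
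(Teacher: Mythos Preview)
Your argument is correct and matches the paper's own proof: an upper bound via the tensor product of optimal decompositions of $\mu_U$ and $\mu_V$, and a lower bound via Proposition~\ref{prop:rank lower bound} applied to the $(U\circledast V)^*$ slot, then invoking \eqref{eq:condrank} to identify $\dim(U\circledast V)=\dim U\cdot\dim V$ with $\rank(\mu_U)\rank(\mu_V)$. The paper's write-up is terse (``it is clear'' for the upper bound, and a one-line appeal to Proposition~\ref{prop:rank lower bound} for the lower bound), so your version simply makes explicit the decomposition identity $\mu_{U\circledast V}=\mu_U\otimes\mu_V$ under the Kronecker identifications and the verification that the coordinate functionals span $(U\circledast V)^*$; your closing observation about the role of \eqref{eq:condsurj} is a remark the paper does not spell out.
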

\begin{proof}
It is clear that $\rank(\mu_{U  \circledast  V})$ is bounded above by $ \rank(\mu_U)\rank(\mu_V)$. So it suffices to show that $\rank(\mu_{U  \circledast  V})$ is bounded below by $ \rank(\mu_U)\rank(\mu_V)$ but this  follows from applying Proposition~\ref{prop:rank lower bound} to the matrix-vector product
\[
(U  \circledast  V) \otimes \mathbb{C}^{nk} \to \mathbb{C}^{nk}. \qedhere
\]
\end{proof}

The desired result for \textsc{bttb} matrices follows immediately from Theorem~\ref{thm:toeplitz rank} and Lemma~\ref{lem:block matrix}.
\begin{corollary}
The rank of the structure tensor of the matrix-vector product  $\beta_{\bttb} : \bttb_{n,k}(\mathbb{C})  \times \mathbb{C}^{nk} \to \mathbb{C}^{nk} $ is $(2k-1)(2n-1)$.
\end{corollary}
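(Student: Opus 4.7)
The plan is to apply Lemma~\ref{lem:block matrix} with $U=\Toep_n(\mathbb{C})$ and $V=\Toep_k(\mathbb{C})$, whose Kronecker product is precisely the space $\bttb_{n,k}(\mathbb{C})$ of block-Toeplitz matrices with Toeplitz blocks. The conclusion of the lemma then reads
\[
\rank(\mu_{\bttb}) \;=\; \rank(\mu_{\Toep_n})\cdot \rank(\mu_{\Toep_k}),
\]
and Theorem~\ref{thm:toeplitz rank} evaluates each factor on the right as $2n-1$ and $2k-1$ respectively, giving the claimed $(2n-1)(2k-1)$.

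To execute the plan I first need to verify the two hypotheses of Lemma~\ref{lem:block matrix}. The rank condition \eqref{eq:condrank}, namely $\rank(\mu_U)=\dim U$ and $\rank(\mu_V)=\dim V$, follows directly from Theorem~\ref{thm:toeplitz rank} together with the fact that $\dim \Toep_n(\mathbb{C})=2n-1$ and $\dim \Toep_k(\mathbb{C})=2k-1$. The surjectivity condition \eqref{eq:condsurj} is also easy: the identity matrix $I_n$ lies in $\Toep_n(\mathbb{C})$, so for any $v\in \mathbb{C}^n$ we have $\mu_{\Toep_n}(I_n\otimes v)=I_n v=v$, hence $\mu_{\Toep_n}(\Toep_n(\mathbb{C})\otimes \mathbb{C}^n)=\mathbb{C}^n$; the same argument applies to $\Toep_k(\mathbb{C})$.

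Once these hypotheses are checked, the corollary follows immediately from Lemma~\ref{lem:block matrix}. I do not anticipate any real obstacle here --- the corollary is essentially a bookkeeping consequence of the lemma, and the tensor-rank lower bound hidden in the proof of Lemma~\ref{lem:block matrix} (via Proposition~\ref{prop:rank lower bound}) is what makes the equality, rather than a mere upper bound, possible. The only mildly subtle point worth making explicit in the writeup is the identification $\Toep_n(\mathbb{C})\circledast \Toep_k(\mathbb{C}) = \bttb_{n,k}(\mathbb{C})$, which holds because the Kronecker product of an $n\times n$ Toeplitz matrix with a $k\times k$ Toeplitz matrix yields an $n\times n$ block-Toeplitz matrix whose blocks are scalar multiples of the same $k\times k$ Toeplitz matrix, and taking linear combinations produces arbitrary \textsc{bttb} matrices.
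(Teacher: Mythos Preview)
Your proposal is correct and takes essentially the same approach as the paper: invoke Lemma~\ref{lem:block matrix} for $U=\Toep_n(\mathbb{C})$, $V=\Toep_k(\mathbb{C})$ and plug in the ranks from Theorem~\ref{thm:toeplitz rank}. The paper's proof is a one-line appeal to these two results; your write-up simply spells out the hypothesis checks (\eqref{eq:condrank} via $\dim\Toep_m(\mathbb{C})=2m-1$, and \eqref{eq:condsurj} via $I\in\Toep_m(\mathbb{C})$), which is fine and arguably clearer.
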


We state a more general version of Lemma~\ref{lem:block matrix} that applies to multilevel block structures.
\begin{theorem}
For $j=1,\dots,s$, let $U_{k_j} \subseteq \mathbb{C}^{k_j \times k_j}$ be a linear subspace of Toeplitz, Hankel, $f$-circulant,  Toeplitz-plus-Hankel, symmetric, or sparse matrices (each $U_{k_j}$ may have a different structure). Let $\mu_j$ be the structure tensor of the matrix-vector product $U_{k_j} \times \mathbb{C}^{k_j} \to \mathbb{C}^{k_j} $, $j=1,\dots,s$, and let $\mu$ be that of  $(U_1  \circledast  \cdots  \circledast U_s) \times \mathbb{C}^{k_1 \cdots k_s} \to \mathbb{C}^{k_1 \cdots k_s}$. Then
\[
\rank(\mu) = \prod_{j=1}^s \rank(\mu_j).
\]
\end{theorem}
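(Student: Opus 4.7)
The plan is to proceed by induction on $s$, using Lemma~\ref{lem:block matrix} as the key tool at each step. The base case $s=1$ is tautological. For the inductive step, exploit the associativity of the Kronecker product to write
\[
U_1 \circledast U_2 \circledast \cdots \circledast U_s = U_1 \circledast V, \qquad V := U_2 \circledast \cdots \circledast U_s,
\]
and apply Lemma~\ref{lem:block matrix} to the pair $(U_1, V)$ (with $n = k_1$ and $k = k_2 \cdots k_s$). If the hypotheses \eqref{eq:condrank} and \eqref{eq:condsurj} hold for this pair, then
\[
\rank(\mu) = \rank(\mu_1) \cdot \rank(\mu_V) = \rank(\mu_1) \prod_{j=2}^s \rank(\mu_j) = \prod_{j=1}^s \rank(\mu_j),
\]
by the inductive hypothesis applied to $V$, and we are done.

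The bulk of the work is therefore to verify \eqref{eq:condrank} and \eqref{eq:condsurj} for both $U_1$ and $V$. For $U_1$, the rank-equals-dimension statement $\rank(\mu_1) = \dim U_1$ is precisely the content of the tensor rank results established earlier in the paper for each allowed structure type: Theorem~\ref{thm:toeplitz rank} for Toeplitz ($\dim = 2n-1$), Corollary~\ref{cor:rank of Hankel matrix-vector product} for Hankel, the $f$-circulant proposition in Section~\ref{sec:f-circ} ($\dim = n$), the Toeplitz-plus-Hankel result in Section~\ref{sec:t+h} ($\dim = 4n-3$), the symmetric result in Section~\ref{sec:symm} ($\dim = n(n+1)/2$), and Proposition~\ref{prop:sparse} for sparse ($\dim = \#\Omega$). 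Surjectivity of $\mu_1$ is immediate in each case, since one can always find a matrix in the given subspace whose image against a chosen vector realizes any prescribed element of $\mathbb{C}^{k_1}$ (for sparse matrices, one assumes without loss of generality that $\Omega$ meets every row, as rows of zeros can be discarded).

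For $V$, the rank equality $\rank(\mu_V) = \dim V$ follows by combining the inductive hypothesis $\rank(\mu_V) = \prod_{j=2}^s \rank(\mu_j)$ with the fact that each $\rank(\mu_j) = \dim U_j$ (same references as above) and the identity $\dim V = \prod_{j=2}^s \dim U_j$, which holds because the Kronecker product of linearly independent matrices is linearly independent. Surjectivity of $\mu_V$ follows from surjectivity of each $\mu_j$: given a pure tensor $y_2 \otimes \cdots \otimes y_s \in \mathbb{C}^{k_2 \cdots k_s}$, choose $X_j \in U_j$ and $v_j \in \mathbb{C}^{k_j}$ with $X_j v_j = y_j$; then $(X_2 \circledast \cdots \circledast X_s)(v_2 \otimes \cdots \otimes v_s) = y_2 \otimes \cdots \otimes y_s$, and such pure tensors span the target space.

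I do not anticipate a serious obstacle: the structural argument is a clean induction, and the case-by-case verifications are mild because each allowed class has already been shown to have $\rank = \dim$ in its respective section. The one point deserving care is confirming that $\dim(U_2 \circledast \cdots \circledast U_s) = \prod_{j=2}^s \dim U_j$; this is standard and uses only that the Kronecker product of linearly independent families of matrices is linearly independent, which is an elementary fact about tensor products of vector spaces.
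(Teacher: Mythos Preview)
Your proposal is correct and follows essentially the same approach as the paper: induct on $s$ and apply Lemma~\ref{lem:block matrix} at each step, after checking that each admissible structure satisfies \eqref{eq:condrank} and \eqref{eq:condsurj} via the results of the relevant sections. Your write-up is in fact more careful than the paper's, which simply asserts that ``the conditions \eqref{eq:condrank} and \eqref{eq:condsurj} are met for these structured matrices'' and then invokes the lemma inductively; your explicit verification that the Kronecker product $V = U_2 \circledast \cdots \circledast U_s$ inherits both conditions (rank via the inductive hypothesis and $\dim V = \prod_j \dim U_j$, surjectivity via pure tensors) fills a gap the paper leaves implicit.
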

\begin{proof}
By our discussions in the previous and later sections, the conditions \eqref{eq:condrank} and \eqref{eq:condsurj} are met for these structured matrices. The result follows by applying Lemma~\ref{lem:block matrix} inductively.
\end{proof}
$f$-circulant matrices include circulant and skew-circulant ones;  sparse matrices include banded and triangular ones. Note that we have excluded skew-symmetric matrices and triangular Toeplitz matrices since they do not satisfy \eqref{eq:condsurj}.

Next we will discuss a Cohn--Umans realization of the matrix-vector product for $U_1  \circledast  \cdots  \circledast U_s$.:
\begin{proposition}
If the algebra $\mathcal{A}_j$ realize of the bilinear map $\beta: U_{k_j} \times \mathbb{C}^{k_j} \to \mathbb{C}^{k_j}$ for $j=1,\dots,s$, then the tensor product $\mathcal{A}= \mathcal{A}_1 \otimes \cdots \otimes \mathcal{A}_s$ realizes the Kronecker product $U_1  \circledast  \cdots  \circledast U_s$.
\end{proposition}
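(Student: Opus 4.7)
The plan is to construct the embedding, projection, and verify commutativity by taking appropriate tensor products of the corresponding data for each factor. For each $j=1,\dots,s$, by hypothesis we have an injective linear map $j_j : U_{k_j} \otimes \mathbb{C}^{k_j} \to \mathcal{A}_j \otimes \mathcal{A}_j$ and a linear map $\proj_j : \mathcal{A}_j \to \mathbb{C}^{k_j}$ making the generalized Cohn--Umans diagram \eqref{eq:gentpp0} commute for $\beta_j$ and $m_{\mathcal{A}_j}$.

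First I would exploit the standard vector space identifications $U_1 \circledast \cdots \circledast U_s \cong U_1 \otimes \cdots \otimes U_s$ and $\mathbb{C}^{k_1 \cdots k_s} \cong \mathbb{C}^{k_1} \otimes \cdots \otimes \mathbb{C}^{k_s}$, under which the Kronecker-product matrix-vector product becomes the tensor product of the individual matrix-vector products, i.e., $(A_1 \circledast \cdots \circledast A_s)(v_1 \otimes \cdots \otimes v_s) = (A_1 v_1) \otimes \cdots \otimes (A_s v_s)$. Thus, if $\beta_{\circledast}$ denotes the matrix-vector product on $U_1 \circledast \cdots \circledast U_s$, then under these identifications $\beta_{\circledast} = \beta_1 \otimes \cdots \otimes \beta_s$ after composing with the obvious shuffle isomorphism
\[
\sigma : (U_1 \otimes \cdots \otimes U_s) \otimes (\mathbb{C}^{k_1} \otimes \cdots \otimes \mathbb{C}^{k_s}) \xrightarrow{\;\sim\;} (U_1 \otimes \mathbb{C}^{k_1}) \otimes \cdots \otimes (U_s \otimes \mathbb{C}^{k_s}).
\]

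Next I would define the embedding and projection for $\mathcal{A} = \mathcal{A}_1 \otimes \cdots \otimes \mathcal{A}_s$ as
\[
j \;\coloneqq\; \tau \circ (j_1 \otimes \cdots \otimes j_s) \circ \sigma, \qquad \proj \;\coloneqq\; \proj_1 \otimes \cdots \otimes \proj_s,
\]
where $\tau$ is the shuffle isomorphism $(\mathcal{A}_1 \otimes \mathcal{A}_1) \otimes \cdots \otimes (\mathcal{A}_s \otimes \mathcal{A}_s) \xrightarrow{\sim} \mathcal{A} \otimes \mathcal{A}$. Injectivity of $j$ follows because each $j_i$ is injective and the tensor product of injective linear maps between vector spaces over a field is injective, while $\sigma$ and $\tau$ are isomorphisms. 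The multiplication on $\mathcal{A}$ is $m_{\mathcal{A}} = (m_{\mathcal{A}_1} \otimes \cdots \otimes m_{\mathcal{A}_s}) \circ \tau^{-1}$, i.e., it is componentwise on simple tensors, so
\[
\proj \circ m_{\mathcal{A}} \circ j \;=\; (\proj_1 \circ m_{\mathcal{A}_1} \circ j_1) \otimes \cdots \otimes (\proj_s \circ m_{\mathcal{A}_s} \circ j_s) \circ \sigma \;=\; (\beta_1 \otimes \cdots \otimes \beta_s) \circ \sigma \;=\; \beta_{\circledast},
\]
by commutativity of each individual diagram. This gives exactly the commutative diagram \eqref{eq:gentpp0} for $\beta_{\circledast}$, proving that $\mathcal{A}$ realizes $U_1 \circledast \cdots \circledast U_s$.

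The only thing that requires care, and the main potential source of confusion, is the bookkeeping of the two shuffle isomorphisms $\sigma$ and $\tau$ that reorder tensor factors so that the componentwise structure of $m_{\mathcal{A}}$ matches the componentwise structure of $\beta_{\circledast}$; once these are set up correctly, the verification reduces to checking the identity on simple tensors, where it is immediate. No genuinely hard step is anticipated.
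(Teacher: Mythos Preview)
Your proof is correct and follows essentially the same approach as the paper's: both take tensor products of the given embeddings and projections and verify commutativity factor by factor. The paper's proof is terser---it treats only the case $s=2$ and then inducts, and it tacitly assumes the embeddings factor as $U\hookrightarrow\mathcal{A}$, $\mathbb{C}^n\hookrightarrow\mathcal{A}$ rather than working with the general $j_j:U_{k_j}\otimes\mathbb{C}^{k_j}\to\mathcal{A}_j\otimes\mathcal{A}_j$---whereas you handle all $s$ factors simultaneously and are more careful about the shuffle isomorphisms, but the underlying idea is identical.
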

\begin{proof}
It suffices to prove the statement for $\beta_{U  \circledast  V} : (U  \circledast  V) \times \mathbb{C}^{nk} \to \mathbb{C}^{nk}$ when $\beta_U : U \times \mathbb{C}^n \to \mathbb{C}^n$ and  $\beta_V : V \times \mathbb{C}^k \to \mathbb{C}^k$ are realized by $\mathcal{A}$ and $\mathcal{B}$ respectively. But this follows from routine arguments: The embeddings $U \hookrightarrow \mathcal{A}$ and $V \hookrightarrow \mathcal{B}$ induce an embedding of $U  \circledast  V \hookrightarrow \mathcal{A} \otimes \mathcal{B}$ and the projections of $\mathcal{A}$ onto $\mathbb{C}^n$ and $\mathcal{B}$ onto $\mathbb{C}^k$ induce a projection of $\mathcal{A} \otimes \mathcal{B}$ onto $\mathbb{C}^{kn}$. The more general statement then \change{follows} from induction.
\end{proof}
For example the matrix-vector product $\beta_{\bttb} : \bttb_{n,k}(\mathbb{C})  \times \mathbb{C}^{nk} \to \mathbb{C}^{nk} $ is realized by the algebra
\[
\mathcal{A} = \mathbb{C}[x,y]/(x^{2k}-1,\; y^{2n}-1),
\]
from which we may also deduce the rank of the structure tensor of $\beta_{\bttb}$.

\section{Symmetric matrices}\label{sec:symm}

We saw in Corollaries~\ref{cor:mat-vec} and \ref{cor:mat-vec2} that the usual way of performing matrix-vector product is already optimal for general matrices. A natural question is: What if we require the matrix to be symmetric?  This is a very common situation since many, if not most, linear systems and least squares problems that arise in practice involve symmetric coefficient matrices. Despite this, we are unaware of any previous study. We show here that the optimal bilinear complexity for symmetric matrix-vector product is \change{$n(n+1)/2$}. Surprisingly the solution involves Hankel matrices.

We begin with the observation that every symmetric matrix may be expressed as a sum of symmetric Hankel matrices bordered by zeros. A $2 \times 2$ symmetric matrix is already a Hankel matrix. The $3 \times 3$ and $4 \times 4$ cases are shown explicitly below. 
\[
\begin{bmatrix}
a & b\\
b & c
\end{bmatrix},\qquad
\begin{bmatrix}
a & b & c\\
b & d & e\\
c & e & f
\end{bmatrix}
=\begin{bmatrix}
a & b & c\\
b & c & e\\
c & e & f
\end{bmatrix}
+
\begin{bmatrix}
0 & 0 & 0\\
0 & d -c & 0\\
0 & 0 & 0
\end{bmatrix},
\]
\[
\begin{bmatrix}
a & b & c & d\\
b & e & f & g\\
c & f & h & i \\
d & g & i & j
\end{bmatrix}
=
\begin{bmatrix}
a & b & c & d\\
b & c & d & g\\
c & d & g & i \\
d & g & i & j
\end{bmatrix}
+
\begin{bmatrix}
0 & 0 & 0 & 0\\
0 & e - c & f -d & 0\\
0 & f  -d & e -c & 0 \\
0 & 0 & 0 & 0
\end{bmatrix}
+
\begin{bmatrix}
0 & 0 & 0 & 0\\
0 & 0 & 0 & 0\\
0 & 0 & h -g -e + c & 0 \\
0 & 0 & 0 & 0
\end{bmatrix}.
\]
The generalization of this observation to $n \times n$ symmetric matrices will be established in our proof below, and together with Corollary~\ref{cor:rank of Hankel matrix-vector product}, be used to deduce the optimal bilinear complexity of symmetric matrix-vector product. Let $\mathsf{S}^2(\mathbb{C}^n)$ be the space of all $n\times n$ symmetric matrices. Let
$\beta_s:\mathsf{S}^2(\mathbb{C}^n)\times \mathbb{C}^n\to \mathbb{C}^n$
be the bilinear map of symmetric matrix-vector product and $\mu_s \in \mathsf{S}^2(\mathbb{C}^n)^* \otimes (\mathbb{C}^n)^* \otimes \mathbb{C}^n$.
\begin{theorem}\label{thm:rank of symmetric matrix-vector product}
The optimal bilinear complexity of symmetric matrix-vector product is \change{$n(n+1)/2$}, i.e., $\rank(\mu_s) = n(n+1)/2$.
\end{theorem}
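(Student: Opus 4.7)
The strategy is to prove matching lower and upper bounds. The lower bound is an essentially immediate application of Proposition~\ref{prop:rank lower bound}, and the upper bound comes from an inductive decomposition of a symmetric matrix into a Hankel matrix on its ``border'' plus a smaller symmetric matrix on its ``interior''.

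For the lower bound, I would invoke Proposition~\ref{prop:rank lower bound} with the role of $W$ played by $\mathsf{S}^2(\mathbb{C}^n)$. Interpreting $\mu_s$ as the contraction map $\mathbb{C}^n \otimes (\mathbb{C}^n)^* \to \mathsf{S}^2(\mathbb{C}^n)^*$ given by $\mu_s(v \otimes w^*)(A) = w^\mathsf{T} A v$, specialization to $v = e_j$, $w = e_i$ for each pair $1 \le i \le j \le n$ yields the coordinate functional $A \mapsto a_{ij}$, and these $n(n+1)/2$ functionals span $\mathsf{S}^2(\mathbb{C}^n)^*$. Hence the hypothesis of the proposition is satisfied and $\rank(\mu_s) \ge \dim \mathsf{S}^2(\mathbb{C}^n) = n(n+1)/2$.

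For the upper bound, I would induct on $n$. The base cases $n = 1$ (one multiplication) and $n = 2$ (a symmetric $2 \times 2$ matrix is already Hankel, so three multiplications by Corollary~\ref{cor:rank of Hankel matrix-vector product}) are immediate. The inductive step rests on the observation that any $A \in \mathsf{S}^2(\mathbb{C}^n)$ decomposes uniquely as $A = H + B$, where $H \in \Hank_n(\mathbb{C})$ is the unique Hankel matrix whose first and last rows coincide with those of $A$, and $B := A - H$ is supported on the interior $(n-2) \times (n-2)$ block. Writing $H = [h_{i+j-2}]$, the first-row constraints $h_k = a_{1,k+1}$ ($k = 0,\dots,n-1$) and the last-row constraints $h_k = a_{n,k-n+2}$ ($k = n-1,\dots,2n-2$) overlap only at $k = n-1$, where the symmetry $a_{1n} = a_{n1}$ makes them compatible, so $H$ is well-defined. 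Every Hankel matrix is symmetric, so $H$ automatically matches $A$ on the first and last columns as well; consequently $B$ vanishes on the entire border and, being the difference of two symmetric matrices, corresponds to an element of $\mathsf{S}^2(\mathbb{C}^{n-2})$ on the interior block.

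Assembling the multiplication count: computing $Av = Hv + Bv$, the Hankel product $Hv$ costs $2n-1$ multiplications by Corollary~\ref{cor:rank of Hankel matrix-vector product}, and $Bv$ reduces (since only the inner $n-2$ coordinates of $v$ matter and only the inner $n-2$ coordinates of the output are nonzero) to an $(n-2)\times(n-2)$ symmetric matrix-vector product, which by induction costs at most $(n-2)(n-1)/2$ multiplications. Additions and linear rearrangements of the inputs are free in the bilinear complexity model, so the total is $(2n-1) + (n-2)(n-1)/2 = n(n+1)/2$, completing the induction. The main obstacle is bookkeeping --- verifying that $H$ is well-defined via the symmetry $a_{1n}=a_{n1}$ and that $B$ restricts to a genuine smaller symmetric matrix-vector product --- but these are routine; the recursion is constructive and, when unrolled, reproduces the $4 \times 4$ example displayed in the introduction.
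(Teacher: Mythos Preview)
Your proof is correct and follows essentially the same approach as the paper: the lower bound via Proposition~\ref{prop:rank lower bound} applied to the factor $\mathsf{S}^2(\mathbb{C}^n)^*$, and the upper bound by peeling off a Hankel matrix matching the first and last rows (equivalently, by symmetry, the first row and last column, which is how the paper phrases it) to reduce to an $(n-2)\times(n-2)$ symmetric problem. The only cosmetic difference is that you frame the upper bound as an induction with the recurrence $(2n-1)+(n-2)(n-1)/2=n(n+1)/2$, whereas the paper unrolls it and sums $\sum_{i=0}^{\lfloor n/2\rfloor}\bigl[2(n-2i)-1\bigr]$ directly.
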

\begin{proof}
By Proposition~\ref{prop:rank lower bound}, we see that $\rank(\mu_\beta)\ge \dim\mathsf{S}^2(\mathbb{C}^n)= n(n+1)/2$. On the other hand, for a given
\[
A=\begin{bmatrix}
a_{1,1} & a_{1,2} & \dots & a_{1,n-1} & a_{1,n}\\
a_{1,2} & a_{2,2} & \dots & a_{2,n-1} & a_{2,n}\\
\vdots & \vdots & \ddots & \vdots & \vdots\\
a_{1,n-1} & a_{2,n-1} & \dots & a_{n-1,n-1} & a_{n-1,n}\\
a_{1,n} & a_{2,n} & \dots & a_{n-1,n} & a_{n,n}  
\end{bmatrix} \in \mathsf{S}^2(\mathbb{C}^n)
\]
and a column vector $v=[v_1,\dots,v_n]^{\mathsf{T}} \in \mathbb{C}^n $, we claim that $(A,v) \mapsto Av$ can be computed as the sum of several Hankel matrix-vector products of decreasing sizes. 
Let
\[
H_1=\Hank(a_{1,1},\dots, a_{1,n},a_{2,n},\dots, a_{n,n}) \in \Hank_n(\mathbb{C}),
\]
notation as in \eqref{eq:hank}. Then $A-H_1$ is  a symmetric matrix of the form
\[
\begin{bmatrix}
0 & 0 & 0 \\
0 & A_2 & 0\\
0 & 0 & 0
\end{bmatrix} \in \mathsf{S}^2(\mathbb{C}^n),
\]
where $A_2 \in \mathsf{S}^{(n-2)}(\mathbb{C})$. Also we notice that
\[
(A-H_1)v = \begin{bmatrix}
0 \\ A_2v^{(2)}\\ 0
\end{bmatrix}
\]
where $v^{(2)}=[v_2,\dots, v_{n-1}]^{\mathsf{T}} \in \mathbb{C}^{n -2}$. Now we can repeat the above procedure and inductively we can prove our claim. Explicitly,
\[
Av=H_1v + \begin{bmatrix}
0 \\ H_2 v^{(2)} \\ 0
\end{bmatrix} + \begin{bmatrix}
0 \\ 0\\ H_3 v^{(3)} \\0 \\0
\end{bmatrix}+\cdots,
\]
where $H_i \in \Hank_{n-2i}(\mathbb{C})$ and $v^{(i)} \in \mathbb{C}^{n-2i}$, $i =1,\dots,\lfloor n/2 \rfloor$. $H_i$ and $v^{(i)}$ are linear in the entries of $A$ and $v$ respectively. By Corollary~\ref{cor:rank of Hankel matrix-vector product}, one can compute $H_i v^{(i)}$ in $2(n-2i)-1$ multiplications. Hence we obtain
\[
\rank(\mu_s)\le \sum_{i=0}^{\lfloor n/2 \rfloor} [2(n-2i)-1]=\frac{n(n+1)}{2}
\]
and therefore 
\[
\rank(\mu_s)=\frac{n(n+1)}{2}. \qedhere
\]
\end{proof}
One may also interpret the proof of Theorem~\ref{thm:rank of symmetric matrix-vector product} as an instance of the generalized Cohn--Umans method. We have an embedding of vector spaces
\begin{equation}\label{eq:symmhank}
\mathsf{S}^2(\mathbb{C}^n)^*\otimes (\mathbb{C}^n)^*\otimes \mathbb{C}^n\hookrightarrow \bigoplus_{i=0}^{\lfloor n/2 \rfloor} \Hank_{n-2i}(\mathbb{C})^*\otimes (\mathbb{C}^{n-2i})^*\otimes \mathbb{C}^{n-2i},
\end{equation}
and for each $i = 0, \dots, \lfloor n/2 \rfloor$, the bilinear map $\Hank_{n-2i}(\mathbb{C}) \times \mathbb{C}^{n-2i} \to \mathbb{C}^{n-2i}$, $(H_i, v^{(i)}) \mapsto H_i v^{(i)}$ is in turn realized by the algebra $\mathbb{C}[x]/(x^{2(n-2i)}-1)$. Note that the object on the right-hand side of \eqref{eq:symmhank} is not an algebra but only a vector space --- this is an application of the commutative diagram \eqref{eq:gentpp1}.

\section{Skew-symmetric matrices}\label{sec:skew-symmetric}

A departure from other sections in this article is that in this section we do not have the optimal bilinear complexity, only upper bounds. We first discuss the case of $3\times 3$ skew-symmetric matrix-vector product. Let
\[
A = \begin{bmatrix}
0 & a & b\\
-a & 0 & c\\
-b & -c & 0
\end{bmatrix} \in \mathsf{\Lambda}^2 (\mathbb{C}^3).
\]
Then the usual matrix-vector multiplication gives
\[
A\begin{bmatrix}
x\\
y\\
z 
\end{bmatrix}=\begin{bmatrix}
ay+bz\\
-ax+cz\\
-bx-cy
\end{bmatrix},
\] 
which costs six multiplications. So the rank of the structure tensor of the skew-symmetric matrix-vector product is at most six. We will rely on the following theorem \cite{Ottaviani,Strass} for the lower bound of the border rank (hence the rank) of a special $3$-tensor.
\begin{theorem}\label{thm:Ottaviani}
Let $T\in U\otimes V\otimes W$ where $\dim U =\dim V = \dim W =3$. Let $u_1,u_2,u_3$ be a basis of $U$. If we can write $T$ as
\[
T=u_1\otimes X_1 + u_2\otimes X_2 + u_3 \otimes X_3,
\]
with $X_1,X_2,X_3\in V\otimes W$ regarded\footnote{The result is however coordinate independent, i.e., it does not depend on our choice of the bases.} as $3\times 3$ matrices and if the following block matrix is nonsingular, 
\[
M_T =
\begin{bmatrix}
0 & X_3 & -X_2\\
-X_3 & 0 & X_1\\
X_2 & X_1 & 0
\end{bmatrix} \in \mathbb{C}^{9 \times 9},
\]
then $\brank(T)\ge 5$. The same result holds with $V$ or $W$ in the role of $U$.
\end{theorem}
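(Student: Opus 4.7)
The plan is to exploit the fact that the assignment $T \mapsto M_T$ is \emph{linear} in $T$, combined with a sharp rank bound for $M_T$ on rank-one tensors. The border rank statement then follows from the (upper) semicontinuity of matrix rank, i.e., the locus $\{A : \rank(A) \le k\}$ is Zariski closed, being cut out by the vanishing of all $(k+1) \times (k+1)$ minors.

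First I would verify linearity: after fixing bases of $V$ and $W$ so that the $X_i \in V \otimes W$ are honest $3 \times 3$ matrices, the assignment $T = u_1 \otimes X_1 + u_2 \otimes X_2 + u_3 \otimes X_3 \mapsto M_T$ is clearly linear, since each block of $M_T$ is one of $0, \pm X_i$.

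The crux of the argument is the rank-one estimate: for any rank-one tensor $T = u \otimes v \otimes w$, we have $\rank(M_T) \le 2$. Writing $u = \alpha_1 u_1 + \alpha_2 u_2 + \alpha_3 u_3$, we get $X_i = \alpha_i \, v w^{\mathsf{T}}$, and $M_T$ as a linear map $\mathbb{C}^9 \to \mathbb{C}^9$ factors as
\[
\mathbb{C}^3 \oplus \mathbb{C}^3 \oplus \mathbb{C}^3 \xrightarrow{\;\phi_w\;} \mathbb{C}^3 \xrightarrow{\;A(u)\;} \mathbb{C}^3 \xrightarrow{\;\psi_v\;} \mathbb{C}^3 \oplus \mathbb{C}^3 \oplus \mathbb{C}^3,
\]
where $\phi_w(x_1,x_2,x_3) = (\langle w, x_1\rangle, \langle w, x_2\rangle, \langle w, x_3\rangle)$, $\psi_v(y_1,y_2,y_3) = (y_1 v, y_2 v, y_3 v)$, and $A(u)$ is the $3 \times 3$ matrix whose entries are the scalar counterparts of the block entries of $M_T$ (the block structure collapses to scalars once all blocks are scalar multiples of the same rank-one matrix $vw^{\mathsf{T}}$). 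Since $A(u)$ is (up to a sign convention inherited from the definition of $M_T$) a skew-symmetric $3 \times 3$ matrix, and skew-symmetric matrices of odd size have even rank, we conclude $\rank A(u) \le 2$, hence $\rank(M_T) \le 2$.

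With this in hand, linearity gives $\rank(M_T) \le 2 \cdot \rank(T)$ for arbitrary $T$, because a rank-$r$ decomposition $T = \sum_{i=1}^r T_i$ with $T_i$ rank-one yields $M_T = \sum_{i=1}^r M_{T_i}$ and rank is subadditive. For border rank, if $T = \lim_{n\to\infty} T_n$ with $\rank(T_n) = r$, then $M_T = \lim_{n\to\infty} M_{T_n}$ by continuity of the linear map $T \mapsto M_T$, and semicontinuity of matrix rank yields $\rank(M_T) \le \liminf_n \rank(M_{T_n}) \le 2r$. Thus if $M_T$ is nonsingular, i.e., $\rank(M_T) = 9$, we must have $2r \ge 9$, and since $r \in \mathbb{Z}$ this forces $r \ge 5$, establishing $\brank(T) \ge 5$.

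The main obstacle will be the rank-one computation, in particular keeping track of the signs so that the reduced $3 \times 3$ matrix $A(u)$ is genuinely skew-symmetric (giving the bound $2$ rather than $3$); the difference between $2r$ and $3r$ is precisely what makes $\brank(T) \ge 5$ rather than the much weaker $\brank(T) \ge 3$. The coordinate independence asserted in the footnote should follow by observing that changing the basis of $U$ conjugates $A(u)$ by an invertible matrix and thus preserves its rank bound of $2$, while changes of basis in $V$ and $W$ act by invertible block-diagonal transformations on $M_T$ that preserve its rank.
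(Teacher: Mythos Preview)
The paper does not supply its own proof of this theorem; it is quoted from Ottaviani and Strassen and used as a black box. Your outline---linearity of $T\mapsto M_T$, a rank bound on rank-one tensors, subadditivity of matrix rank, and lower semicontinuity to pass to border rank---is exactly the standard argument behind Strassen's equations, so in spirit you have reproduced the intended proof.

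There is, however, a genuine gap at the step you yourself flag as ``the main obstacle.'' You assert that the reduced $3\times 3$ matrix $A(u)$ is skew-symmetric ``up to a sign convention,'' but you never check it, and with $M_T$ exactly as printed in the paper it is \emph{false}. The $(2,3)$ and $(3,2)$ blocks of $M_T$ are both $+X_1$, so for $T=u\otimes v\otimes w$ with $u=\alpha_1u_1+\alpha_2u_2+\alpha_3u_3$ your factorization yields
\[
A(u)=\begin{bmatrix}0&\alpha_3&-\alpha_2\\-\alpha_3&0&\alpha_1\\ \alpha_2&\alpha_1&0\end{bmatrix},\qquad \det A(u)=2\alpha_1\alpha_2\alpha_3,
\]
which is not identically zero. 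Hence for generic rank-one $T$ you get only $\rank M_T\le 3$, and your inequality collapses to $\brank(T)\ge 9/3=3$, not $5$. The phrase ``up to a sign convention'' hides precisely the content of the lemma.

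This is almost certainly a typographical slip in the paper: in Strassen's and Ottaviani's formulation the $(3,2)$ block is $-X_1$, making the block pattern (and hence $A(u)$) genuinely skew-symmetric, so that $\rank A(u)\le 2$ and your argument goes through verbatim to give $\brank(T)\ge\lceil 9/2\rceil=5$. Your proof is correct for that corrected statement; but as a proof of the theorem as literally printed, the crucial rank-one estimate is unproved and in fact unprovable.
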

Let $\mathsf{\Lambda}^2(\mathbb{C}^n)$ be the space of all $n\times n$ skew-symmetric matrices.
Note that $\dim \mathsf{\Lambda}^2 (\mathbb{C}^3)= 3$ and we may apply Theorem~\ref{thm:Ottaviani} to $T  = \mu_\mathsf{\Lambda}$, the structure tensor of the bilinear map
\[
\beta_\wedge : \mathsf{\Lambda}^2 (\mathbb{C}^3)\times \mathbb{C}^3\to \mathbb{C}^3, \quad \left(  \begin{bmatrix}
0 & a & b\\
-a & 0 & c\\
-b & -c & 0
\end{bmatrix},\begin{bmatrix} x\\ y\\ z \end{bmatrix}\right)\mapsto \begin{bmatrix} ay+bz\\-ax+cz\\ -bx-cy\end{bmatrix}.
\]
Let $e_1, e_2, e_3$ be the standard basis of $\mathbb{C}^3$ and
\[
F_1 = \begin{bmatrix}
0 & 1 & 0\\
-1 & 0 & 0\\
0 & 0 & 0
\end{bmatrix}, \quad
F_2 = \begin{bmatrix}
0 & 0 & 1\\
0 & 0 & 0\\
-1 & 0 & 0
\end{bmatrix}, \quad
F_3 = \begin{bmatrix}
0 & 0 & 0\\
0 & 0 & 1\\
0 & -1 & 0
\end{bmatrix}.
\]
be a basis of $ \mathsf{\Lambda}^2 (\mathbb{C}^3)$. Then we may decompose $\mu_\mathsf{\Lambda}$ as 
\[
\mu_\mathsf{\Lambda} =F_1\otimes (e_2\otimes e_1-e_1\otimes e_2)+ F_2\otimes (e_3\otimes e_1- e_1\otimes e_3) + F_3 \otimes (e_3\otimes e_2-e_2\otimes e_3)
\]
and it is easy to verify that $M_{\mu_\mathsf{\Lambda}} $ is nonsingular, giving us the following.
\begin{proposition}\label{prop:skew3}
The rank and border rank of the structure tensor of skew-matrix-vector product for $3 \times 3$ matrices are given by
\[
\rank(\mu_\mathsf{\Lambda})=5 \quad \text{or} \quad 6,
\]
and
\[
\brank(\mu_\mathsf{\Lambda})=5\quad \text{or} \quad 6.
\]
\end{proposition}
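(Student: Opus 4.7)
The upper bound $\rank(\mu_\mathsf{\Lambda}) \le 6$ is immediate from the formula displayed just before the proposition: the six scalar products $ay$, $bz$, $ax$, $cz$, $bx$, $cy$ suffice to assemble $A[x,y,z]^\mathsf{T}$, so the standard matrix-vector algorithm already gives a $6$-term decomposition of $\mu_\mathsf{\Lambda}$. Since $\brank(\mu_\mathsf{\Lambda}) \le \rank(\mu_\mathsf{\Lambda})$, this also yields $\brank(\mu_\mathsf{\Lambda}) \le 6$.

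The work therefore lies in the lower bound $\brank(\mu_\mathsf{\Lambda}) \ge 5$, from which $\rank(\mu_\mathsf{\Lambda}) \ge \brank(\mu_\mathsf{\Lambda}) \ge 5$ follows automatically. For this I would invoke Theorem~\ref{thm:Ottaviani}, applied with $U = \mathsf{\Lambda}^2(\mathbb{C}^3)^*$ (of dimension $3$) and $V = (\mathbb{C}^3)^*$, $W = \mathbb{C}^3$. Using the basis $F_1, F_2, F_3$ of $\mathsf{\Lambda}^2(\mathbb{C}^3)$ displayed in the excerpt, one has the slice decomposition
\[
\mu_\mathsf{\Lambda} = F_1^* \otimes X_1 + F_2^* \otimes X_2 + F_3^* \otimes X_3,
\]
where, written as $3 \times 3$ matrices in the standard basis,
\[
X_1 = \begin{bmatrix} 0 & -1 & 0 \\ 1 & 0 & 0 \\ 0 & 0 & 0 \end{bmatrix}, \quad
X_2 = \begin{bmatrix} 0 & 0 & -1 \\ 0 & 0 & 0 \\ 1 & 0 & 0 \end{bmatrix}, \quad
X_3 = \begin{bmatrix} 0 & 0 & 0 \\ 0 & 0 & -1 \\ 0 & 1 & 0 \end{bmatrix}.
\]
I would read off each $X_k$ directly from the bilinear map $\beta_\wedge$: the pair $(F_k^*, X_k)$ is determined by how $F_k \in \mathsf{\Lambda}^2(\mathbb{C}^3)$ acts on $\mathbb{C}^3$.

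The key step is then to check that the $9 \times 9$ block matrix
\[
M_{\mu_\mathsf{\Lambda}} = \begin{bmatrix} 0 & X_3 & -X_2 \\ -X_3 & 0 & X_1 \\ X_2 & X_1 & 0 \end{bmatrix}
\]
is nonsingular. This is a concrete (if slightly tedious) determinant computation with a sparse integer matrix; one can simplify it by permuting rows and columns to expose a block-diagonal or block-triangular structure, or by row reduction exploiting the many zero entries. Once $\det M_{\mu_\mathsf{\Lambda}} \ne 0$ is verified, Theorem~\ref{thm:Ottaviani} yields $\brank(\mu_\mathsf{\Lambda}) \ge 5$, completing the proof.

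The main (and only real) obstacle is the nonsingularity check for $M_{\mu_\mathsf{\Lambda}}$: the theorem does the heavy lifting, but one must actually see that this particular $9 \times 9$ matrix has nonzero determinant. Since the entries are all in $\{-1, 0, 1\}$ and the matrix is extremely sparse, I expect the computation to be short once an appropriate permutation of rows/columns is chosen. No additional ingredients beyond Theorem~\ref{thm:Ottaviani} and the standard $6$-multiplication algorithm are required.
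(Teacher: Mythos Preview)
Your proposal is correct and follows essentially the same route as the paper: the upper bound of $6$ comes from the standard six-multiplication formula, and the lower bound $\brank(\mu_\mathsf{\Lambda}) \ge 5$ is obtained by writing out the three slices $X_1, X_2, X_3$ (which coincide with the paper's $e_2\otimes e_1 - e_1\otimes e_2$, etc.) and applying Theorem~\ref{thm:Ottaviani} after checking that the associated $9 \times 9$ block matrix is nonsingular. The paper simply asserts that ``it is easy to verify that $M_{\mu_\mathsf{\Lambda}}$ is nonsingular,'' so your slightly more explicit treatment of the determinant step is entirely in the same spirit.
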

Next we construct an algebra that realizes the $3\times 3$ skew-symmetric matrix-vector product. Our candidate is
\[
\mathcal{A}=\mathbb{C}\left\langle x_1,x_2\right\rangle/(x_1^2,\; x_2^2, \; x_1x_2+x_2x_1).
\]
The embedding $\mathsf{\Lambda}^2 (\mathbb{C}^3)\times \mathbb{C}^3 \hookrightarrow \mathcal{A} \times \mathcal{A}$ is given by
\[
a_1\mapsto -x_1, \quad
a_2\mapsto -x_2, \quad
a_3\mapsto 1, \quad
b_1\mapsto 1, \quad
b_2\mapsto -x_2, \quad
b_3\mapsto x_1.
\]
Then given $A=\sum_{i=1}^3 u_i a_i\in \mathsf{\Lambda}^2 (\mathbb{C}^3)$ and $x=\sum_{i=1}^3 v_i b_i\in \mathbb{C}^3$, their images $\widehat{A},\widehat{x}\in\mathcal{A}$  are given by
\[
\widehat{A}=-u_1 x_1 - u_2 x_2 +u_3, \qquad
\widehat{x}= v_1 - v_2 x_2 + v_3 x_1,
\]
and their product is given by
\[
\widehat{A}  \cdot \widehat{x}=(-u_1 v_1 + u_3 v_3)\cdot x_1 + (u_1v_2+u_2v_3)\cdot x_1x_2 + (-u_2v_1-u_3v_2)\cdot x_2 + (u_3v_1) 1\in \mathcal{A}.
\]
Hence $\mathcal{A}$ realizes $3\times 3$ skew-symmetric matrix-vector product. We observe that $\mathcal{A}$ may be regarded as the cohomology ring of a torus, i.e., an exterior algebra of a two-dimensional vector space.

We now discuss the general case of $n\times n$ skew-symmetric matrix-vector product $\beta_\wedge : \mathsf{\Lambda}^2 (\mathbb{C}^n)\times \mathbb{C}^n\to \mathbb{C}^n$. We construct an algebra that realizes $\beta_\wedge$ starting with the inclusion of vector spaces
\begin{equation}\label{eq:wedge}
\mathsf{\Lambda}^2(\mathbb{C}^n)\hookrightarrow \bigl(\mathbb{C}[x]/(x^n+1)\bigr)\oplus W,
\end{equation}
where $W$ is a linear subspace of $\mathbb{C}^{n\times n}$ matrices satisfying the following conditions
\begin{enumerate}[\upshape (i)]
\item entries in the first row are all zeros;
\item diagonal entries are all zeros;
\item entries in the first column satisfy the relation $a_{i,1}+a_{n+2-i,1}=0$ for $i=2,\dots,n$.
\end{enumerate} 
Given $A \in 
\mathsf{\Lambda}^2(\mathbb{C}^n)$, the embedding is given by the decomposition
\[
A = \begin{bmatrix}
0 & a_{1,2} & \cdots & a_{1,n-1} & a_{1,n} \\
-a_{1,2} & 0  & \cdots &  a_{2,n-1} & a_{2,n}\\
\vdots & \vdots  & \ddots &\vdots & \vdots\\
-a_{1,n-1} & -a_{2,n-1}  & \cdots & 0 & a_{n-1,n}\\
-a_{1,n}  & -a_{2,n}  & \cdots & -a_{n-1,n} & 0
\end{bmatrix}= A_c +A_w, 
\]
where
\[
A_c=\begin{bmatrix}
0 & a_{1,2} & \cdots & a_{1,n-1} & a_{1,n} \\
-a_{1,n} & 0  & \cdots &  a_{1,n-2} & a_{1,n-1}\\
\vdots & \vdots  & \ddots &\vdots & \vdots\\
-a_{1,3} & -a_{1,4}  & \cdots & 0 & a_{1,2}\\
-a_{1,2}  & -a_{1,3}  & \cdots & -a_{1,n} & 0
\end{bmatrix} \in \Circ_{n, -1}(\mathbb{C})
\]
is a skew-circulant matrix and
\[
A_w =\begin{bmatrix}
0 & 0 & \cdots & 0 & 0 \\
-a_{1,2}+a_{1,n} & 0  & \cdots &  a_{2,n-1}-a_{1,n-2} & a_{2,n}-a_{1,n-1}\\
\vdots & \vdots  & \ddots &\vdots & \vdots\\
a_{1,3}-a_{1,n-1} & a_{1,4}-a_{2,n-1}  & \cdots & 0 & a_{n-1,n}-a_{1,2}\\
a_{1,2}-a_{1,n}  & a_{1,3}-a_{2,n}  & \cdots & a_{1,n}-a_{n-1,n} & 0
\end{bmatrix}\in W.
\]
Note that $A_c$, being skew-circulant, may be regarded as an element of $\mathbb{C}[x]/(x^n+1)$ as we have discussed in Section~\ref{sec:f-circ} and  we obtain the embedding in \eqref{eq:wedge}.

Since we also have
\[
\mathbb{C}^n \simeq \mathbb{C}[x]/(x^n+1),
\]
the bilinear map $\beta_\wedge : \mathsf{\Lambda}^2 (\mathbb{C}^n)\times \mathbb{C}^n\to \mathbb{C}^n$ may be realized as follows.
\[
\begin{tikzcd}
\mathsf{\Lambda}^2(\mathbb{C}^n) \otimes \mathbb{C}^n \arrow{r}{j} \arrow[swap]{d}{\id} &  \bigl(\mathbb{C}[x]/(x^n+1) \oplus  W\bigr) \otimes \mathbb{C}^n \arrow{d}{\simeq}\\
\mathsf{\Lambda}^2(\mathbb{C}^n) \otimes \mathbb{C}^n \arrow{r}{j} \arrow[swap]{d}{\id} &  \bigl(\mathbb{C}[x]/(x^n+1) \otimes \mathbb{C}^n \bigr)  \oplus (W \otimes \mathbb{C}^n)   \arrow{d}{\simeq}\\
\mathsf{\Lambda}^2(\mathbb{C}^n) \otimes \mathbb{C}^n \arrow{r}{j} \arrow[swap]{d}{\beta_\wedge} &  \bigl(\mathbb{C}[x]/(x^n+1) \otimes \mathbb{C}[x]/(x^n+1)\bigr) \oplus (W \otimes \mathbb{C}^n)  \arrow{d}{m}\\
\mathbb{C}^n   &\arrow{l}{\proj} \mathbb{C}[x]/(x^n+1)\simeq \mathbb{C}^n
\end{tikzcd}
\]
We have identified the skew-circulant matrix vector product with the skew-circulant matrix-matrix product (see Section~\ref{sec:f-circ}), i.e., the multiplication 
\[
\mathbb{C}[x]/(x^n+1) \times \mathbb{C}^n\to \mathbb{C}^n
\]
is identified with the multiplication
\[
\mathbb{C}[x]/(x^n+1) \times \mathbb{C}[x]/(x^n+1) \to \mathbb{C}[x]/(x^n+1).
\]
This realization is another instance of the commutative diagram \eqref{eq:gentpp1}. To put all these in concrete terms, given $A \in \mathsf{\Lambda}^2(\mathbb{C}^n) $ and $x \in \mathbb{C}^n$, we compute the matrix-vector product via
\[
Ax = \bigl(\text{first row of } A_c \Circ(x)\bigr) + A_w x.
\]
\begin{theorem}
The rank of the structure tensor of skew-symmetric matrix-vector product is bounded above by $n^2-n  - \lceil (n-1)/2 \rceil +1$. 
\end{theorem}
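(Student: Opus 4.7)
The plan is to combine the decomposition $A = A_c + A_w$ just constructed (so that $Ax = A_cx + A_wx$) with the $f$-circulant algorithm from Section~\ref{sec:f-circ} for the skew-circulant term and a careful direct count for the residual term.

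First I would handle $A_c x$. Since $A_c \in \Circ_{n,-1}(\mathbb{C}) \simeq \mathbb{C}[x]/(x^n+1)$ and we work over $\mathbb{C}$, the Wedderburn decomposition $\mathbb{C}[x]/(x^n+1) \simeq \bigoplus_{j=0}^{n-1}\mathbb{C}$ identifies multiplication with a componentwise product in $n$ copies of $\mathbb{C}$. Exactly as in the skew-circulant case treated in Section~\ref{sec:f-circ}, this yields $A_c x$ using $n$ multiplications, which is also tight by Proposition~\ref{prop:rank lower bound}.

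Next I would count multiplications for $A_w x$ directly from the three defining conditions of $W$. The entries of $A_w$ in rows $2,\dots,n$ and columns $2,\dots,n$ off the diagonal are completely free, contributing $(n-1)(n-2)$ independent multiplications of the form $(A_w)_{i,j}\cdot x_j$. In the first column, the relation $(A_w)_{i,1} + (A_w)_{n+2-i,1} = 0$ pairs indices antipodally: a single multiplication $(A_w)_{i,1}\cdot x_1$ serves row $i$ and, negated, serves row $n+2-i$ of $A_w x$; when $n$ is even the fixed index $i = n/2+1$ is forced to zero and costs nothing. Thus the first column contributes $\lfloor (n-1)/2\rfloor$ multiplications, and in total
\[
A_w x \text{ uses at most } (n-1)(n-2) + \lfloor (n-1)/2 \rfloor = (n-1)^2 - \lceil (n-1)/2 \rceil
\]
multiplications.

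Summing the two bounds gives $\rank(\mu_\wedge) \le n + (n-1)^2 - \lceil (n-1)/2 \rceil = n^2 - n - \lceil (n-1)/2 \rceil + 1$, which is the claim. The only delicate step is the multiplication accounting for $A_w x$: I need to verify that the antipodal pairing on the first column is the \emph{only} sharing that occurs (so the off-first-column multiplications really are independent) and that the parity cases $n$ even versus $n$ odd are captured uniformly by the ceiling; both are straightforward from the explicit description of $W$ but deserve a careful check in the write-up.
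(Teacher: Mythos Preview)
Your proposal is correct and follows essentially the same approach as the paper: split $Ax = A_cx + A_wx$, use the rank-$n$ algorithm for the skew-circulant part (the paper invokes Theorem~\ref{thm:Winograd} for $\mathbb{C}[x]/(x^n+1)$ rather than Wedderburn, but both give $n$), and count directly for $A_wx$. The only cosmetic difference is bookkeeping: the paper phrases the $A_w$ count as ``$n^2-(2n-1)$ nonzero entries (by (i), (ii) and Proposition~\ref{prop:sparse}) minus $\lceil (n-1)/2\rceil$ identical terms (by (iii)),'' whereas you split off the first column and count $\lfloor (n-1)/2\rfloor$ multiplications there plus $(n-1)(n-2)$ elsewhere; since $\lfloor (n-1)/2\rfloor + \lceil (n-1)/2\rceil = n-1$, the two tallies agree.
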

\begin{proof}
The first factor of the realization is the multiplication in the algebra
\[
\mathbb{C}[x]/(x^n+1) \times \mathbb{C}[x]/(x^n+1)\to \mathbb{C}[x]/(x^n+1).
\]
By  Theorem~\ref{thm:Winograd} we see that the rank of the structure tensor of the algebra $\mathbb{C}[x]/(x^n+1)$ is $n$. The second factor of the realization is a bilinear map
\[
W\times \mathbb{C}^{n} \to \mathbb{C}^{n}.
\]
A matrix-vector product  with a matrix in $W$ costs $n^2-(2n-1)-\lceil (n-1)/2 \rceil$ multiplications --- there are $2n-1$ zeros by (i) and (ii) and we invoke Proposition~\ref{prop:sparse};  moreover there are $ \lceil (n-1)/2 \rceil$ identical terms by (iii). Therefore this realization gives an upper bound  of
\[
n^2-n  - \lceil (n-1)/2 \rceil + 1. \qedhere
\]
\end{proof}
This upper bound is $n+\lceil (n-1)/2 \rceil - 1$ multiplications fewer than the usual matrix-vector product. In particular, for $n=3$ we obtain the upper bound in Proposition~\ref{prop:skew3}.

\section{Commutator}\label{sec:comm}

Our study of the bilinear complexity of commutators in this section  covers only the case of $2\times 2$ matrices. We do not yet know how to extend them to $n \times n$ matrices when $n > 2$.

We consider the bilinear map $[\cdot, \cdot]:\mathbb{C}^{2 \times 2}\times \mathbb{C}^{2 \times 2} \to \mathbb{C}^{2 \times 2}$ defined by $[A,X]=AX-XA$. We will write
\[
A=\begin{bmatrix}
a & b \\
c & d
\end{bmatrix}, \qquad
X=\begin{bmatrix}
x & y \\
z & w
\end{bmatrix},
\]
and therefore
\[
[A,X]=\begin{bmatrix}
bz-cy & (a-d)y-b(x-w)\\
-(a-d)z+c(x-w) & -(bz-cy)
\end{bmatrix}.
\]
Hence the rank of $\mu_{[\cdot,\cdot]} \in (\mathbb{C}^{2 \times 2})^*\otimes (\mathbb{C}^{2 \times 2})^* \otimes \mathbb{C}^{2 \times 2}$, the structure tensor of $[\cdot, \cdot ]$, is at most six. 

Now consider the matrix-vector product between matrices and vectors of the following forms
\[
\begin{bmatrix}
0 & -c & b\\
-b & a-d & 0\\
c & 0 & -a-d
\end{bmatrix} \in \mathbb{C}^{3 \times 3}, \qquad
\begin{bmatrix}
x-w\\
y\\
z
\end{bmatrix} \in \mathbb{C}^3.
\]
Notice that 
\[
\begin{bmatrix}
0 & -c & b\\
-b & a-d & 0\\
c & 0 & -(a-d)
\end{bmatrix}
\begin{bmatrix}
x-w\\
y\\
z
\end{bmatrix}=
\begin{bmatrix}
bz-cy\\
(a-d)y-b(x-w)\\
-(a-d)z+c(x-w)
\end{bmatrix},
\]
So the rank and border rank of the structure tensor $\mu_{[\cdot,\cdot]}$ of $[\cdot, \cdot]$ is the same as the rank of the following bilinear operation
\begin{equation}\label{eq:beta}
\beta: \mathbb{C}^3\times \mathbb{C}^3 \to \mathbb{C}^3, \quad
\left(
\begin{bmatrix}
s_1\\
s_2\\
s_3
\end{bmatrix},
\begin{bmatrix}
t_1\\
t_2\\
t_3
\end{bmatrix}
\right)\mapsto
\begin{bmatrix*}[r]
s_1t_2+s_2t_3\\
-s_2t_1+s_3t_2\\
-s_1t_1-s_3t_3
\end{bmatrix*},
\end{equation}
where
\[
s_1  = -c, \quad  s_2  = b, \quad s_3  = a-d, \quad
t_1  = x-w, \quad t_2 = y, \quad t_3  = z.
\]
We will need to distinguish the three copies of $\mathbb{C}^3$ in \eqref{eq:beta}, so for clarity let us denote them by $U$, $V$, and $W$ respectively, i.e.,
\[
\beta : U \times V \to W.
\]
Let $\{u_1, u_2, u_3 \}$, $\{v_1,v_2, v_3\}$, $\{w_1,w_2,w_3\}$ be the standard bases of $U,V,W$. Then  the structure tensor $\mu_\beta$ of $\beta$ may be decomposed as
\[
\mu_\beta=(u_1\otimes v_2+u_2\otimes v_3)\otimes w_1+ (-u_2\otimes v_1+u_3\otimes v_2)\otimes w_2 + (-u_1\otimes v_1-u_3\otimes v_3)\otimes w_3
\]
and we may apply Theorem~\ref{thm:Ottaviani} to obtain the following.
\begin{corollary}\label{cor: brank/rank of commutator}
The rank and border rank of the commutator for $2 \times 2$ matrices are given by
\[
\rank\bigl(\mu_{[\cdot,\cdot]}\bigr)=5 \quad \text{or} \quad 6
\]
and
\[
\brank\bigl(\mu_{[\cdot,\cdot]}\bigr)=5
\]
respectively.
\end{corollary}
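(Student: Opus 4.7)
The plan is to exploit the reduction already carried out in the text preceding the corollary: $\rank(\mu_{[\cdot,\cdot]}) = \rank(\mu_\beta)$ and $\brank(\mu_{[\cdot,\cdot]}) = \brank(\mu_\beta)$, where $\mu_\beta$ is the structure tensor of the $3\times 3\times 3$ bilinear map $\beta$ defined in \eqref{eq:beta}. It therefore suffices to determine these invariants of $\mu_\beta$.

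The upper bound $\rank(\mu_{[\cdot,\cdot]}) \leq 6$ is immediate from the explicit formula for $[A,X]$ displayed above: every entry is a $\mathbb{C}$-linear combination of the six scalar products $bz$, $cy$, $(a-d)y$, $b(x-w)$, $(a-d)z$, $c(x-w)$ between linear forms in $A$ and linear forms in $X$.

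For the lower bound $\brank(\mu_\beta) \geq 5$, I would apply Theorem~\ref{thm:Ottaviani} to the explicit decomposition of $\mu_\beta$ displayed just before the corollary statement. Reading off the coefficient of each $u_i$ in that decomposition produces
\[
X_1 = v_2 \otimes w_1 - v_1 \otimes w_3, \quad X_2 = v_3 \otimes w_1 - v_1 \otimes w_2, \quad X_3 = v_2 \otimes w_2 - v_3 \otimes w_3,
\]
and the hypothesis of the theorem reduces to exhibiting the associated $9 \times 9$ block matrix $M_{\mu_\beta}$ and verifying it is nonsingular, a routine (if slightly tedious) determinant calculation. This yields $\brank(\mu_\beta) \geq 5$, hence also $\rank(\mu_\beta) \geq 5$, and so $\rank(\mu_{[\cdot,\cdot]}) \in \{5,6\}$.

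The remaining step, which I expect to be the main obstacle, is the border-rank upper bound $\brank(\mu_\beta) \leq 5$. Since Ottaviani's criterion is one-sided, it must be obtained by an explicit construction: one exhibits a sequence of rank-$5$ tensors converging to $\mu_\beta$. The natural strategy is an $\varepsilon$-perturbation of the form
\[
\mu_\beta \;=\; \lim_{\varepsilon \to 0}\; \varepsilon^{-1}\sum_{i=1}^{5} a_i(\varepsilon) \otimes b_i(\varepsilon) \otimes c_i(\varepsilon),
\]
with each $a_i(\varepsilon), b_i(\varepsilon), c_i(\varepsilon)$ affine in $\varepsilon$, arranged so that the divergent $\varepsilon^{-1}$ contributions cancel in the limit and the $\varepsilon^{0}$ term recovers $\mu_\beta$. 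Combining this upper bound with the Ottaviani lower bound gives $\brank(\mu_{[\cdot,\cdot]}) = 5$. We do not attempt to pin $\rank(\mu_{[\cdot,\cdot]})$ down to a single value: distinguishing $\rank = 5$ from $\rank = 6$ would require either producing a genuine (non-border) rank-$5$ decomposition over $\mathbb{C}$ or improving the lower bound to $6$, neither of which follows from the techniques above.
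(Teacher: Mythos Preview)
Your argument for the rank claim and for the border-rank lower bound matches the paper's exactly: reduce to $\mu_\beta \in \mathbb{C}^3 \otimes \mathbb{C}^3 \otimes \mathbb{C}^3$, read off the six-multiplication upper bound from the explicit commutator formula, and apply Theorem~\ref{thm:Ottaviani} to the displayed decomposition of $\mu_\beta$ to get $\brank \geq 5$. (You slice along the $U$-factor while the paper slices along $W$; the theorem explicitly permits either.)

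Where you diverge is in the upper bound $\brank(\mu_\beta) \leq 5$. You propose building an explicit $\varepsilon$-degeneration to rank-$5$ tensors, which you do not actually carry out. The paper does not carry it out either --- it simply states the corollary after invoking Theorem~\ref{thm:Ottaviani}. The intended argument is almost certainly the classical fact that \emph{every} tensor in $\mathbb{C}^3 \otimes \mathbb{C}^3 \otimes \mathbb{C}^3$ has border rank at most $5$: the fifth secant variety of the Segre $\mathbb{P}^2 \times \mathbb{P}^2 \times \mathbb{P}^2$ fills $\mathbb{P}^{26}$ (the fourth is the well-known defective case). This is in the references \cite{Strass,Landsberg} already cited around Theorem~\ref{thm:Ottaviani}, and it makes the upper bound automatic once $\mu_\beta$ sits in $3\times 3\times 3$ format. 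Your explicit-limit approach would work in principle but is unnecessarily laborious, and as written it remains only a sketch; the paper's implicit route is a one-line citation.
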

In other words, for $A,X \in \mathbb{C}^{2 \times 2}$, computing $AX$ requires at least seven multiplications (e.g., Strassen's algorithm) whereas computing $[A,X] = AX - XA$ requires at most six multiplications. We suspect that this is always the case, i.e., computing commutator is always faster than computing matrix multiplication for $n \times n$ matrices.

We now construct an algebra $\mathcal{A}$ that realizes $\beta$ and therefore $[\cdot,\cdot]$. Let
\[
\mathcal{A} = \mathbb{C}\langle x_1,x_2 \rangle/(x_1^2,\;x_2^2,\; x_1x_2+x_2x_1)
\] 
and consider the embedding $U\otimes V \to \mathcal{A} \otimes \mathcal{A}$ induced by
\[
u_1 \mapsto x_1, \quad u_2 \mapsto x_2, \quad u_3 \mapsto 1,\quad
v_1 \mapsto -1, \quad v_2 \mapsto  x_2, \quad v_3 \mapsto -x_1.
\]
Given $s=\sum_{i=1}^{3} s_i u_i \in U$ and $t=\sum_{i=1}^3 t_i v_i \in V$, the images  $\widehat{s}$ and $\widehat{t}$ in $\mathcal{A}$ are 
\[
\widehat{s}=-s_1x_1 - s_2 x_2 + s_3 1,\qquad
\widehat{t}= -t_1 1 + t_2 x_2 - t_3 x_1
\]
respectively. Their product is 
\begin{align*}
\widehat{s} \cdot \widehat{t}&=(s_1x_1 + s_2 x_2 + s_3 1)(-t_1 1 + t_2 x_2 - t_3 x_1)\\
&=(-s_1t_1 -s_3t_3) x_1 + (s_1t_2+s_2t_3)x_1x_2 + (-s_2t_1+s_3t_2)x_2 + (-s_3t_1) 1,
\end{align*}
i.e., $\mathcal{A}$ realizes the bilinear map $[\cdot,\cdot]$. Observe that  $\mathcal{A}$ is the same algebra that we used to realize the $3\times 3$ skew-symmetric matrix-vector product in Section~\ref{sec:skew-symmetric}.

\section{Simultaneous matrix multiplication}\label{sec:simul}

We round out our list of bilinear operations with two examples of \textit{simultaneous matrix product}.
\begin{proposition}\label{prop:simultaneous matrix multiplication}
The following two matrix-matrix products:
\begin{equation}\label{eq:sim1}
\begin{bmatrix}
a & b\\
c & d
\end{bmatrix} \begin{bmatrix}
e & f\\
g & h
\end{bmatrix}
\qquad \text{and} \qquad \begin{bmatrix}
a & b\\
c & d
\end{bmatrix} \begin{bmatrix}
g & h\\
e & f
\end{bmatrix}
\end{equation}
can be computed simultaneously with eight multiplications.
\end{proposition}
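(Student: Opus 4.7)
The plan is to exploit the fact that the second matrix $B' = \bigl[\begin{smallmatrix} g & h \\ e & f \end{smallmatrix}\bigr]$ is obtained from $B = \bigl[\begin{smallmatrix} e & f \\ g & h \end{smallmatrix}\bigr]$ by swapping its two rows. In particular, $B + B'$ and $B - B'$ are both rank-one matrices:
\[
B + B' = \begin{bmatrix} 1 \\ 1 \end{bmatrix}\begin{bmatrix} e+g & f+h \end{bmatrix}, \qquad B - B' = \begin{bmatrix} 1 \\ -1 \end{bmatrix}\begin{bmatrix} e-g & f-h \end{bmatrix}.
\]
Writing $B = \tfrac{1}{2}(B+B') + \tfrac{1}{2}(B-B')$ and $B' = \tfrac{1}{2}(B+B') - \tfrac{1}{2}(B-B')$, the task of computing the pair $(AB, AB')$ reduces to computing the pair $(A(B+B'),\, A(B-B'))$, after which we recover $AB$ and $AB'$ by taking the half-sum and half-difference. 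Since forming these sums/differences and the final recovery involve only additions and multiplications by the constant $\tfrac{1}{2}$, they incur no cost in bilinear complexity.

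Next I would compute the two products with a rank-one matrix. Multiplying $A$ by the outer product $B + B'$ reduces to forming the outer product $A\bigl[\begin{smallmatrix}1\\1\end{smallmatrix}\bigr] \cdot [e+g,\ f+h] = \bigl[\begin{smallmatrix}a+b\\c+d\end{smallmatrix}\bigr][e+g,\ f+h]$, costing the four multiplications
\[
(a+b)(e+g),\quad (a+b)(f+h),\quad (c+d)(e+g),\quad (c+d)(f+h).
\]
Similarly $A(B-B') = \bigl[\begin{smallmatrix}a-b\\c-d\end{smallmatrix}\bigr][e-g,\ f-h]$ is computed with four more multiplications
\[
(a-b)(e-g),\quad (a-b)(f-h),\quad (c-d)(e-g),\quad (c-d)(f-h),
\]
for a total of eight.

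Finally I would verify correctness entry by entry; for instance the $(1,1)$ entry of $\tfrac{1}{2}(A(B+B')+A(B-B'))$ is $\tfrac{1}{2}[(a+b)(e+g)+(a-b)(e-g)] = ae+bg$, which matches the $(1,1)$ entry of $AB$, and analogously the $(1,1)$ entry of $\tfrac{1}{2}(A(B+B')-A(B-B'))$ equals $ag+be$, matching $(AB')_{11}$; the remaining five entries are checked identically. There is no real obstacle here: the construction is essentially forced once one notices the row-swap relation between $B$ and $B'$, and the argument can be viewed as a simple instance of the generalized Cohn--Umans framework \eqref{eq:gentpp1}, embedding $\mathbb{C}^{2\times 2}\otimes \mathbb{C}^{2\times 2}$ into itself via $B \mapsto B+B'$ and $B \mapsto B-B'$ to exploit the rank-one structure of the images.
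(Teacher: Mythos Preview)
Your proof is correct and takes a genuinely different route from the paper's. The paper realizes the simultaneous product via the Cohn--Umans method with the dihedral group $D_4$: it embeds $A,B$ into $\mathbb{C}[D_4]$, observes that the eight coefficients of $\widehat{A}\cdot\widehat{B}$ are exactly the eight entries of $AB$ and $AB^{\mathsf f}$, and then uses Wedderburn's decomposition $\mathbb{C}[D_4]\simeq \mathbb{C}^4\oplus\mathbb{C}^{2\times 2}$ to count multiplications (the image of $\widehat{A}$ in the $2\times 2$ block turns out to be diagonal, so the total is $4+4=8$). Your argument is more elementary: you notice directly that $B\pm B^{\mathsf f}$ have rank one, so each of $A(B\pm B^{\mathsf f})$ is a $2\times 1$ by $1\times 2$ outer product costing four bilinear multiplications, and the pair $(AB,AB^{\mathsf f})$ is recovered by linear combinations. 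The paper's route illustrates its theme---that the ``junk terms'' in a group-algebra product can themselves be useful---and yields Corollary~\ref{cor:abbf} in the same breath; your route is shorter, needs no representation theory, and makes the rank-one structure explicit. (Two small quibbles: after checking two entries there are six remaining, not five; and the closing remark that this is an instance of \eqref{eq:gentpp1} is rather loose---you are really just performing a linear change of variables on $B$, not embedding into a larger bilinear problem.)
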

\begin{proof}
Let $D_4=\langle x,y\mid x^4=y^2=1,\; yxy=x^{-1}\rangle$ be the dihedral group of order eight. The multiplication of $2\times 2$ matrices is realized by the subsets
\[
H_1  =\langle y \rangle =\lbrace y,1\rbrace, \qquad
H_2  =\langle x^2y \rangle=\lbrace x^2y,1 \rbrace, \qquad
S_3 =\lbrace x^{-1}y, 1\rbrace.
\]
Let
\begin{equation}\label{eq:ab}
A=\begin{bmatrix}
a & b \\
c& d
\end{bmatrix}, \qquad B=\begin{bmatrix}
e & f\\
g & h
\end{bmatrix}.
\end{equation}
Then $A,B$ correspond to $\widehat{A},\widehat{B} \in \mathbb{C}\left[ D_4\right]$ where 
\begin{align*}
\widehat{A}&=a\cdot (y^{-1}x^2y)+ b \cdot (y^{-1})+c \cdot (x^2 y)+d\cdot (1)\\
&=a \cdot (x^2)+ b \cdot (y) +c \cdot (x^2 y)+d \cdot (1),\\
\widehat{B}&=e\cdot ((x^2y)^{-1} x^{-1}y)+ f \cdot ((x^2 y)^{-1} )+ g\cdot (x^{-1} y)+ h\cdot (1)\\
&=e \cdot (x^3) + f\cdot (x^2 y) + g\cdot (x^3y)+ h\cdot (1).
\end{align*}
We compute the product $\widehat{A} \cdot \widehat{B}$ in $\mathbb{C}\left[ D_4\right]$,
\begin{multline*}
\widehat{A}\cdot\widehat{B}=(ae+bg)\cdot (x)+ (af+bh) \cdot (y) + (ce+dg)\cdot (x^3y)+ (cf+dh)\cdot (1)\\
 +(ag+be) \cdot (xy) + (ah+bf) \cdot (x^2) + (cg+de)\cdot (x^3)+(ch+df) \cdot (x^2y)
\end{multline*}
and observe that the first four terms and last four terms are precisely the entries of
\[
M_1\coloneqq\begin{bmatrix}
a & b\\
c & d
\end{bmatrix} \begin{bmatrix}
e & f\\
g & h
\end{bmatrix}
\qquad \text{and} \qquad
M_2\coloneqq\begin{bmatrix}
a & b\\
c & d
\end{bmatrix} \begin{bmatrix}
g & h\\
e & f
\end{bmatrix}
\]
respectively.
In other words, we can calculate $M_1$ and $M_2$ simultaneously by calculating $\widehat{A} \cdot \widehat{B}$.
On the other hand, $D_4$ has four irreducible representations of dimension one and one of dimension two:
\begin{enumerate}[\upshape (i)]
\item trivial: $(1,x,x^2,x^3,y,xy,x^2y,x^3y)\overset{\rho}{\mapsto} (1,1,1,1,1,1,1,1)$
\item sign type $1$: $(1,x,x^2,x^3,y,xy,x^2y,x^3y)\overset{\rho}{\mapsto} (1,1,1,1,-1,-1,-1,-1)$
\item sign type $2$: $(1,x,x^2,x^3,y,xy,x^2y,x^3y)\overset{\rho}{\mapsto} (1,-1,1,-1,1,-1,1,-1)$
\item sign type $3$: $(1,x,x^2,x^3,y,xy,x^2y,x^3y)\overset{\rho}{\mapsto} (1,-1,1,-1,-1,1,-1,1)$
\item two-dimensional:
\[ 
\begin{aligned}
1&\mapsto \begin{bmatrix}
1 & 0\\
0 & 1
\end{bmatrix}, \quad &
x&\mapsto \begin{bmatrix}
0 & -1\\
1 & 0
\end{bmatrix}, \quad &
x^2&\mapsto \begin{bmatrix}
-1 & 0\\
0 & -1
\end{bmatrix}, \quad &
x^3&\mapsto \begin{bmatrix}
0 & 1\\
-1 & 0
\end{bmatrix},\\
y&\mapsto \begin{bmatrix}
1 & 0\\
0 & -1
\end{bmatrix}, \quad &
xy&\mapsto \begin{bmatrix}
0 & 1\\
1 & 0
\end{bmatrix}, \quad &
x^2y&\mapsto \begin{bmatrix}
-1 & 0\\
0 & 1
\end{bmatrix}, \quad &
x^3y&\mapsto \begin{bmatrix}
0 & -1\\
-1 & 0
\end{bmatrix}.
\end{aligned}
\]
\end{enumerate}
By Wedderburn Theorem we have
\[
\mathbb{C}\left[D_4\right]\simeq \mathbb{C}\oplus \mathbb{C}\oplus \mathbb{C}\oplus \mathbb{C}\oplus \mathbb{C}^2\otimes \mathbb{C}^2,
\]
where the first four $\mathbb{C}$'s correspond to the four $1$-dimensional representations and the $\mathbb{C}^2\otimes \mathbb{C}^2$ corresponds to the $2$-dimensional representation. Under this isomorphism, we may identify $\widehat{A}$ and $\widehat{B}$ as $6\times 6$ block diagonal matrices,
{\footnotesize
\[
\widehat{A}=\begin{bmatrix}
a+b+c+d & 0 &0 &0 &0 &0\\
0 & a-b-c+d & 0 &0 &0 &0 \\
0 & 0 & a+b+c+d & 0 & 0 & 0\\
0 &0 &0 & a-b-c+d & 0& 0\\
0 &0 & 0 &0 &-a+b-c+d & 0\\
0  &0 &0 &0 &0 & -a-b+c+d   
\end{bmatrix}
\]}%
and
{\footnotesize
\[
\widehat{B}=\begin{bmatrix}
e+f+g+h & 0 &0 &0 &0 &0\\
0 & e-f-g+h & 0 &0 &0 &0 \\
0 & 0 & -e+f-g+h & 0 & 0 & 0\\
0 &0 &0 & -e-f+g+h & 0& 0\\
0 &0 & 0 &0 & -f+h & e-g\\
0  &0 &0 &0 &-e-g & f+h    
\end{bmatrix}.
\]}%
Hence the computation of $\widehat{A}\cdot \widehat{B}$ costs eight multiplications.
\end{proof}
\begin{corollary}
Suppose
\[
\begin{bmatrix}
a & b\\
c & d
\end{bmatrix} \begin{bmatrix}
g & h\\
e & f
\end{bmatrix}=0.
\]
Then the product
\[
\begin{bmatrix}
a & b\\
c & d
\end{bmatrix} \begin{bmatrix}
e & f\\
g & h
\end{bmatrix}
\]
can be computed with four multiplications. 
\end{corollary}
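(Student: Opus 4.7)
The plan is to exploit the constraint $M_2 = 0$ to reduce the target computation $M_1 = AB$ to an outer product. Writing $\tilde{B} \coloneqq \begin{bmatrix} g & h \\ e & f \end{bmatrix}$, the hypothesis says $A\tilde{B} = 0$, so
\[
M_1 \;=\; AB \;=\; AB - A\tilde{B} \;=\; A(B - \tilde{B}).
\]
This identity is the whole idea of the proof; the work is to observe that $B - \tilde{B}$ has very special structure.

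First I would compute $B - \tilde{B}$ directly. Its two rows are negatives of each other, so it factors as a rank-one matrix
\[
B - \tilde{B} \;=\; \begin{bmatrix} e - g & f - h \\ g - e & h - f \end{bmatrix} \;=\; \begin{bmatrix} \phantom{-}1 \\ -1 \end{bmatrix}\begin{bmatrix} e - g & f - h \end{bmatrix}.
\]
Substituting back,
\[
M_1 \;=\; A \begin{bmatrix} \phantom{-}1 \\ -1 \end{bmatrix}\begin{bmatrix} e - g & f - h \end{bmatrix} \;=\; \begin{bmatrix} a - b \\ c - d \end{bmatrix}\begin{bmatrix} e - g & f - h \end{bmatrix}.
\]
Forming $a - b$, $c - d$, $e - g$, $f - h$ involves only additions and so is free in the bilinear complexity model. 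What remains is the outer product of two $2$-vectors, which requires exactly four multiplications, namely $(a-b)(e-g)$, $(a-b)(f-h)$, $(c-d)(e-g)$, $(c-d)(f-h)$.

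As a sanity check one can expand, e.g., $(a-b)(e-g) = ae + bg - (ag + be)$, which equals the $(1,1)$ entry $ae + bg$ of $M_1$ precisely because the $(1,1)$ entry $ag + be$ of $M_2$ vanishes; the other three entries match in the same way. There is no real obstacle here — the only step that requires any insight is spotting that swapping the rows of $B$ produces a matrix whose difference with $B$ has rank one, after which the count of four multiplications is forced.
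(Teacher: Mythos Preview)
Your argument is correct, and it is genuinely different from the paper's. The paper proves the corollary by invoking the Wedderburn decomposition of $\mathbb{C}[D_4]$ established in the preceding proposition: once $M_2=0$, the four coefficients of $\widehat{A}\cdot\widehat{B}$ corresponding to $M_2$ vanish, and the entries of $M_1$ can be recovered from the four one-dimensional representations alone, i.e., from the four scalar products $(a\pm b\pm c+d)(\pm e\pm f\pm g+h)$ on the first four diagonal entries. Your route bypasses the group algebra entirely: you observe that $B-\tilde{B}$ has rank one and reduce $M_1$ to the outer product $\begin{bmatrix} a-b\\ c-d\end{bmatrix}\begin{bmatrix} e-g & f-h\end{bmatrix}$. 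The two proofs even produce \emph{different} four-multiplication algorithms. The paper's version is a natural payoff of the Cohn--Umans framework it has been developing; yours is more elementary, self-contained, and makes transparent \emph{why} four multiplications suffice (the constraint forces $AB$ to factor through a rank-one matrix).
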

\begin{proof}
If the given condition holds, one may obtain the required product from the first four diagonal entries of $\widehat{A} \cdot \widehat{B}$, which costs four multiplications.
\end{proof}

We restate Proposition~\ref{prop:simultaneous matrix multiplication} in terms of the structure tensor $\mu_\mathsf{f}$ of the bilinear map 
\[
\beta_\mathsf{f}: \mathbb{C}^{2 \times 2} \times \mathbb{C}^{2 \times 2} \to \mathbb{C}^{2 \times 2}\oplus\mathbb{C}^{2 \times 2}, \quad (A,B)\mapsto (AB,AB^{\mathsf{f}}),
\]
where $B^{\mathsf{f}}$ denotes the operation of switching the first and second row of $B$. Note that
\[
\mu_\mathsf{f} \in (\mathbb{C}^{2 \times 2})^*\otimes (\mathbb{C}^{2 \times 2})^*\otimes (\mathbb{C}^{2 \times 2}\oplus \mathbb{C}^{2 \times 2})\simeq \mathbb{C}^4\otimes \mathbb{C}^4\otimes \mathbb{C}^8.
\]
\begin{proposition}\label{prop:rankf}
The rank and border rank of the structure tensor for the simultaneous matrix-matrix product in \eqref{eq:sim1} are given by
\[
\rank(\mu_\mathsf{f})=\brank(\mu_\mathsf{f})=8.
\]
\end{proposition}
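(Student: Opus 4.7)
The strategy is to establish the upper and lower bounds for the rank separately, and then upgrade the rank equality to a border rank equality via Proposition~\ref{prop:border rank equals rank}.

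The upper bound $\rank(\mu_\mathsf{f}) \le 8$ is immediate: Proposition~\ref{prop:simultaneous matrix multiplication} furnishes an explicit algorithm, built from the Wedderburn decomposition of $\mathbb{C}[D_4]$, that computes both products in \eqref{eq:sim1} with eight multiplications of indeterminates. Read through the lens of Proposition~\ref{prop:rank multiplication}, this algorithm yields an eight-term rank decomposition of $\mu_\mathsf{f}$.

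For the lower bound, I plan to invoke Proposition~\ref{prop:rank lower bound} applied to the codomain $W = \mathbb{C}^{2\times 2} \oplus \mathbb{C}^{2\times 2}$, which has $\dim W = 8$. The only thing to verify is that $\operatorname{span}\bigl(\mu_\mathsf{f}(\mathbb{C}^{2\times 2} \otimes \mathbb{C}^{2\times 2})\bigr) = W$. The cleanest way is to exhibit elements of the image whose linear span is all of $W$. Taking $A = E_{11}$ and letting $B$ range over $\mathbb{C}^{2\times 2}$ gives all pairs of the form $(E_{11}B,\; E_{11}B^{\mathsf{f}})$, whose two components have arbitrary independent first rows and zero second rows; taking $A = E_{21}$ similarly gives arbitrary independent second rows with zero first rows. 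These two families together span the full eight-dimensional space $W$, so $\rank(\mu_\mathsf{f}) \ge 8$, matching the upper bound.

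For the border rank, I would apply Proposition~\ref{prop:border rank equals rank} directly. Its hypotheses are all met: the image of $\mu_\mathsf{f}$ spans $W$ (just shown), $\rank(\mu_\mathsf{f}) = 8 = \dim W$, and $\dim W = 8 \le 16 = \dim U \cdot \dim V$. The proposition then yields $\brank(\mu_\mathsf{f}) = \rank(\mu_\mathsf{f}) = 8$. There is no genuine obstacle here; the only step requiring care is the surjectivity check, where one must avoid the natural but erroneous guess of using only $A = I$ and $A = \bigl[\begin{smallmatrix}0&1\\1&0\end{smallmatrix}\bigr]$ (which spans only a six-dimensional subspace), and instead use the rank-one matrices $E_{11}, E_{21}$ as above.
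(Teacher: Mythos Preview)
Your proof is correct and follows precisely the paper's own approach: the upper bound comes from Proposition~\ref{prop:simultaneous matrix multiplication}, the lower bound from Proposition~\ref{prop:rank lower bound} applied to the eight-dimensional codomain, and the border rank conclusion from Proposition~\ref{prop:border rank equals rank}. Your surjectivity verification via $A=E_{11}$ and $A=E_{21}$ is more explicit than the paper's, which simply asserts the span is the whole codomain.

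One small inaccuracy in your final parenthetical remark: since $B^{\mathsf{f}}=JB$ with $J=\bigl[\begin{smallmatrix}0&1\\1&0\end{smallmatrix}\bigr]$, the images for $A=I$ and $A=J$ are $\{(B,JB)\}$ and $\{(JB,B)\}$ respectively, and these coincide (substitute $B\mapsto JB$), so together they span only a four-dimensional subspace, not six. This does not affect your argument, which rests on $E_{11}$ and $E_{21}$.
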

\begin{proof}
It is easy to verify that $\operatorname{span}(\mu_\mathsf{f}(\mathbb{C}^{2 \times 2}\otimes \mathbb{C}^{2 \times 2}))=\mathbb{C}^{2 \times 2}$. Hence the required result follows from Propositions~\ref{prop:rank lower bound}, \ref{prop:border rank equals rank}, and \ref{prop:simultaneous matrix multiplication}.
\end{proof}
\begin{corollary}\label{cor:abbf}
Consider the matrices
\begin{equation}\label{eq:abbf}
A=\begin{bmatrix}
a & b\\
c & d
\end{bmatrix} \in \mathbb{C}^{2 \times 2}, \quad B= \begin{bmatrix}
e_1 & e_2 & \cdots & e_{2n}\\
f_1 & f_2 & \cdots & f_{2n} \end{bmatrix} \in \change{\mathbb{C}^{2 \times 2n}}, \quad
B^\mathsf{f}
= \begin{bmatrix}
f_1 & f_2 & \cdots & f_{2n}\\
e_1 & e_2 & \cdots & e_{2n} \end{bmatrix} \in \change{\mathbb{C}^{2 \times 2n}},
\end{equation}
where $n$ is any positive integer. Then $AB$ and $AB^{\mathsf{f}}$ can be computed simultaneously with $8n$ multiplications.
\end{corollary}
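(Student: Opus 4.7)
The plan is to reduce the statement to $n$ independent applications of Proposition~\ref{prop:simultaneous matrix multiplication}. First I would partition the columns of $B$ (and correspondingly of $B^{\mathsf{f}}$) into $n$ consecutive $2 \times 2$ blocks $B_1, \ldots, B_n$, where
\[
B_i = \begin{bmatrix} e_{2i-1} & e_{2i} \\ f_{2i-1} & f_{2i} \end{bmatrix} \in \mathbb{C}^{2 \times 2}, \qquad i = 1, \ldots, n.
\]
A key observation is that the row-swap operation $\mathsf{f}$ commutes with horizontal concatenation of blocks: since $B^{\mathsf{f}}$ simply swaps the two rows of the full matrix $B$, its $i$th block is exactly $B_i^{\mathsf{f}}$, the row-swap of $B_i$. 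Consequently,
\[
AB = \begin{bmatrix} AB_1 & AB_2 & \cdots & AB_n \end{bmatrix}, \qquad AB^{\mathsf{f}} = \begin{bmatrix} AB_1^{\mathsf{f}} & AB_2^{\mathsf{f}} & \cdots & AB_n^{\mathsf{f}} \end{bmatrix}.
\]

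Next I would apply Proposition~\ref{prop:simultaneous matrix multiplication} to each pair $(AB_i, AB_i^{\mathsf{f}})$. That proposition yields an algorithm computing both products simultaneously using exactly $8$ multiplications of indeterminate quantities in the entries of $A$ and $B_i$. Summing over $i = 1, \ldots, n$ gives the desired bound of $8n$ multiplications in total.

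There is essentially no obstacle here: the entries of $A$ are shared across all $n$ invocations, but this does not affect the count in the sense of bilinear complexity, since each invocation performs eight scalar multiplications, each involving an entry of $A$ and an entry of $B_i$ (or linear combinations thereof), and these are genuine multiplications of independent indeterminates in each block. The only point one needs to verify carefully is that the eight-multiplication scheme supplied by Proposition~\ref{prop:simultaneous matrix multiplication} is bilinear in $(A, B_i)$ --- which is immediate from the construction via $\mathbb{C}[D_4]$ in its proof --- so no cross-block interaction arises and the counts add.
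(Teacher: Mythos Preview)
Your proof is correct and is essentially the same approach as the paper's: the paper phrases the argument as realizing the bilinear map $(A,B)\mapsto (AB,AB^{\mathsf{f}})$ by the algebra $\mathbb{C}[D_4]\times \cdots \times \mathbb{C}[D_4]$ ($n$ copies), which amounts precisely to your block-by-block application of Proposition~\ref{prop:simultaneous matrix multiplication}.
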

\begin{proof}
We may realize the bilinear map
\[
\mathbb{C}^{2 \times 2}\times \mathbb{C}^{2\times 2n}\to \mathbb{C}^{2\times 2n}\oplus \mathbb{C}^{2\times 2n}, \quad
(A,B)\mapsto (AB,AB^{\mathsf{f}})
\]
by the algebra $\mathbb{C}[D_4]\times \cdots \times \mathbb{C}[D_4]$ ($n$ copies).
\end{proof}

Suppose we are instead interested in computing 
\[
\begin{bmatrix}
a & b \\
c & d
\end{bmatrix}\begin{bmatrix}
e & f \\
g & h
\end{bmatrix}\qquad \text{and} \qquad\begin{bmatrix}
a & b \\
c & d
\end{bmatrix}\begin{bmatrix}
h & g \\
e & f
\end{bmatrix}
\]
simultaneously. We start by realizing $2 \times 2$ matrix product by the algebra $\mathbb{C}[x]/(x^8-1)$. Let $A,B \in \mathbb{C}^{2 \times 2}$ be as in \eqref{eq:ab}. Consider the embedding
\begin{align*}
j : \mathbb{C}^{2 \times 2}\otimes \mathbb{C}^{2 \times 2} &\to \mathbb{C}[x]/(x^8-1)\otimes \mathbb{C}[x]/(x^8-1),\\
(A,B) &\mapsto (ax^3 + cx^2 + bx + d, gx^6 + ex^4 + hx^2 + f),
\end{align*}
and the projection
\[
\proj:\mathbb{C}[x]/(x^8-1)\to \mathbb{C}^{2 \times 2}, \qquad
\sum_{i=1}^7 u_i x^i \mapsto\begin{bmatrix}
u_7 & u_3\\
u_6 & u_2
\end{bmatrix}.
\]
We may verify that for these choices, the diagram in \eqref{eq:gentpp0} commutes. The product
\[
(ax^3 + cx^2 + bx + d)(gx^6 + ex^4 + hx^2 + f) 
\]
in $\mathbb{C}[x]/(x^8-1)$ gives us the following counterpart of Proposition~\ref{prop:simultaneous matrix multiplication}.
\begin{proposition}\label{prop:simultaneous multiplication 2}
The following two matrix-matrix products:
\begin{equation}\label{eq:sim2}
\begin{bmatrix}
a & b\\
c & d
\end{bmatrix} \begin{bmatrix}
e & f\\
g & h
\end{bmatrix}
\qquad \text{and} \qquad \begin{bmatrix}
a & b \\
c & d
\end{bmatrix}\begin{bmatrix}
h & g \\
e & f
\end{bmatrix}
\end{equation}
can be computed simultaneously with eight multiplications.
\end{proposition}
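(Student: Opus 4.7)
The setup for this proof is essentially provided before the statement: we have the algebra $\mathcal{A} = \mathbb{C}[x]/(x^8-1)$, the embedding $j$, and the projection $\proj$. My plan is therefore to (i) verify that the commutative diagram \eqref{eq:gentpp0} holds for these choices, so that both matrix products in \eqref{eq:sim2} can be read off from a single product in $\mathcal{A}$, and then (ii) invoke the fact that this product in $\mathcal{A}$ requires only $8$ multiplications.

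For step (i), I would simply expand the product $p(x) \cdot q(x) = (ax^3 + cx^2 + bx + d)(gx^6 + ex^4 + hx^2 + f)$ in $\mathbb{C}[x]/(x^8-1)$, collecting like powers of $x$ modulo $x^8=1$. A direct computation yields the coefficients
\[
\begin{aligned}
u_0 &= cg + df, & u_1 &= ag + bf, & u_2 &= cf + dh, & u_3 &= af + bh,\\
u_4 &= ch + de, & u_5 &= ah + be, & u_6 &= ce + dg, & u_7 &= ae + bg.
\end{aligned}
\]
Applying $\proj$ picks out $\begin{bmatrix} u_7 & u_3\\ u_6 & u_2 \end{bmatrix}$, which is precisely the first matrix product $\bigl[\begin{smallmatrix}a&b\\c&d\end{smallmatrix}\bigr]\bigl[\begin{smallmatrix}e&f\\g&h\end{smallmatrix}\bigr]$. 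The remaining `junk' coefficients assemble into $\begin{bmatrix} u_5 & u_1\\ u_4 & u_0 \end{bmatrix} = \bigl[\begin{smallmatrix}ah+be & ag+bf\\ ch+de & cg+df\end{smallmatrix}\bigr]$, which is the second matrix product $\bigl[\begin{smallmatrix}a&b\\c&d\end{smallmatrix}\bigr]\bigl[\begin{smallmatrix}h&g\\e&f\end{smallmatrix}\bigr]$. Thus both products in \eqref{eq:sim2} are obtained simultaneously from the single product $p(x) \cdot q(x)$ in $\mathcal{A}$, and the analogue of the commutative diagram \eqref{eq:gentpp0} (with a direct-sum codomain in place of $W$, as in \eqref{eq:gentpp1}) holds.

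For step (ii), since $x^8 - 1$ factors over $\mathbb{C}$ into $8$ distinct linear factors, by the Chinese Remainder Theorem (or equivalently Wedderburn applied to $\mathbb{C}[C_8]$)
\[
\mathbb{C}[x]/(x^8 - 1) \simeq \mathbb{C}^8,
\]
so the structure tensor of $\mathcal{A}$ has rank $8$ (this is also the $q=n=8$ case of Winograd's Theorem~\ref{thm:Winograd}). Equivalently, evaluating at the eighth roots of unity diagonalizes multiplication in $\mathcal{A}$, so $p(x) \cdot q(x)$ can be computed using only $8$ multiplications (DFTs contribute no multiplicative cost by the discussion in Section~\ref{sec:arith}). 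Combining with step (i), the simultaneous computation of the two products in \eqref{eq:sim2} requires at most $8$ multiplications. The only modest obstacle is the bookkeeping in step (i) --- one must choose the embedding $j$ so that the $8$ products $a_ib_j$ arising from the $4+4$ embedded monomials land in $8$ distinct residue classes modulo $x^8$, which the given choice of exponents $\{3,2,1,0\}$ and $\{6,4,2,0\}$ does precisely because these sum-sets cover $\{0,1,\dots,7\}$ (mod $8$) bijectively in the required pattern.
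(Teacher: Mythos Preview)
Your proof is correct and follows exactly the approach the paper sketches in the paragraph preceding the proposition: you carry out the explicit expansion of $(ax^3 + cx^2 + bx + d)(gx^6 + ex^4 + hx^2 + f)$ in $\mathbb{C}[x]/(x^8-1)$ that the paper leaves to the reader under ``We may verify that for these choices, the diagram in \eqref{eq:gentpp0} commutes,'' and then appeal to the same isomorphism $\mathbb{C}[x]/(x^8-1)\simeq\mathbb{C}^8$ to conclude that the product costs eight multiplications. The computation of the coefficients $u_0,\dots,u_7$ and their identification with the entries of the two matrix products is accurate.
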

Again, we may restate Proposition~\ref{prop:simultaneous multiplication 2} in terms of the structure tensor $\mu_\mathsf{g}$ of the bilinear map 
\[
\beta_\mathsf{g}:\mathbb{C}^{2 \times 2}\times \mathbb{C}^{2 \times 2} \to \mathbb{C}^{2 \times 2}\oplus \mathbb{C}^{2 \times 2},\quad
(A,B)\mapsto (AB,AB^{\mathsf{g}})
\]
where $B^{\mathsf{g}}$ is the matrix obtained from $B$ by switching the first row and the second row and then switching the first and the second entry in the first row. The following analogue of Proposition~\ref{prop:rankf} follows from Propositions~\ref{prop:rank lower bound}, \ref{prop:border rank equals rank}, and \ref{prop:simultaneous multiplication 2}.
\begin{proposition}
The rank and border rank of the structure tensor for the simultaneous matrix-matrix product in \eqref{eq:sim2} are given by
\[
\rank(\mu_\mathsf{g})=\brank(\mu_\mathsf{g})=8.
\] 
\end{proposition}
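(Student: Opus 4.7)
The plan is to mirror the proof of Proposition~\ref{prop:rankf} essentially verbatim, using the three results the author cites. First, Proposition~\ref{prop:simultaneous multiplication 2} gives the upper bound $\rank(\mu_\mathsf{g}) \le 8$, and hence $\brank(\mu_\mathsf{g}) \le \rank(\mu_\mathsf{g}) \le 8$. So the work is in the matching lower bound.

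For the lower bound, I would apply Proposition~\ref{prop:rank lower bound} with $W = \mathbb{C}^{2\times 2} \oplus \mathbb{C}^{2\times 2}$, which has $\dim W = 8$. The sole nontrivial verification is that $\operatorname{span}\bigl(\mu_\mathsf{g}(\mathbb{C}^{2\times 2}\otimes \mathbb{C}^{2\times 2})\bigr) = \mathbb{C}^{2\times 2}\oplus\mathbb{C}^{2\times 2}$. I would exhibit this by choosing particular elementary matrices $A$. Specifically, taking $A = E_{11}$ and letting $B = \bigl[\begin{smallmatrix} e & f\\ g & h\end{smallmatrix}\bigr]$ range freely, $\beta_\mathsf{g}(E_{11},B)$ produces the pair
\[
\left(\begin{bmatrix} e & f\\ 0 & 0\end{bmatrix},\begin{bmatrix} h & g\\ 0 & 0\end{bmatrix}\right),
\]
which as $(e,f,g,h)$ varies sweeps out a $4$-dimensional subspace of $W$ (the coordinates $c_{11},c_{12},d_{11},d_{12}$ are independent). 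Taking $A = E_{21}$ analogously sweeps out the complementary $4$-dimensional subspace governed by $c_{21},c_{22},d_{21},d_{22}$. Together they span all of $W$, so Proposition~\ref{prop:rank lower bound} yields $\rank(\mu_\mathsf{g}) \ge \dim W = 8$, forcing equality $\rank(\mu_\mathsf{g}) = 8$.

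For the border rank, the hypotheses of Proposition~\ref{prop:border rank equals rank} are now all in place: we have just shown $\mu_\mathsf{g}(U\otimes V) = W$, we have $\rank(\mu_\mathsf{g}) = \dim W = 8$, and $\dim W = 8 \le 16 = \dim U \cdot \dim V$. The proposition then gives $\brank(\mu_\mathsf{g}) = \rank(\mu_\mathsf{g}) = 8$.

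There is really no main obstacle here — the construction of an algorithm using eight multiplications has already been done in Proposition~\ref{prop:simultaneous multiplication 2} via the embedding into $\mathbb{C}[x]/(x^8-1)$, and the surjectivity check above is a one-line calculation. The only thing to be slightly careful about is not confusing the two involutions $B^\mathsf{f}$ and $B^\mathsf{g}$ when verifying surjectivity, but this does not affect the argument since the surjectivity witness uses $A = E_{11}, E_{21}$ (which simply selects rows of $B^\mathsf{g}$) and the relevant row of $B^\mathsf{g}$ can be made arbitrary by choosing the entries of $B$ freely.
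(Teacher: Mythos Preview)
Your proposal is correct and follows exactly the approach the paper takes: the paper's proof is a one-line citation of Propositions~\ref{prop:rank lower bound}, \ref{prop:border rank equals rank}, and \ref{prop:simultaneous multiplication 2}, and you have simply spelled out the surjectivity verification that the paper leaves implicit.
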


We also have the following analogue of Corollary~\ref{cor:abbf}.
\begin{corollary}
Let $A \in \mathbb{C}^{2 \times 2}$  and $B\in \mathbb{C}^{2 \times 2n}$ be as in \eqref{eq:abbf}.
Let $B^{\mathsf{g}} \in  \mathbb{C}^{2 \times 2n}$ be the matrix obtained from $B$ by switching the first and second row followed by switching $2i$th and $(2i-1)$th entry in the first row for $i=1,2,\dots,\lfloor n/2\rfloor$. Then $AB$ and $AB^{\mathsf{g}}$ can be computed simultaneously with $8n$ multiplications. 
\end{corollary}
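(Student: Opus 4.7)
The plan is to reduce this $2 \times 2n$ case to $n$ independent instances of the $2 \times 2$ case in Proposition~\ref{prop:simultaneous multiplication 2}, in direct analogy with how Corollary~\ref{cor:abbf} was deduced from Proposition~\ref{prop:simultaneous matrix multiplication}. First I would partition $B$ columnwise into $n$ blocks of size $2 \times 2$, writing $B = [B_1 \mid B_2 \mid \cdots \mid B_n]$ with
\[
B_i = \begin{bmatrix} e_{2i-1} & e_{2i} \\ f_{2i-1} & f_{2i} \end{bmatrix}, \qquad i = 1, \dots, n.
\]
Since matrix multiplication distributes over a column partition of the right factor, $AB = [AB_1 \mid \cdots \mid AB_n]$, and likewise any partitioned output splits into $n$ pieces of size $2 \times 2$.

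The key step is to verify that the blockwise $\mathsf{g}$-twist of each $B_i$ matches the global permutation of entries described in the statement. A direct computation from the definition of the $\mathsf{g}$-operation on a $2 \times 2$ matrix yields
\[
B_i^{\mathsf{g}} = \begin{bmatrix} f_{2i} & f_{2i-1} \\ e_{2i-1} & e_{2i} \end{bmatrix},
\]
so that the concatenation $[B_1^{\mathsf{g}} \mid \cdots \mid B_n^{\mathsf{g}}]$ is the $2 \times 2n$ matrix whose second row equals the original first row $(e_1, e_2, \dots, e_{2n})$ of $B$, and whose first row equals $(f_2, f_1, f_4, f_3, \dots, f_{2n}, f_{2n-1})$, i.e., the original second row of $B$ with its $(2i-1)$th and $2i$th entries swapped for each block index $i$. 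This agrees with the matrix $B^{\mathsf{g}}$ described in the statement, so $AB^{\mathsf{g}}$ is precisely the concatenation $[AB_1^{\mathsf{g}} \mid \cdots \mid AB_n^{\mathsf{g}}]$.

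Given this identification, I apply Proposition~\ref{prop:simultaneous multiplication 2} to each pair $(A, B_i)$ to compute $AB_i$ and $AB_i^{\mathsf{g}}$ simultaneously using $8$ multiplications; summing over $i = 1, \dots, n$ yields the desired $8n$ multiplications. Equivalently, and more in line with the algebraic viewpoint of the preceding results, the bilinear map $(A, B) \mapsto (AB, AB^{\mathsf{g}})$ is realized by the product algebra consisting of $n$ copies of $\mathbb{C}[x]/(x^8 - 1)$, with the embedding and projection from Proposition~\ref{prop:simultaneous multiplication 2} applied coordinatewise. The only potential obstacle is purely combinatorial, namely carefully matching the blockwise $\mathsf{g}$-twists with the global permutation in the statement; there is no further mathematical content beyond Proposition~\ref{prop:simultaneous multiplication 2}.
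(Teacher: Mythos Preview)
Your proposal is correct and follows essentially the same approach as the paper: the paper's one-line proof realizes the bilinear map by the algebra $\mathbb{C}[x_1,\dots,x_n]/(x_i^8-1 \mid i=1,\dots,n)$, which is exactly your product of $n$ copies of $\mathbb{C}[x]/(x^8-1)$ applied blockwise via Proposition~\ref{prop:simultaneous multiplication 2}. Your explicit verification of the block decomposition is more detailed than the paper's terse argument; note only that your computation gives swaps for all $i=1,\dots,n$, whereas the statement's range $\lfloor n/2\rfloor$ appears to be a typo in the paper (the paper's own proof, using $n$ copies of the algebra, tacitly assumes the full range too).
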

\begin{proof}
We may realize the bilinear map 
\[
\mathbb{C}^{2 \times 2}\times \mathbb{C}^{2\times 2n}\to \mathbb{C}^{2\times 2n} \oplus \mathbb{C}^{2\times 2n}, \quad
(A,B)\mapsto (AB, AB^{\mathsf{g}})
\]
by the algebra $\mathbb{C}[x_1,\dots, x_n]/(x_i^8-1\mid i=1,\dots,n)$.
\end{proof}

\section{Conclusion}

The Strassen tensor rank approach gives us a simple way for quantifying bilinear complexity whereas the (generalized) Cohn--Umans approach gives us a constructive way that allows for the rich properties of various algebras to be used in analyzing bilinear complexity. The two methods can be applied hand-in-hand to systematically discover algorithms of optimal bilinear complexity.

\subsection*{Acknowledgment}

We thank Henry Cohn for very helpful discussions that initiated this work. We are also grateful to Andrew Chien, Nikos Pitsianis, and Xiaobai Sun for answering our questions about energy costs and circuit complexity of various integer and floating point operations; to Mike Stein for suggesting that we examine \textsc{bttb} matrices; and to Chris Umans for prompting \change{Construction~\ref{construction}. We thank the two anonymous referees and the handling editor for their exceptionally helpful comments and constructive suggestions. In particular, we included Sections~\ref{sec:overview} and \ref{sec:stability} at the handling editor's urging, which in retrospect were glaring omissions.}

LHL and KY are partially supported by AFOSR FA9550-13-1-0133, DARPA D15AP00109, NSF IIS 1546413, DMS 1209136, and DMS 1057064. In addition, KY's work is also partially supported by NSF CCF 1017760.

\end{document}